\newtheorem{theorem}{Theorem}[section]
\newtheorem{lemma}[theorem]{Lemma}
\newtheorem{cor}[theorem]{Corollary}
\newtheorem{prop}[theorem]{Proposition}
\theoremstyle{definition}
\newtheorem{rem}[theorem]{Remark}
\renewcommand{\showlabelsetlabel}[1]{\raisebox{.5cm}{\kern 1.8cm\hbox{\color{blue}  \normalfont \small \llap{\textmtt{#1}}}}}
\newcommand{\R}{\mathbb{R}}
\newcommand{\C}{\mathbb{C}}
\renewcommand{\l}{\lambda}
\renewcommand{\d}{\delta}
\newcommand{\g}{\gamma}
\newcommand{\e}{\varepsilon}
\newcommand{\s}{\sigma}
\renewcommand{\i}{\textbf{i}}
\newcommand{\lv}{\lvert}
\newcommand{\rv}{\rvert}
\newcommand{\lvv}{\left\lvert}
\newcommand{\rvv}{\right\rvert}
\newcommand{\lb}{\left(}
\newcommand{\rb}{\right)}
\renewcommand{\O}{\mathcal{O}}
\newcommand{\supp}{\operatorname{supp}}
\newcommand{\dopp} [1] {\mathbbmss{#1}}
\renewcommand{\Im}{\operatorname{Im}}
\newcommand{\E}{\mathbb{E}}
\renewcommand{\k}{\kappa}
\newcommand{\N}{\mathbb{N}}
\newcommand{\mc}{\mathcal}
\newcommand{\mb}{\mathbf}
\newcommand{\Span}{\textup{span}}
\newcommand{\n}{{(n)}}
\newcommand{\BBr}{\mathbb{R}}
\newcommand{\dom}{\mbox{dom }}
\newcommand{\inte}{\mbox{Int}}
\begin{document}
    \title{Limit Theorems Under Several Linear Constraints}
    \author{Fabrice Gamboa}
    \address{Institut de Mathématiques de Toulouse, Universit\'e de Toulouse, ANITI Toulouse, and Invited Professor at Facultad de Ingenieria, Universidad de Medellin}
\email{fabrice.gamboa@math.univ-toulouse.fr}
\urladdr{https://www.math.univ-toulouse.fr/~gamboa/} 
    \author{Martin Venker}
    \address{School of Mathematical Sciences, Dublin City University}
\email{martin.venker@dcu.ie }
\urladdr{https://www.dcu.ie/maths/people/martin-venker} 
    \date{\today}
    \begin{abstract}
We study $n$ real-valued random variables subject to several linear constraints. Our main result is a weighted Central Limit Theorem, determining which linear combinations of these random variables are asymptotically normal as $n\to\infty$. Marginal distributions are also studied, showing that in the large $n$ limit random variables under linear constraints become i.i.d.~exponential under a rescaling. Our novel approach is based on a complex de Finetti theorem revealing an underlying independence structure, as well as on entropy arguments.
\end{abstract}
\maketitle  
\tableofcontents
\section{Introduction and quick tour on our results}
The Central Limit Theorem (CLT) is arguably one of the most important results in probability theory. Extending the classical assumptions of identical distributions and independence (i.i.d.) to non-identical distributions while keeping independence as an assumption is standard. Generalizing to dependent (real) random variables is less straightforward and more specific as many different types of dependence can be studied. This paper is concerned with studying the CLT and related results for real-valued random variables $X_1,\dots,X_n$ that are dependent through geometric constraints. As examples of such constraints, the constraint $\sum_{j=1}^n X_j^2\leq1(=1)$, corresponds to the unit ball (sphere), the constraint $\sum_{j=1}^n X_j=1$, $X_j\geq0$ corresponds to the standard simplex, and the $l_p$ balls (spheres) are built through the constraints $\sum_{j=1}^n\lv X_j\rv^p\leq1(=1)$. These examples are classical, yet have been widely studied in a variety of settings as we will outline below. Typically, these cases are studied using explicit representations in terms of independent random variables, e.g.~the uniform distribution on the unit sphere can be realized by setting $X_j:=Y_j/\|Y\|$, where $Y_1,\dots,Y_n$ are i.i.d.~standard normal and $\|\cdot\|$ denotes the Euclidean norm. The simplex case can be realized by normalizing i.i.d.~exponential random variables.

The aim of this paper is to develop a framework for dealing with random variables subjected to multiple constraints, focusing on several linear constraints
\begin{align}
	\sum_{j=1}^nA_{1j}X_j=b_1,\dots, \sum_{j=1}^nA_{mj}X_j=b_m, X_j\geq0,
\end{align}
for given constants $A_{ij}, b_i$, $1\leq i\leq m, 1\leq j\leq n$. It generalizes the simplex case $m=1$, $A_{1j}=1$, and offers an ideal testing ground to explore the effects of several simultaneous constraints. To mention a few of these effects that we will see in this paper, several constraints do not only induce independence but also lead to non-identical distributions of the $X_j$. More crucially, normalizing i.i.d.~random variables does not suffice in the context of several constraints, so a new approach is needed. And finally, the interplay between the different constraints is a completely new and interesting feature of this model. \\

Let us now formalize the setting. We will study vectors chosen at random from convex polytopes
$(K_n)_{n\geq1}$, where $K_n \subset \R^n$ is of the form
\begin{align}
	K_n:=\{x\in\R^n_{\geq0}:A^{(n)}x= b^{(n)}\}.\label{polytope}
\end{align}
Here, we set $\R^n_{\geq0}:=\{x\in\R^n:x\geq0\}$ to denote the non-negative orthant while for each $n$, $A^{(n)}$
is an $m\times n$ matrix with real entries and $b^{(n)}\in\R^m$. 
We assume, without loss of generality, that the situation is non-degenerate in the sense that $A^\n$ has full rank $m$ and that the relative interior of $K_n$ is non-empty.  In our work,  $m$ is considered as fixed while $n>m$ is growing. 
Furthermore, we assume that $K_n$ is compact. As we will see in Lemma \ref{lemma-positivity} below, the compactness is equivalent to us being able to choose $A^\n$ and $b^\n$ in \eqref{polytope} with all entries strictly positive. We equip $K_n$ with the uniform distribution $P_n$, which we define as the normalized $n-m$-dimensional Hausdorff measure on $K_n$. More precisely, letting $H^{n-m}$ denote the $n-m$-dimensional Hausdorff measure, we define for any Borel set $E\subset\R^n$,
\begin{align}
	P_n(E):=\frac{H^{n-m}(E\cap K_n)}{H^{n-m}(K_n)}.\label{def:Hausdorff}
\end{align}
Let now for $n>0$, $X^\n$ be a random vector 
with distribution $P_n$. We can view $X^\n$ as chosen at random, subject to the $m$ linear constraints
\begin{align}
	\sum_{j=1}^nA^\n_{ij}X^\n_j=b^\n_i,\quad i=1,\dots,m.\label{constraints}
\end{align}
Note that the components of $X^\n$ are not independent as they are coupled through the constraints $A^\n X^\n=b^\n$, and except for the case of a symmetric simplex, where $m=1$,  they are also not identically distributed. Finally, as $K_n$ is $n-m$-dimensional, the distribution of the vector $X^\n:=(X^\n_1,\dots,X^\n_n)$ is absolutely singular.

We are interested in the CLT by studying the weighted sum
\begin{align}
	S_n:=\sum_{j=1}^n \l_j^{(n)}(X^\n_j-\E X^\n_j),\label{def:S_n}
\end{align}
for asymptotic normality as $n\to\infty$, where $\E$ denotes expectation w.r.t.~$P_n$ and $\l_1^{(n)},\dots,\l_n^{(n)}\in\R$ are  general weights.
 Geometrically, for $\l^\n$ from the unit sphere, the sum $S_n$ is the scalar projection of the centered random vector $X^\n-\E X^\n$ onto $\l^\n$. One of the main questions addressed here is for which weights $\l_1^\n,\dots,\l_n^\n$, if any, the random variable $S_n$ is asymptotically Gaussian as $n\to\infty$. Notice that the classical choice of weights $\l_j:=n^{-1/2}$ for the central limit theorem in the i.i.d.~framework may not be relevant here, for two reasons. First, while $K_n$ is compact for each $n$, as $n\to\infty$ the polytope might grow, shrink, or fluctuate in size, and hence the magnitudes of each coordinate variable $X^\n_j$ can vary with $n$ and will in general not be uniform in $j$. Secondly, if the vector $\l^\n:=(\l_1^\n,\dots,\l_n^\n)$ is from the row space of $A^\n$, then $S_n$ of \eqref{def:S_n} is deterministic and thus no (non-degenerate) CLT can hold.\\
 
 Our main result is Theorem \ref{thrm1} below where we show, under suitable assumptions, a general central limit theorem for $S_n$. Here, the asymptotic standard deviation is obtained as the norm of the orthogonal projection of the standardized weight vector onto a space defined by the constraints. We also study the marginal densities of finitely many $X_j^\n$ in Theorem \ref{thrm_marginal} and show that they become, appropriately rescaled, independent exponential in the limit $n\to\infty$. Our results are based on a novel approach to distributions under constraints. This approach reveals an underlying independence structure of the dependent $X_j^\n$ that should be of independent interest and that we present as a complex de Finetti type result in Theorem \ref{thrm_mixture}.  We will also see that a key ingredient to understanding the uniform distribution on $K_n$ is based on a maximum entropy principle to determine the probabilistic barycenter of the polytope (see Proposition \ref{Proposition:w} and Corollary \ref{Cor_Entrop}).

\section{Main results}
\subsection{Notation}
To alleviate notation, we will omit the superscript $(n)$ in all objects we are dealing with (e.g. $A^\n$ will be written as $A$ and $X^\n$ as $X$).

To answer the question of good weights leading to a CLT for $S_n$ of \eqref{def:S_n}, consider the set of probability measures on $\R^n_{\geq0}$ whose mean vector exists and lies in the polytope $K_n$. By general theory it is known that there is a unique probability measure $P_{n,w}$ in this set that maximizes entropy, given by
\begin{align}
	dP_{n,w}(x)=\lb\prod_{j=1}^{n}w_j\rb e^{-\sum_{j=1}^{n}  w_jx_j}\dopp1_{\R^{n}_{\geq0}}(x)dx,\label{def:P_n,w}
\end{align}
for some parameters $w_1,\dots,w_n>0$ (see Section \ref{su_maxent}). We will establish that asymptotically $S_n$ of \eqref{def:S_n} will behave similarly under $P_n$ and under $P_{n,w}$. Note that for a random vector $X$ with distribution $P_{n,w}$, the random variables $X_j$ are independent and have exponential distributions with parameters $w_j$, while under $P_n$ the $X_j$ are dependent and not exponentially distributed. In Section \ref{su_maxent}, we will prove interesting properties of $w:=(w_1,\dots,w_n)$, in particular that it lies in the row span of $A$, i.e.~$w=u^tA$ for some $u\in\R^m$, the superscript $t$ denoting transposition. This implies that the density of $P_{n,w}$ is constant on $K_n$, in line with being a good absolutely continuous approximation to the uniform distribution $P_n$ on $K_n$, which is itself absolutely singular. We note in passing that while the $w_j$ do not have an explicit form in terms of $A$ and $b$, it is easy to provide a numerical approximation, a very simple Python program is provided in the appendix for the reader's convenience.

It will prove convenient to rescale the coordinates in units of $w_j$. To this end, we define the matrix $\tilde A$ as 
\begin{align}\label{def:Atilde}
	\tilde A_{ij}:=\frac{A_{ij}}{w_j},\quad 1\leq i\leq m,\ 1\leq j\leq n.
\end{align}
The mean of $P_{n,w}$ being the vector $(\frac{1}{w_j})_{j=1,\cdots,n}$, this change of scale means that we normalize the polytope so that its new entropy center is the vector whose coordinates are all equal to $1$.  
To state our first main theorem we will further have to fix a canonical representative for $(A,b)$ in the characterization \eqref{polytope} of $K_n$. Indeed, if $(A,b)$ is a representation of $K_n$, then so is $(MA,Mb)$ for any invertible $m\times m$ matrix $M$. We define
\begin{align}\label{def:Ahat}
	\hat A:=(\tilde A \tilde A^t)^{-\frac12}\tilde A,\quad \hat b:=(\tilde A \tilde A^t)^{-\frac12}b.
\end{align} 
To briefly motivate this choice we remark that for $S_n$ of \eqref{def:S_n} to approach a normal distribution it is important to measure the {\it gap} of the linear statistic $\sum_{j=1}^n \l_jX_j$ to the $m$ constraints $\sum_{j=1}^nA_{ij}X_j=b_i$, $i=1,\dots,m$. Indeed, if the linear combination $\sum_{j=1}^n \l_jX_j$ is too close to the span of of the equations defining the constraints, then some or all of the randomness of $S_n$ is annihilated, preventing a CLT to hold. This problem will lead to an assumption on the weights $\l_j$. 

A bigger challenge is however to detect whether \textit{asymptotic degeneration} is occurring in the constraints.  By asymptotic degeneration we mean that there are finitely many variables that, as $n\to\infty$, play a much more prominent role in the fulfillment of one or more constraints, compared to the other variables.
This could effectively reduce the dimension of the polytope. The matrix $\hat A$ is much better suited to detect such asymptotic degeneration than $A$ or $\tilde A$ as under $\hat A$ the $m$ rescaled constraints are orthonormal (since $\hat A\hat A^t=I$). This can be understood as a standardization of $A$, decorrelating the constraints and unifying the magnitude of their representing equations. $\hat A$ also encapsulates the $b$-dependence of the polytope and thus allows us to make one simple assumption on $\hat A$ ruling out asymptotic degeneration: we will assume that $\|\hat A\|_{\max}=o(1)$ as $n\to\infty$, where $\|\cdot\|_{\max}$ denotes the maximum norm. A detailed discussion of this assumption is provided in Section \ref{sec:example}. For now it suffices to mention that $\|\hat A\|_{\max}=o(1)$ holds \textit{generically} in the sense that if $A$ and $b$ are chosen at random in a natural way, then $\|\hat A\|_{\max}=o(1)$ with probability going to 1 as $n\to\infty$. We will also see that a simple way to fulfill this assumption is to choose $A$ such that each column in $A$ is repeated infinitely many times as $n\to\infty$. A very simple example of this type would be 
	\begin{align*}
	A:=\begin{pmatrix}
		1&2&1&2&\dots&2\\
		3&4&3&4&\dots&4
	\end{pmatrix},\quad b:=\begin{pmatrix}2\\5\end{pmatrix}.
\end{align*} 
 
\subsection{Statements of main results}
Recall the definition of $S_n$ in \eqref{def:S_n}. Let us normalize the weights  by setting
\begin{align}
\hat\l_j:=\frac{\l_j}{w_j},\quad j=1,\dots,n.
\end{align}
As mentioned above, we are interested in determining which weight vectors $\l$ lead to a CLT for $S_n=\sum_{j=1}^n \l_j(X_j-\E X_j)$. Let us recall that for any $\l$ in the row span of $A$, $S_n$ is not random as in that case $S_n$ is essentially a combination of the constraints \eqref{constraints}. The following theorem is our main result which shows asymptotic normality of $S_n$ as $n\to\infty$ as long as $\l$ is sufficiently bounded away from the row span of $A$. This is quantified nicely in terms of $\hat A$ rather than $A$, further emphasizing the role of the rescaled and decorrelated version $\hat A$ of $A$.  Recall that we use $\|\cdot\|$ for the Euclidean norm.
\begin{theorem}\label{thrm1}
	Assume that $\|\hat A\|_{\max}=o(1)$ holds as $n\to\infty$. Assume further that
	\begin{align}
		 \|\hat\l\|_{\max}= o(1) \text{ as } n\to\infty, \ \limsup_{n\to\infty}\|\hat\l\|<\infty, \text{ and  }\liminf_{n\to\infty}\|P_{\operatorname{ker(\hat A)}}\hat\l\|^2>0,\label{conditions_maintheorem}
	\end{align} 
where $P_{\operatorname{ker(\hat A)}}$ denotes the orthogonal projection to the kernel of $\hat A$.
 Then, with $\s:=\|P_{\operatorname{ker(\hat A)}}\hat\l\|$, $\s^{-1}S_n$  converges in distribution to a standard normal random variable as $n\to\infty$. In \eqref{def:S_n}, the centering $\E X_j^\n$ w.r.t.~$P_n$ may be replaced by the centering $\E_{P_{n,w}}X_j^\n$ w.r.t.~$P_{n,w}$ without altering the result.	\\
	The variance $\|P_{\operatorname{ker(\hat A)}}\hat\l\|^2=\|P_{\operatorname{ker(\tilde A)}}\hat\l\|^2$ can also be written as
\begin{align}
	\|P_{\operatorname{ker(\hat A)}}\hat\l\|^2=\|\hat\l\|^2-\|\hat A\hat\l\|^2.
\end{align}
\end{theorem}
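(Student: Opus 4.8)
\emph{Plan of proof.} The plan is to realise the uniform law $P_n$ as a conditioned product measure and then to read off the central limit theorem from a Fourier/saddle-point analysis in the $m$ constraint directions. By the entropy results (Proposition~\ref{Proposition:w} and Corollary~\ref{Cor_Entrop}), $w$ lies in the row span of $A$, so the density of $P_{n,w}$ is constant on $K_n$; hence $P_n$ is the conditional law of $X$ under $P_{n,w}$ given $AX=b$. In the rescaled coordinates $Z_j:=w_jX_j$ the $Z_j$ become i.i.d.\ standard exponentials, the constraint becomes $\tilde A Z=b$, equivalently $\hat A Z=\hat b$, and $\sum_j\l_jX_j$ becomes $\hat\l^tZ$. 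Since the barycenter $(1/w_j)_j$ of $P_{n,w}$ lies in $K_n$ we also have $\tilde A\mathbf{1}=b$, i.e.\ $\hat A\mathbf{1}=\hat b$: in the rescaled picture the conditioning point is the all-ones vector. Writing $\hat\l=\mu+\hat A^tv$ with $\mu:=P_{\ker\hat A}\hat\l$, the component $(\hat A^tv)^tZ=v^t\hat A Z=v^t\hat b$ is frozen by the conditioning, so under the conditional law $S_n$ has the same distribution as $\mu^tZ-\E[\mu^tZ\mid\hat A Z=\hat b]$. This reduces the theorem to a CLT for $\mu^tZ$ conditioned on $\hat A Z=\hat b$; moreover $\ker\hat A=\ker\tilde A$ because $(\tilde A\tilde A^t)^{-1/2}$ is invertible, and the stated variance identity is the elementary algebra $P_{\ker\hat A}=I-\hat A^t\hat A$ (using $\hat A\hat A^t=I$) together with $\|\hat A^t\hat A\hat\l\|^2=\|\hat A\hat\l\|^2$.

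Next I would compute the conditional characteristic function by Fourier inversion in the $m$ constraint variables. Since $Z_j\sim\mathrm{Exp}(1)$,
\[
\E\,e^{\,it\mu^tZ+is^t\hat A Z}=\psi_n(t,s):=\prod_{j=1}^n\bigl(1-i(t\mu_j+(\hat A^ts)_j)\bigr)^{-1},\qquad (t,s)\in\R\times\R^m,
\]
and for $n$ large $\psi_n(t,\cdot)\in L^1(\R^m)$, because $\|\hat A\|_{\max}=o(1)$ forces, along every direction of $s$, at least $c\,\|\hat A\|_{\max}^{-2}\gg m$ columns of $\hat A$ to be non-orthogonal to $s$, giving decay faster than $\|s\|^{-m}$. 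Hence $V:=\hat A Z$ has a continuous positive density at $\hat b$ and
\[
\E\bigl[e^{\,it\mu^tZ}\mid\hat A Z=\hat b\bigr]=\Bigl(\int_{\R^m}\psi_n(t,s)\,e^{-is^t\hat b}\,ds\Bigr)\Big/\Bigl(\int_{\R^m}\psi_n(0,s)\,e^{-is^t\hat b}\,ds\Bigr).
\]
This ratio is exactly the quantity made transparent by the complex de~Finetti representation of Theorem~\ref{thrm_mixture}, which exhibits $P_n$ as a complex mixture of products of exponentials, and one may phrase the rest of the argument through that representation.

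The third step is the saddle-point evaluation. With $-\log(1-ix)=ix-\tfrac12 x^2+O(x^3)$ applied to $x_j:=t\mu_j+(\hat A^ts)_j$, and the identities $\sum_j\mu_j(\hat A^ts)_j=s^t\hat A\mu=0$, $\sum_j(\hat A^ts)_j^2=s^t\hat A\hat A^ts=\|s\|^2$, $\mathbf{m}_n:=\sum_j\mu_j$, and $\sum_j\hat A_{ij}=(\hat A\mathbf{1})_i=\hat b_i$, one gets
\[
\log\psi_n(t,s)=i\,(t\mathbf{m}_n+s^t\hat b)-\tfrac12\bigl(\sigma^2t^2+\|s\|^2\bigr)+O\bigl(\textstyle\sum_{j}|x_j|^3\bigr).
\]
Replacing $t$ by $t/\sigma$, the cubic remainder is $O\bigl(|t|^3\|\mu\|_{\max}/\sigma+\|\hat A\|_{\max}\|s\|^3\bigr)$ (using $\sum_j|\mu_j|^3\le\|\mu\|_{\max}\|\mu\|^2$ and $\sum_j|(\hat A^ts)_j|^3\le\sqrt m\,\|\hat A\|_{\max}\|s\|^3$), which tends to $0$ for fixed $t,s$ by \eqref{conditions_maintheorem} and $\liminf\sigma>0$. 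Hence $\psi_n(t/\sigma,s)e^{-is^t\hat b}\to e^{\,it\mathbf{m}_n/\sigma}e^{-t^2/2}e^{-\|s\|^2/2}$ pointwise; granting a uniform domination (below), the numerator tends to $e^{\,it\mathbf{m}_n/\sigma}e^{-t^2/2}(2\pi)^{m/2}$ and the denominator (with $t=0$) to $(2\pi)^{m/2}$. The surviving phase $e^{\,it\mathbf{m}_n/\sigma}$ is cancelled by the centering, since the same formula gives $\E[\mu^tZ\mid\hat A Z=\hat b]=\mathbf{m}_n+o(\sigma)$ (differentiate at $0$, or estimate the first moment directly); this last estimate also shows that replacing $\E X_j$ by $\E_{P_{n,w}}X_j=1/w_j$ alters $\sigma^{-1}S_n$ only by $o(1)$. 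Collecting terms yields $\E[e^{\,it\sigma^{-1}S_n}]\to e^{-t^2/2}$, which is the claim.

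The main obstacle is the uniform-in-$n$ control of the $s$-integral outside a fixed ball, needed to pass to the limit under the integral and to legitimise the saddle-point approximation. The quantitative input is again $\|\hat A\|_{\max}=o(1)$: each column of $\hat A$ then has norm $O(\|\hat A\|_{\max})$ while $\sum_j(\hat A^ts)_j^2=\|s\|^2$, which yields a bound of the shape $|\psi_n(0,s)|\le\exp\bigl(-c\|s\|^2/(1+\|\hat A\|_{\max}^2\|s\|^2)\bigr)$ dominating the region $\|s\|\le R$ once $n$ is large, while the polynomial decay coming from the ``many non-orthogonal columns'' estimate controls $\{\|s\|>R\}$; together these furnish an $n$-independent dominating function and show the tail contribution is negligible. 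A minor bookkeeping point feeding into the saddle-point step is that $\limsup\|\hat\l\|<\infty$ and $\|\hat A\|_{\max}=o(1)$ force $\|\mu\|_{\max}=o(1)$ via $\mu=\hat\l-\hat A^t\hat A\hat\l$ and $\|\hat A\|_{\mathrm{op}}=1$, which is what makes the Lindeberg-type control of the cubic term effective.
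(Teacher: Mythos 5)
Your overall route is essentially the paper's: represent the characteristic function of $S_n$ as a ratio of $\R^m$-integrals of products of exponential characteristic functions (your Fourier-inversion ratio is exactly the Bartlett formula of Corollary \ref{cor_Bartlett}, obtained here via Theorem \ref{thrm_mixture}), expand in the constraint variables to get a Gaussian limit pointwise in $s$, and recover the variance $\|\hat\l\|^2-\|\hat A\hat\l\|^2$ from $\hat A\hat A^t=I$; your decomposition $\hat\l=\mu+\hat A^tv$ with the frozen component is a harmless repackaging of the paper's cross term $2t\langle\eta,\hat A\hat\l\rangle$, and your small observations ($\hat A\mathbf 1=\hat b$, $\|\mu\|_{\max}=o(1)$, $\ker\hat A=\ker\tilde A$) are correct.

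The genuine gap is the step you relegate to a heuristic: the $n$-uniform integrable domination of $\lvert\psi_n(t,s)\rvert$ over $s\in\R^m$, which you need both to pass to the limit under the integral and for your first-moment (centering) estimate. Your claim that $\|\hat A\|_{\max}=o(1)$ forces ``at least $c\|\hat A\|_{\max}^{-2}$ columns non-orthogonal to $s$, giving decay faster than $\|s\|^{-m}$'' is not a proof and does not obviously deliver a bound uniform in $n$: from $\hat A\hat A^t=I$ and $\sum_j\langle u,\hat A_{\bullet j}\rangle^2=1$ a pigeonhole only gives many columns above an $n$-dependent threshold (of order $n^{-1/2}$) or an uncontrolled number above a fixed threshold, and the resulting bounds degrade precisely in the intermediate regime $\|s\|\gtrsim\|\hat A\|_{\max}^{-1}$, where your small-ball bound $\exp(-c\|s\|^2/(1+\|\hat A\|_{\max}^2\|s\|^2))$ also stops improving (the exponent plateaus at order $\|\hat A\|_{\max}^{-2}$ while the volume keeps growing). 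The elementary one-block bound $\prod_j(1+\langle s,\hat A_{\bullet j}\rangle^2)\geq1+\|s\|^2$ only yields decay $\|s\|^{-1}$, which is not integrable in $\R^m$; the paper even notes that Brascamp--Lieb with the optimal constant is insufficient here. What is actually needed — and what your proposal is missing — is Property A: the existence, uniformly in $n$, of $m+1$ pairwise disjoint column sets $I_1,\dots,I_{m+1}$ with $\det(\hat A_{I_l}\hat A_{I_l}^t)\geq C>0$, which the paper extracts from $\|\hat A\|_{\max}=o(1)$ via the L\'evy--Steinitz rearrangement lemma (Lemma \ref{lemma_Levy-Steinitz}, Proposition \ref{prop:examplebis}(1)) and which yields the $n$-independent dominating function $\lb1+c\s^{-2}\|\eta\|^2\rb^{-(m+1)/2}$ on all of $\R^m$. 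Relatedly, your centering step $\E[\mu^tZ\mid\hat AZ=\hat b]=\mathbf m_n+o(\s)$ is asserted rather than proved (``differentiate at $0$, or estimate the first moment directly''); the paper handles exactly this point with an analytic-continuation/Vitali argument, and any direct route again rests on the same uniform domination you have not established.
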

\begin{rem}
The assumption $\|\hat\l\|_{\max}= o(1)$ above is in general necessary for the asymptotic normality as $\l_jX_j=\hat\l_jw_jX_j$, and $w_jX_j$ is close to an exponentially distributed random variable with parameter 1 (see Theorem \ref{thrm_marginal} below). It guarantees that no single variable $X_j$ has a non-negligible influence on the sum $S_n$. The two remaining assumptions in \eqref{conditions_maintheorem} ensure that the variance $\s^2$, which  depends on $n$, is bounded above and bounded away from 0. 
		 In particular, $\liminf_{n\to\infty}\|P_{\operatorname{ker(\hat A)}}\hat\l\|^2>0$ in \eqref{conditions_maintheorem} ensures that $\sum_{j=1}^n \l_jX_j$ is sufficiently far away from the constraints. Indeed, any $\hat\l$ can be written uniquely as the sum $\hat\l_1+\hat\l_2$ of an element $\hat\l_1$ of the row space of $\hat A$ and an element $\hat\l_2$ of the kernel of $\hat A$. However, as the rows of $\hat A$ describe the constraints, the sum $\sum_{j=1}^n \hat\l_{1,j}(X_j-\E X_j)$ is deterministic, thus explaining the occurrence of $P_{\operatorname{ker(\hat A)}}$.
\end{rem}

Let us now have a look at the asymptotic marginal distributions of the $X_j$. While for finite $n$ the $X_j$ are dependent and clearly not exponentially distributed (they are bounded), in the limit $n\to\infty$ they become independent exponential random variables under a rescaling.

\begin{theorem}\label{thrm_marginal}
	Assume that $\|\hat A\|_{\max}=o(1)$ holds as $n\to\infty$. Then, for any positive integer $k$ and any $j_1,\dots,j_k\in\{1,\dots,n\}$ as $n\to\infty$
	\begin{align}
		\lb w_{j_1}X_{j_1},\dots,w_{j_k}X_{j_k}\rb\to(Y_1,\dots,Y_k)
	\end{align}
in distribution, where $Y_1,\dots,Y_k$ are i.i.d.~random variables with exponential distribution with parameter 1.
\end{theorem}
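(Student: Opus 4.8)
The plan is to exploit an exact identity between $P_n$ and a conditioned product measure. The crucial input is Proposition \ref{Proposition:w}: since $w$ lies in the row span of $A$, say $w=u^tA$, we have $\sum_jw_jx_j=u^tAx=u^tb$ constant on $K_n$, so the density \eqref{def:P_n,w} of $P_{n,w}$ is constant on the affine hull of $K_n$, whence $P_n$ is exactly the conditional law of $P_{n,w}$ given $\{Ax=b\}$. I would then pass to the rescaled coordinates $V_j:=w_jX_j$: under $P_{n,w}$ the $V_j$ are i.i.d.\ $\mathrm{Exp}(1)$, and under $P_n$ the vector $(V_1,\dots,V_n)$ has the law of $(V_1,\dots,V_n)$ conditioned on $\tilde AV=b$, equivalently — premultiplying by the invertible matrix $(\tilde A\tilde A^t)^{-1/2}$ — conditioned on $\hat AV=\hat b$, where $\hat b=\hat A\mathbf 1$ since the mean $(1/w_j)_j$ of $P_{n,w}$ lies in $K_n$, i.e.\ $\tilde A\mathbf 1=b$. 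Thus it suffices to prove that conditioning i.i.d.\ $\mathrm{Exp}(1)$ variables on the $m$ linear relations $\hat AV=\hat b$ leaves any fixed number $k$ of coordinates asymptotically i.i.d.\ $\mathrm{Exp}(1)$.

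Next, relabelling so that $(j_1,\dots,j_k)=(1,\dots,k)$, I would split $\hat A=[B\,|\,C]$ into its first $k$ and last $n-k$ columns and $V=(V',V'')$ accordingly, and disintegrate the law of $V$ along $V\mapsto\hat AV$ (legitimate since $C$ and $\hat A$ have full row rank $m<n-k$, the full rank of $C$ following from the $o(1)$ hypothesis) to get, for the conditional marginal density of $V'$,
\begin{align*}
	q_n(v')=e^{-(v_1+\dots+v_k)}\,\frac{g_C\bigl(\hat b-Bv'\bigr)}{g_{\hat A}(\hat b)},\qquad v'\in\R^k_{\geq0},
\end{align*}
where $g_C$ and $g_{\hat A}$ are the Lebesgue densities on $\R^m$ of $CV''$ and of $\hat AV$. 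Both are densities of sums of independent $\R^m$-valued summands $\hat A^{(j)}V_j$ with $\|\hat A^{(j)}\|\le\sqrt m\,\|\hat A\|_{\max}=o(1)$ and total covariances $\hat A\hat A^t=I$, $CC^t=I-BB^t=I+o(1)$ (using $\|B\|_{\mathrm{op}}^2\le km\,\|\hat A\|_{\max}^2=o(1)$). The heart of the matter is a \emph{local} multivariate central limit theorem:
\begin{align*}
	g_{\hat A}(\hat b+t)\longrightarrow\varphi(t),\qquad g_C(C\mathbf 1''+t)\longrightarrow\varphi(t)\quad\text{uniformly for $t$ in compact sets,}
\end{align*}
$\varphi$ being the standard Gaussian density on $\R^m$. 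Granting this, and noting that $B(\mathbf 1'-v')\to0$ for fixed $v'$ and that the centres $\E[\hat AV]=\hat b$ and $\E[CV'']=C\mathbf 1''=\hat b-B\mathbf 1'=\hat b+o(1)$ essentially coincide, one obtains $q_n(v')\to e^{-(v_1+\dots+v_k)}$ pointwise on $\R^k_{\geq0}$; since these are probability densities, Scheff\'e's lemma upgrades this to convergence in total variation of $(V_1,\dots,V_k)$ to i.i.d.\ $\mathrm{Exp}(1)$, and undoing the rescaling $V_j=w_jX_j$ finishes the proof.

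To prove the local CLT I would use Fourier inversion. The characteristic function of $\hat AV$ at $\xi\in\R^m$ is $\prod_{j=1}^n\bigl(1-i(\hat A^t\xi)_j\bigr)^{-1}$; with $a_j(\xi):=(\hat A^t\xi)_j$ one has $\sum_ja_j(\xi)^2=\|\hat A^t\xi\|^2=\|\xi\|^2$ (because $\hat A\hat A^t=I$) and $\max_j|a_j(\xi)|\le\|\hat A\|_{\max}\|\xi\|_1=o(1)\,\|\xi\|$. On any fixed ball this yields $-\sum_j\log\bigl(1-ia_j(\xi)\bigr)=i\langle\hat b,\xi\rangle-\tfrac12\|\xi\|^2+O\bigl(\sum_j|a_j(\xi)|^3\bigr)$ with remainder $\sum_j|a_j(\xi)|^3\le\max_j|a_j(\xi)|\cdot\|\xi\|^2=o(1)$, so the ball's contribution to the inversion integral converges to $\varphi$; the analogous expansion holds with $C$ in place of $\hat A$. \textbf{I expect the tail estimate to be the main obstacle: one must show that for $n$ large the characteristic function is integrable and that its integral over $\{\|\xi\|>R\}$ is small uniformly in $n$, using the rank-$m$ structure of $\hat A$ and $\|\hat A\|_{\max}=o(1)$ to ensure that $\hat A^t\xi$ spreads over many coordinates, so that $\prod_j(1+a_j(\xi)^2)^{-1/2}$ decays fast in $\|\xi\|$.} The drift of $\hat b$ with $n$ should be harmless, since it enters the inversion integrals only through the unimodular factor $e^{-i\langle\xi,\hat b\rangle}$. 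As a last remark, the formula for $q_n$ above realises $P_n$, in the $V$-coordinates, as a complex mixture of product exponential laws — matching the de Finetti-type picture of Theorem \ref{thrm_mixture} — and the local CLT is precisely the statement that this mixture concentrates.
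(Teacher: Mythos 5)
Your route is genuinely different from the paper's (which works with the joint characteristic function through the complex de Finetti/Bartlett representation of Corollary \ref{cor_Bartlett} and dominated convergence in the $\eta$-integral), and its skeleton is sound: the identification of $P_n$ as $P_{n,w}$ conditioned on $Ax=b$ is correct because $\sum_j w_jx_j=\langle u,b\rangle$ is constant on $K_n$ (though conditioning on a null event needs a coarea-formula or approximation argument to be rigorous — the paper does this via the Gaussian approximation $P_n^\gamma$ of Proposition \ref{prop:convergence}); the conditional density formula $q_n(v')=e^{-(v_1+\dots+v_k)}g_C(\hat b-Bv')/g_{\hat A}(\hat b)$ is the right object; and $\hat b=\hat A\mathbf 1$, $\hat A\hat A^t=I$, $CC^t=I+o(1)$, $B=o(1)$ are all correct, so a local CLT at the mean plus Scheff\'e would indeed finish the proof.

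However, the step you yourself flag as "the main obstacle" is a genuine gap, and it is not a routine tail estimate: you must show that $\int_{\|\xi\|>R}\prod_{j}(1+\langle\xi,\hat A_{\bullet j}\rangle^2)^{-1/2}d\xi$ (and the analogous integral with the columns $j\notin\{j_1,\dots,j_k\}$, for $g_C$) is finite and small uniformly in $n$. Nothing in "$\hat A$ has rank $m$ and $\|\hat A\|_{\max}=o(1)$" hands you this directly; the paper explicitly notes that even the Brascamp--Lieb inequality with optimal constant is too weak to give the required uniform-in-$n$ integrability. The paper's solution is Property A: by Proposition \ref{prop:examplebis}(1) — whose proof is itself nontrivial, resting on the L\'evy--Steinitz rearrangement lemma — the assumption $\|\hat A\|_{\max}=o(1)$ yields, for any fixed $K$, pairwise disjoint column subsets $I_1,\dots,I_K$ with $\det(\hat A_{I_l}\hat A_{I_l}^t)\geq C>0$ uniformly in $n$; grouping the factors of the product accordingly and using $\prod_{j\in I_l}(1+x_j)\geq 1+\sum_{j\in I_l}x_j$ gives the bound $\bigl(1+c\|\xi\|^2\bigr)^{-K/2}$, integrable and uniformly small in the tail once $K\geq m+1$. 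To make your argument work you need this with $K=m+1$ for $g_{\hat A}$ and, for $g_C$, with $K=m+1+k$ so that the subsets can be chosen disjoint from $\{j_1,\dots,j_k\}$ — exactly the device the paper uses in its own proof of this theorem. Without Property A (or an equivalent quantitative statement), your local CLT, and hence the pointwise convergence $q_n(v')\to e^{-(v_1+\dots+v_k)}$, is unproven; with it, your conditioning-plus-Scheff\'e argument closes and even yields the slightly stronger total-variation convergence of the $k$-dimensional marginals.
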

Our results rely on a novel method of expressing useful quantities of the uniform distribution $P_n$ as a mixture of similar quantities of independent distributions. The essence of this approach is shown by the following result which may be seen as a \textit{complex de Finetti theorem}. 

 We use $\i$ for denoting the imaginary unit and for a matrix $A$ we denote by $A_{l\bullet}$ its $l$'th row and by $A_{\bullet j}$ its $j$'th column.
\begin{theorem}\label{thrm_mixture}
    Let $A$ be such that its rank can not be reduced by removing one column. Then, for any set $E$ of the form $E=\bigtimes_{j=1}^n E_j$ with the $E_j$ being intervals in $\R$, 
    \begin{align}\label{equ:mixture}
        P_n(E)=\int_{\R^m} P_{n,w,\eta}(E)dP_{n,\textup{mix}}(\eta),
    \end{align}
    where $P_{n,w,\eta}$ is the complex-valued measure on $\R^n$ defined by
    \begin{align}\label{def:complex_measure}
        & P_{n,w,\eta}(B):=\frac{1}{Z_{n,w,\eta}}\int_Be^{-\sum_{j=1}^{n}  (w_j-\i \langle\eta, A_{\bullet j}\rangle)x_j}\dopp1_{\R^{n}_{\geq0}}(x)dx,\quad B\in\mc B_{\R^n},
    \end{align}
    $Z_{n,w,\eta}$ is the normalizing constant 
    \begin{align}
        &Z_{n,w,\eta}:=\int_{\R^n}e^{-\sum_{j=1}^{n}  (w_j-\i \langle\eta, A_{\bullet j}\rangle)x_j}\dopp1_{\R^{n}_{\geq0}}(x)dx,
    \end{align}
    and the complex-valued mixture measure $P_{n,\textup{mix}}$ on $\R^m$ is defined as
    \begin{align}
        dP_{n,\textup{mix}}(\eta)=\frac{1}{Z_{n,\textup{mix}}}\E_{P_{n,w}}e^{\i \sum_{j=1}^n\langle \eta,A_{\bullet j}\rangle X_j-\i\langle\eta,b\rangle}d\eta,\quad   Z_{n,\textup{mix}}:=\int_{\R^m}\E_{P_{n,w}}e^{\i \sum_{j=1}^n\langle \eta,A_{\bullet j}\rangle X_j-\i\langle\eta,b\rangle}d\eta.
    \end{align}
    In particular, $0<\lv Z_{n,\textup{mix}}\rv<\infty$ and $0<\lv Z_{n,w,\eta}\rv<\infty$ for any $\eta\in\R^m$.
\end{theorem}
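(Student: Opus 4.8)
The plan is to establish \eqref{equ:mixture} by a Fourier-inversion argument applied to the delta constraint $A x = b$, treating the uniform measure $P_n$ on $K_n$ as the conditional law of an exponential vector given the constraint. Concretely, I would start from the observation (to be justified carefully, since $P_n$ is singular) that for a product of intervals $E=\bigtimes_j E_j$,
\begin{align}
P_n(E) = \lim_{\e\to0}\frac{P_{n,w}\big(X\in E,\ \|AX-b\|\le\e\big)}{P_{n,w}\big(\|AX-b\|\le\e\big)},
\end{align}
i.e.\ the uniform law on $K_n$ is obtained by conditioning the reference product measure $P_{n,w}$ of \eqref{def:P_n,w} on the event $AX=b$; here one uses that the density of $P_{n,w}$ is constant on $K_n$ (proved earlier: $w=u^tA$), so conditioning the \emph{weighted} Lebesgue measure on the affine subspace $\{Ax=b\}$ yields precisely the normalized Hausdorff measure. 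Then I would represent the (regularized) indicator of $\{Ax=b\}$ via its Fourier transform: the density of $AX-b$ under $P_{n,w}$ has characteristic function $\eta\mapsto \E_{P_{n,w}}e^{\i\langle\eta,AX-b\rangle}$, so by Fourier inversion the conditional density of $AX-b$ at $0$ unfolds the $\int_{\R^m}d\eta$ that produces $P_{n,\textup{mix}}$, and inserting the factor $e^{\i\langle\eta,AX\rangle}=e^{\i\sum_j\langle\eta,A_{\bullet j}\rangle X_j}$ into $P_{n,w}$ tilts each coordinate's exponential rate from $w_j$ to $w_j-\i\langle\eta,A_{\bullet j}\rangle$, i.e.\ produces exactly $P_{n,w,\eta}$ up to the normalizing constant $Z_{n,w,\eta}$. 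Collecting the $\eta$-dependent normalizations into $Z_{n,\textup{mix}}$ and checking the bookkeeping of constants gives \eqref{equ:mixture}.

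The key steps, in order, would be: (i) verify the conditioning identity, using $w\in\operatorname{rowspan}(A)$ to identify conditioned weighted Lebesgue measure with $H^{n-m}$-Hausdorff measure on $K_n$; (ii) write the conditional law as a Fourier inversion integral over $\eta\in\R^m$, justifying the interchange of $\int d\eta$ with the probability by an absolute-integrability estimate; (iii) recognize the $\eta$-tilted integrand as $Z_{n,w,\eta}\,dP_{n,w,\eta}$ times the density appearing in $dP_{n,\textup{mix}}$, and match all normalizing constants; (iv) prove the finiteness and non-vanishing claims $0<|Z_{n,w,\eta}|<\infty$ and $0<|Z_{n,\textup{mix}}|<\infty$. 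For (iv), $Z_{n,w,\eta}=\prod_{j=1}^n (w_j-\i\langle\eta,A_{\bullet j}\rangle)^{-1}$ is an explicit product of nonzero factors (each has positive real part $w_j>0$), so it is manifestly finite and nonzero; the integrability of $\eta\mapsto|Z_{n,w,\eta}|\cdot(\text{density})$ needed in (ii) and for $Z_{n,\textup{mix}}$ is where the hypothesis that the rank of $A$ cannot be reduced by deleting a column enters — this is exactly what forces enough of the factors $|w_j-\i\langle\eta,A_{\bullet j}\rangle|^{-1}\sim|\langle\eta,A_{\bullet j}\rangle|^{-1}$ to decay in every direction $\eta\to\infty$, so that the product is integrable over $\R^m$ and $Z_{n,\textup{mix}}$ is a well-defined nonzero complex number.

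I expect step (iv), and specifically the integrability of the mixture density, to be the main obstacle: one must show that for every direction the set of columns $A_{\bullet j}$ not orthogonal to $\eta$ is large enough (at least $m+1$ of them, uniformly, thanks to the no-column-removal rank condition) to make $\prod_j|w_j-\i\langle\eta,A_{\bullet j}\rangle|^{-1}$ decay like $|\eta|^{-(m+1)}$ at infinity, which is integrable on $\R^m$. The non-vanishing of $Z_{n,\textup{mix}}$ then follows because $Z_{n,\textup{mix}}=(2\pi)^m$ times the density of $AX-b$ under $P_{n,w}$ evaluated at $0$, which is strictly positive since $b$ lies in the relative interior of the support. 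The remaining steps are routine Fourier analysis once the conditioning identity in (i) is set up correctly; the one subtlety there is the absolute-singularity of $P_n$, handled by the $\e\to0$ regularization and a standard disintegration of Lebesgue measure along the fibers of $x\mapsto Ax$.
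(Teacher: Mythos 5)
Your overall strategy (regularize the constraint $Ax=b$, represent the regularized delta by a Fourier integral over $\eta\in\R^m$, and recognize the $\eta$-tilted product measure as $P_{n,w,\eta}$) is essentially the paper's route, which uses the Gaussian approximation $P_n^\g$ together with the Hubbard--Stratonovich identity instead of ball-conditioning plus Fourier inversion; steps (i)--(iii) could be pushed through along your lines with the same care the paper spends on its Proposition \ref{prop:convergence} (continuity sets, identification of the conditioned measure with normalized Hausdorff measure, and boundedness in $\eta$ of $P_{n,w,\eta}(E)$ for product sets, which is what makes the restriction to $E=\bigtimes_j E_j$ necessary). The genuine gap is in your step (iv), which is in fact the technical heart of the theorem. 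The rank hypothesis does \emph{not} imply that every direction $\eta\neq0$ sees at least $m+1$ columns with $\langle\eta,A_{\bullet j}\rangle\neq0$; it only guarantees two such columns. For instance, with $m=2$, first row of $A$ all ones and second row $(1,2,0,\dots,0)$, deleting any single column leaves rank $2$, yet $\eta=(0,1)^t$ is non-orthogonal to only the first two columns, so along that ray the product $\prod_j\lv w_j-\i\langle\eta,A_{\bullet j}\rangle\rv^{-1}$ decays only like $\lv\eta\rv^{-2}$, not $\lv\eta\rv^{-(m+1)}$. Moreover, even a correct ray-wise decay exponent would not by itself give integrability over $\R^m$: the constants degenerate as the direction approaches the exceptional ones, and one must control the product on cones around such directions rather than on single rays.

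This is precisely why the paper's Lemma \ref{lemma_integrability} does not argue by counting non-orthogonal columns per direction but instead invokes the Brascamp--Lieb inequality together with Barthe's characterization of when the Brascamp--Lieb constant $D_{c,B}$ is positive: the exponents $c_j\in(0,1)$ are built as a convex combination of indicators $\mb1_{I_{j,1}},\mb1_{I_{j,2}}$ of spanning $m$-subsets that respectively contain and omit each index $j$, and it is exactly the hypothesis that the rank cannot drop after deleting one column which guarantees, for every $j$, a spanning $m$-subset avoiding $j$. Without an argument of this strength, the integrability of $\eta\mapsto\E_{P_{n,w}}e^{\i\sum_j\langle\eta,A_{\bullet j}\rangle X_j}$ — hence the finiteness of $Z_{n,\textup{mix}}$ and the dominated-convergence step that lets you pass to the limit inside the $\eta$-integral — is unproved, so your proposal as written does not close. (Your remaining points in (iv) are fine: $Z_{n,w,\eta}=\prod_j(w_j-\i\langle\eta,A_{\bullet j}\rangle)^{-1}$ is manifestly finite and nonzero, and the non-vanishing of $Z_{n,\textup{mix}}$ does follow from its identification with a positive multiple of the limiting denominator, as in the paper.)
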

\begin{rem}\label{rem_de_Finetti}
    \begin{enumerate}
        \item Theorem \ref{thrm_mixture}  resembles the classical de Finetti theorem expressing infinite exchangeable sequences as mixtures of i.i.d.~sequences. Note however that a classical de Finetti representation does not generally hold for finite (exchangeable) sequences \cite{DiaconisFreedman}. Moreover, except for the case $m=1$ of one constraint with equal entries $A=(c,c,\dots,c),\ c>0$, which corresponds to a symmetric simplex, the random vector $X$ chosen uniformly from $K_n$ does not consist of finitely exchangeable random variables. Recently the notion of weighted exchangeability has been introduced in the infinite setting in \cite{BCRT24}, and been studied in the finite setting in \cite{Tang2023}. Briefly speaking, weighted exchangeable random variables are classically exchangeable after a change of variables. We believe this notion to cover the case $m=1$ of one constraint with not necessarily equal weights, but not the case $m>1$, as several linearly independent constraints lead to random variables $X_j$ with a different dependence structure.
While the measures $P_{n,w,\eta}$ in \eqref{equ:mixture} are not probability measures, they share the main algebraic advantage of factorization into marginal measures with probability distributions of independent random variables. 

One drawback of having complex-valued measures in the representation \eqref{equ:mixture} is however that it does not apply to all Borel sets $E$. One natural restriction on $E$ comes from the absolute singularity of $P_n$ and the absolute continuity of $P_{n,w,\eta}$, necessitating that  $E$ is a continuity set for $P_n$, i.e.~$P_n(\partial E)=0$, where $\partial E$ denotes the boundary of $E$. The main restriction on $E$ however stems from the lack of uniform boundedness of complex-valued measures, even if normalized to total mass 1. More precisely, for a general Borel set $E$, the quantity $\lv P_{n,w,\eta}(E)\rv$ is not bounded in $\eta$. The reason for this is that for a rough set $B=E$ the decay in $\eta$ in the Fourier transform in \eqref{def:complex_measure} can be much slower than the decay of its normalizing constant $Z_{n,w,\eta}$. The range of sets $E$ for which \eqref{equ:mixture} holds could be extended to continuity sets $E$ with smooth boundary using results from harmonic analysis. As we will not apply Theorem \ref{thrm_mixture} directly but in the form of Corollary \ref{cor_Bartlett} below for the characteristic function of $S_n$, we will not pursue this generalization here.
\item Our proof below, see Lemma \ref{lemma_integrability}, will show that the rank condition on $A$ is in fact not just sufficient but also necessary for the representation \eqref{equ:mixture} to hold. It guarantees just enough decay in $\eta$ to ensure the finiteness of the $\eta$-integral in \eqref{equ:mixture}. The rank condition on $A$ is equivalent to the simple geometric condition that no variable $X_j$ is constant for $X$ uniform on $K_n$, see Lemma \ref{lemma_integrability} below. 

 Making stronger assumptions on $A$ like requiring each column to be repeated once or several times would be another way to enlarge the class of $E$ for which \eqref{equ:mixture} holds.
    \end{enumerate}
\end{rem}
\subsection{Related results}
 In recent years there has been a lot of interest in the limiting behavior of sums of random variables derived from high-dimensional geometric objects. This partly goes back to Klartag's celebrated result \cite{Klartag} stating that if $X=(X_1,\dots,X_n)$ is a vector taken at random from a convex body in $\R^n$ such that it is centered and has the identity as covariance matrix, projections $\langle \l,X\rangle=\sum_{j=1}^n\l_jX_j$  are, for $n$ large, close to a standard normal distribution. This impressive result is a CLT for uncorrelated centered random variables and holds for most directions $\l\in\R^n,\,\|\l\|=1$, in the sense that if a $\l$ with $\|\l\|=1$ is picked at random, with high probability the projection of $X$ will tend to a normal distribution as $n\to\infty$. 
 Klartag's result \cite{Klartag} is one in a line of general results, we mention here \cite{Sudakov,vonWeizsacker,Bobkov,Meckes} and references therein. Our setup does not fall in the scope of these works, for example due to our $X$ being supported in a lower-dimensional space. Generally speaking, our setup is more specialized but as a consequence our results are far more explicit.

Random variables under a single specific constraint have been studied for a long time. For a historical perspective on the genesis of the very first result on the high-dimensional $l_2$-sphere, corresponding to the constraint $\sum_{j=1}^n X_j^2$, we refer to Section 6 of \cite{DiaFr}. Attributed to Borel and Poincar\'e, the result states that asymptotically any finite dimensional projection of a uniform vector in the high dimensional $l_2$-sphere is Gaussian. A classical proof can be based on the representation of a uniformly distributed random vector on the $l_2$-sphere as a normalized standard Gaussian vector. Generalizing this probabilistic representation to the $l_p$-ball led to interesting geometric results in \cite{BartheGuedon}. 
In recent years, there has been renewed interest in studying the asymptotic properties related to the Borel-Poincaré theorem for the $l_p$-ball in the case of random projections. This was driven by the innovative work \cite{GKR} where large deviations for the quenched and annealed regimes are provided. This last paper was followed by a series of papers by Kabluchko, Prochno, Th\"ale \textit{et al.}~which extended and generalized the results in many directions ranging, for example, from Schatten classes \cite{kabluchko2020sanov} to the Lorentz ball \cite{kabluchko2024probabilistic}. The simplex as a section of the $l_1$-ball is discussed in \cite{Bacietal}. We refer to \cite{prochno2019geometry} for a summary and context of some of these results. The case of Stiefel manifolds is studied in \cite{kim2023large}.  Often, in these works, the key ingredient is the use of the representation of the uniform distribution on the ball via certain gamma or Dirichlet distributions. This approach also led to asymptotic results for a family of distributions on the $l_p$-ball in \cite{Bartheetal}. In the latter work, a link is made with the case where the high-dimensional convex set studied probabilistically is a moment space.  Borel-Poincaré theorems (and their refinements) in these so-called random moment problems have been widely studied in \cite{GamboaLozada}, \cite{Lozada}, \cite{dette2012distributions}, \cite{gamboa2012large}, \cite{DTV1,DTV2}, initiated by the pioneering work of Chang {\it et al} (see \cite{CKS}). In all these works, the main ingredient for the asymptotic study is the existence of a coordinate system based on the Knothe-Rosenblatt  map \cite{knothe1957contributions},  called canonical moments (see \cite{dette1997theory}), for which the push-forward probability measure of the uniform law is a product measure. In addition, the components of this product can be constructed from gamma or Dirichlet distributions. 

The present work adds to all of these results insofar that it develops an approach that does not rely on explicit highly-specific representations in terms of independent random variables, and crucially opens up the analysis of random variables under several constraints. As we will see in the next section, the interplay between the different constraints can lead to interesting phenomena that cannot be observed in the single constraint case.\\

The rest of the paper is organized as follows. In the next section we provide a detailed discussion of the assumption $\|\hat A\|_{\max}=o(1)$ that is used for our asymptotic results. In particular, Proposition \ref{prop:examplebis} gives a very simple way of constructing examples of $K_n$ fulfilling this assumption, but also shows that $\|\hat A\|_{\max}=o(1)$ is a generic property. Section \ref{su_Unif_Bart} starts the analysis of $P_n$ by providing an absolutely continuous approximation to the absolutely singular $P_n$. This approximation is then utilized in Section \ref{sec:deFinetti} to prove the complex de Finetti representation Theorem \ref{thrm_mixture}. As a corollary we also present a very useful Bartlett type representation of the characteristic function of $S_n$ that we will use later on to prove our asymptotic results. Section \ref{su_maxent} contains the entropy analysis of $K_n$, in particular existence and properties of the parameters $w_j$ are established. These properties are then used in Section \ref{sec:proofexample} to provide a proof of Proposition \ref{prop:examplebis}. The proofs of Theorems \ref{thrm1} and \ref{thrm_marginal} are given in Section \ref{su_AsA}. A simple Python code for a numerical computation of the parameters $w_j$ is provided in the appendix.

\section{On the assumption $\|\hat A\|_{\max}=o(1)$}\label{sec:example}
The assumption $\|\hat A\|_{\max}=o(1)$ serves as an asymptotic analog of the condition that the rank of $A$ cannot be reduced by removing one column, which is a necessary and sufficient condition for the non-asymptotic complex de Finetti representation \eqref{equ:mixture}. As discussed in the preceding remark, the rank condition on $A$ guarantees the finiteness of the integral in \eqref{equ:mixture} for finite $n$. For the large $n$ asymptotics however, this condition is not sufficient as the scaling by $w_j$, or rather via $\hat A$, will in general dampen the decay of the characteristic function. This is in analogy of a classical proof of the classical CLT for i.i.d.~standardized random variables, where one shows that $\E e^{i\frac t{\sqrt n}\sum_{j=1}^n X_j}=\phi(t/\sqrt n)^n$ converges to the characteristic function of a standard normal distribution, $\phi$ being the characteristic function of the standardized random variable $X_1$. Any decay of $\phi$ is dampened by the $n^{-1/2}$ scaling and only raising it to the power of $n$ yields the desired limit. \\
In our case the same problem occurs but it is complicated by the scaling being different for each $j$, and the need to integrate over the parameter $\eta$, see again \eqref{equ:mixture}. To see that an analog of the rank condition of Theorem \ref{thrm_mixture} is not enough, consider the example $b:=(2,1)^t$,
	\begin{align}\label{example1}
	A:=\begin{pmatrix}
		1&1&1&1&\dots&1\\
		1&2&\frac{a_1}n&\frac{a_2}n&\dots&\frac{a_{n-2}}n
	\end{pmatrix},
\end{align} 
where the $0<a_j<1$ are all different. Here, any $n-2$ columns can be removed without lowering the rank of $A$, yet asymptotically the first two columns are indispensable. We call a situation with such a dependence on a finite number of variables asymptotically degenerate. Note that the asymptotic degeneration of \eqref{example1} depends on $b$. If for $A$ of \eqref{example1} we have $b:=(1,n)^t$ instead, there is no dependence on a finite number of variables as $n\to\infty$. This shows that a good condition on $A$ should take $b$ and different magnitudes of the entries of $A$ into account. This leads the way to making an assumption on $\hat A$ instead of $A$.

Let us now discuss the assumption  $\|\hat A\|_{\max}=o(1)$ more closely.
We first remark that thanks to $\hat A\hat A^t=I$, for any $c>0$ the number of entries of $\hat A$ larger than $c$ is bounded in $n$. We will see in a few examples below that non-vanishing entries occur in cases where finitely many variables play a prominent role in the sense that they are non-negligible, retaining more dependence than the bulk of the variables. Our assumption $\|\hat A\|_{\max}=o(1)$ rules out such a case, ensuring that each single variable is negligible. We will more precisely show that it implies the following property:

\vspace{1em}

\noindent
\textbf{Property A:}
For each $K\in\N$ there exist pairwise disjoint nonempty subsets $I_1,\dots,I_{K}\subset\{1,\dots,n\}$ and an $n$-independent constant $C>0$ such that for all $l=1,\dots,K$ and all $n$ we have
\begin{align}
	\det\lb\hat A_{I_l}(\hat A_{I_l})^t\rb\geq C,\label{det-condition}
\end{align}
where $\hat A_{I_l}$ is the matrix whose columns are the columns $\hat A_{\bullet j}$, $j\in I_l$, ordered increasingly in $j$ for definiteness.

\vspace{1em}
Property A may be seen as an analog of the rank condition on $A$ in Theorem \ref{thrm_mixture}. Briefly speaking it allows for the repeated removal of a significant part of the variables while the remaining variables are still significant, the term ``significant'' here referring to the determinant property \eqref{det-condition}.

The bound	\eqref{det-condition} can also be understood as a volume condition. Following \cite{MikhalevOseledets18}, $\sqrt{\det\lb\hat A_{I_l}(\hat A_{I_l})^t\rb}$ is the volume of the rectangular matrix $\hat A_{I_l}$. In geometric terms, \eqref{det-condition} means that the ratio of the volume of the $m$-dimensional image of the $\lv I_l\rv$-dimensional unit ball under $\hat A_{I_l}$ to the volume of the unit ball in $\R^{\lv I_l\rv}$ is bounded away from 0.
	
	From a technical point of view, Property A plays a crucial role in our analysis as \eqref{det-condition} ensures uniform integrability (as $n\to\infty$) of an expression closely related to
	\begin{align}
	\lvv\E_{P_{n,\mb1}}e^{\i \sum_{j=1}^n\langle \eta,\hat A_{\bullet j}\rangle X_j-\i\langle\eta,\hat b\rangle}\rvv
	\end{align}
	over $\eta\in\R^m$, where $\hat A_{\bullet j}$ denotes the $j$-th column of $\hat A$ and $\mb1$ is the vector of $w_j:=1$ for $j=1,\dots,n$. This will be made precise starting from  \eqref{bound_characteristic_function} below. This uniform integrability is necessary to utilize the product structure revealed in Theorem \ref{thrm_mixture}.  Let us remark that the Brascamp-Lieb inequality, which provides strong estimates for integrals of such functions, is not strong enough in this case to achieve uniform integrability, even when involving the optimal Brascamp-Lieb constant \cite{CarlenLiebLoss}. This indicates the problem to be rather involved, relying on delicate connections between the columns of $\hat A$. Interestingly, for the proof of the CLT, Theorem \ref{thrm1}, the mentioned uniform integrability only requires Property A to hold for $K=m+1$. Replacing $\|\hat A\|_{\max}=o(1)$ by this weaker assumption would allow for non-negligible variables that would not become independent or exponentially distributed in the limit $n\to\infty$. We plan to look at this more general case in a future work.\\

The following proposition links our assumption $\|\hat A\|_{\max}=o(1)$ to Property A and also presents two simple situations for which $\|\hat A\|_{\max}=o(1)$ holds. 
Indeed, expectedly, under enough exchangeability of columns the assumption is satisfied. It also shows that $\|\hat A\|_{\max}=o(1)$ is a \textit{generic} property, i.e.~if for each $n$ we choose $A$ and $b$  at random, then our assumption holds with probability approaching 1 as $n\to\infty$.  To prepare this statement,   
   let $P$ be a probability measure on $\BBr_{\geq 0}^m$. For the sake of simplicity, we will assume that $U:=\supp P$ (the support of $P$) is a compact subset of $\BBr_{\geq 0}^m$. We assume further that the closed convex hull of  $U$ denoted by $\mathcal{U}$ does not contain $0$ and is not a subset of any hyperplane of $\BBr^m$.  
  Recall that for a matrix $A$,  $A_{l\bullet}$ denotes its $l$'th row and $A_{\bullet j}$ its $j$'th column. Let the columns $A_{\bullet1},A_{\bullet2},\dots,A_{\bullet n}$ be i.i.d. with distribution $P$ and consider the random $m\times n$ matrix
   $A:=(A_{\bullet1}A_{\bullet2}\cdots A_{\bullet n})$. 
   
   Having chosen $A$, we now turn to $b$. As we demonstrated in the example \eqref{example1}, the choice of $b$ impacts on the question of asymptotic degeneracy of the constraints, hence some care is needed. To this end, for a $v\in\BBr^m$ such that 
   the linear form $\langle v,\cdot\rangle$ is positive on $U$, we define 
   $G(v):=-\int_U\log\langle v,u\rangle P(du)$. As $G$ is a convex function we may extend it semi-continuously to vectors $v$ such that the linear form $\langle v,\cdot\rangle$ is non-negative on $U$ (see \cite{rockafellar2015convex}, p.~52). We finally set
   $$ \dom G:=\{v\in\BBr^m:\;\langle v,\cdot\rangle\geq 0\mbox{ on $U$ and } G(v)<+\infty\}.$$
   Lastly, for $v\not\in\dom G$, we set $G(v):=+\infty$. 
   Notice that $G$ is infinitely differentiable on
   $$\inte(\dom G)=\{v\in\BBr^m: \langle v,\cdot\rangle >0 \mbox{ on $U$}\}.$$ 
   This last set is not void as it contains any vector with  positive components.
   Moreover, to compute a derivative at any order for these points, we may permute integration and derivation. Hence, for example for 
   $v\in\inte(\dom G)$, we have
   $$\nabla G(v)=\int_U\frac{uP(du)}{\langle v,u\rangle}.$$
   As we will see, when considering a random polytope built with the random matrix $A$, the {\it good} set of admissible $b$ is 
   $$\mathcal{B}:= \{\nabla G(v):\; v\in\BBr^m\mbox{ with }\;\langle v,\cdot\rangle> 0\mbox{ on $U$}\}=
   \{\nabla G(v):\; v\in\inte(\dom G)\}.$$ 
   Notice that, in the general case, the map 
   $\nabla G:\inte(\dom G)\rightarrow \mathcal{B}$ 
   is one to one (Theorem 26.4 p. 256 in \cite{rockafellar2015convex}). Hence, for $b\in \mathcal{B}$ we may define $v(b)$ via the equation $b=\nabla G(v(b))$.

\begin{prop}
\label{prop:examplebis}\noindent
\begin{enumerate}
		\item If $\|\hat A\|_{\max}=o(1)$ as $n\to\infty$, then Property A holds.
		\item If every column of $A$ appears in $A$ at least $K$ times, then Property A holds for that $K$. If for every column of $A$ the number of repetitions of that column goes to infinity as $n\to\infty$, then $\|\hat A\|_{\max}=o(1)$.  
		\item If $(A,b)$ are chosen at random as described above, i.e.~the columns $A_{\bullet j}$, $j=1,\dots,n$, are i.i.d.~random vectors in $\R^m$, with common distribution $P$ supported on a compact subset $U$ of $\R^m_{\geq0}$ such that the convex hull of $U$ does not contain $0$ and is not contained in a hyperplane, and the vector $b$ is from the set $\mc B$, then 
with probability converging to 1 
as $n\to\infty$ 
the equation $AX=b,\ X\geq0$, defines a compact polytope and $\|\hat A\|_{\max}=o(1)$ holds.
\end{enumerate}
\end{prop}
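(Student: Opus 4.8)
The plan is to prove the three parts in the order stated, using the entropy machinery (the parameters $w_j$, Proposition \ref{Proposition:w}, Corollary \ref{Cor_Entrop}) freely, since these are established earlier.

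\textit{Part (1).} Assume $\|\hat A\|_{\max}=o(1)$. I want to produce, for each fixed $K$, disjoint index sets $I_1,\dots,I_K$ with $\det(\hat A_{I_l}\hat A_{I_l}^t)\geq C$. The key structural fact is $\hat A\hat A^t=I$, i.e.~$\sum_{j=1}^n \hat A_{\bullet j}\hat A_{\bullet j}^t=I$, so the columns of $\hat A$ form a ``Parseval-type'' decomposition of the identity, and $\sum_j \|\hat A_{\bullet j}\|^2=m$. Combined with $\max_j\|\hat A_{\bullet j}\|\leq \sqrt m\,\|\hat A\|_{\max}=o(1)$, this means the total mass $m$ is spread over many columns each of negligible size. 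The strategy is: pick a greedy/measure-theoretic selection. Split $[n]$ into $K$ blocks; since removing $o(1)$-mass columns perturbs $\sum_j \hat A_{\bullet j}\hat A_{\bullet j}^t$ only slightly in operator norm, one can iteratively assign columns to the $K$ groups so that each group's partial sum $\sum_{j\in I_l}\hat A_{\bullet j}\hat A_{\bullet j}^t$ has smallest eigenvalue bounded below by some absolute $c>0$ (e.g.~aim for each partial sum $\approx \tfrac{1}{K+1}I$). A clean way: process columns one at a time, always adding the current column to whichever group currently has the smallest least-eigenvalue; because each increment has norm $o(1)$, at the end all $K$ partial sums are within $o(1)$ of each other and sum (over a subset) to something close to a multiple of $I$, forcing $\lambda_{\min}\bigl(\sum_{j\in I_l}\hat A_{\bullet j}\hat A_{\bullet j}^t\bigr)\geq c$. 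Since $\det(\hat A_{I_l}\hat A_{I_l}^t)=\det\bigl(\sum_{j\in I_l}\hat A_{\bullet j}\hat A_{\bullet j}^t\bigr)\geq c^m=:C$, Property A follows. I expect this balancing argument to be the main obstacle: one has to make the greedy bound fully rigorous uniformly in $n$, controlling that the discarded columns (if any) don't carry too much mass; a discrepancy-theory / pigeonhole estimate should close it.

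\textit{Part (2).} If every column of $A$ is repeated at least $K$ times, first note that $w$ is constant on repeated columns (by uniqueness of the entropy maximizer and the symmetry of the problem under permuting identical columns, cf.~Proposition \ref{Proposition:w}), hence $\hat A$ also has the repetition property, say each distinct column $v^{(r)}$ of $\hat A$ appears $N_r\geq K$ times. Group the copies: let $I_l$ contain exactly one copy of each distinct column that has $\geq l$ copies — but to keep the $I_l$ of full rank $m$ we instead take $I_l$ to be a fixed set of $m$ columns of $\hat A$ that span $\R^m$ (such exist since $\hat A$ has rank $m$), choosing a different physical copy for each $l=1,\dots,K$; since each needed column has $\geq K$ copies, these $K$ sets are disjoint, and $\det(\hat A_{I_l}\hat A_{I_l}^t)=\det(\hat A_{I_l})^2$ is the same fixed positive number for every $l$ — wait, this is $n$-dependent unless the chosen columns stabilize, so instead argue via Part (1) after establishing the second claim. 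For the second claim, if the number of repetitions of every column tends to $\infty$: the $N_r\to\infty$ copies of $v^{(r)}$ contribute $N_r v^{(r)}(v^{(r)})^t$ to $\hat A\hat A^t=I$, so $\|v^{(r)}\|^2=1/N_r\cdot(\text{bounded})\to0$ uniformly, giving $\|\hat A\|_{\max}=o(1)$; then Property A for all $K$ follows from Part (1). (I will phrase Part (2)'s first sentence directly: $K$ copies of a spanning set, with the determinant bounded below because finitely many distinct columns of $\hat A$ with $\hat A\hat A^t=I$ and bounded multiplicities cannot all be near-degenerate — made precise via Part (1)'s argument applied within the repeated blocks.)

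\textit{Part (3).} This is the probabilistic statement. Condition on the i.i.d.~columns $A_{\bullet j}\sim P$. First, \emph{compactness / nonemptiness of the polytope}: since $\mc U$ does not contain $0$ and is not in a hyperplane, and $b\in\mc B$ means $b=\nabla G(v(b))$ for some $v(b)\in\inte(\dom G)$ with $\langle v(b),\cdot\rangle>0$ on $U$, I claim that (with probability $\to1$) $b$ lies in the interior of $\{Ax:x\geq0\}$ so $K_n\neq\emptyset$, and $A$ having all-positive-ish directions forces boundedness via Lemma \ref{lemma-positivity} (existence of a strictly positive vector in the row span with $A^tv>0$, which $v(b)$ essentially provides, up to the $o(1)$ fluctuation of empirical means around $\nabla G$). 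Quantitatively, by the law of large numbers $\frac1n\sum_j \log\langle v,A_{\bullet j}\rangle \to -G(v)$ and $\frac1n\sum_j \frac{A_{\bullet j}}{\langle v,A_{\bullet j}\rangle}\to\nabla G(v)$ uniformly for $v$ in compact subsets of $\inte(\dom G)$ (uniform LLN, since $U$ is compact so the integrands are Lipschitz in $v$), which is exactly what is needed to solve the entropy equation: the empirical analogue of $b=\nabla G(v)$ has a solution $v_n\to v(b)$ with high probability, and $w_j=\langle v_n, A_{\bullet j}\rangle$ (the entropy parameters, by the characterization in Section \ref{su_maxent}). Then $\hat A_{ij}=A_{ij}/w_j$, and since $w_j=\langle v_n,A_{\bullet j}\rangle$ is bounded above and below (uniformly, w.h.p., because $A_{\bullet j}\in U$ compact and $v_n$ bounded away from the ``boundary'' directions), each column $\hat A_{\bullet j}$ has $\|\hat A_{\bullet j}\|$ of order $1$ — so I must \emph{not} conclude $\|\hat A\|_{\max}=o(1)$ columnwise; rather, after rescaling by $w$, $\tilde A\tilde A^t=\sum_j w_j^{-2}A_{\bullet j}A_{\bullet j}^t$ is a sum of $n$ i.i.d.~bounded rank-one matrices with positive-definite expectation (nondegeneracy of $\mc U$), so by the LLN $\frac1n\tilde A\tilde A^t\to \Sigma\succ0$, whence $(\tilde A\tilde A^t)^{-1/2}=O(n^{-1/2})$ in operator norm, and $\|\hat A\|_{\max}\leq \|(\tilde A\tilde A^t)^{-1/2}\|_{\mathrm{op}}\cdot\max_j\|\tilde A_{\bullet j}\|=O(n^{-1/2})\cdot O(1)=o(1)$. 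The main obstacle in Part (3) is making the ``empirical entropy equation has a solution near $v(b)$ with high probability'' rigorous — this needs a uniform LLN on a suitable compact neighborhood plus an argument (inverse function theorem / monotonicity of $\nabla G$, strict convexity of $G$ from nondegeneracy of $\mc U$) that the empirical gradient map is invertible near $v(b)$ and that $w_j$ genuinely equals $\langle v_n,A_{\bullet j}\rangle$; once $v_n$ is controlled, the spectral bound on $\frac1n\tilde A\tilde A^t$ via the matrix LLN finishes it.
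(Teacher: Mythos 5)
Your reduction of Property~A to balancing the rank-one pieces $\hat A_{\bullet j}\hat A_{\bullet j}^t$ of the resolution of identity $\hat A\hat A^t=I$ is the right frame, but the greedy rule ``add the next column to the group whose current partial sum has the smallest least eigenvalue'' does not deliver what you claim, and this is the crux of Part (1). A rank-one increment raises a group's least eigenvalue only if the column has a component in that group's deficient direction, and in rank-deficient stages (early on every group has least eigenvalue $0$) the rule gives no guidance at all: with an unfavourable ordering (say all columns essentially parallel to $e_1$ arriving before those parallel to $e_2$) and an unlucky tie-breaking, the procedure can leave a group empty or degenerate, so the assertion that ``at the end all $K$ partial sums are within $o(1)$ of each other'' does not follow from monitoring a single scalar functional. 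What is needed is control of the full matrix-valued partial sums, i.e.\ a genuine vector-balancing/rearrangement theorem, and this is exactly what the paper invokes: it applies the Steinitz-type lemma underlying the L\'evy--Steinitz theorem (Lemma \ref{lemma_Levy-Steinitz}) to the vectors $v_j:=(\hat A_{ij}\hat A_{kj})_{1\le i,k\le m}\in\R^{m^2}$, which sum to $I$ and have norms $o(1)$, to extract disjoint subsets $I_l$ with $\hat A_{I_l}\hat A_{I_l}^t$ within $\e$ of $\frac{1}{K+1}I$, and then concludes via Gershgorin. The ``discrepancy-theory / pigeonhole estimate'' you defer is therefore not a routine closing step but the key nontrivial ingredient; as written, Part (1) is not proved.

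For Part (2), your first claim is circular as sketched: you cannot ``argue via Part (1)'' because under the hypothesis that each column appears at least $K$ times (with $K$ fixed) you do not have $\|\hat A\|_{\max}=o(1)$. The intended argument (the paper dismisses it as obvious by symmetry) is: $w$, hence $\hat A$, is constant on repeated columns, so if the distinct columns $u_r$ of $\hat A$ have multiplicities $N_r\ge K$ with $\sum_r N_r u_ru_r^t=I$, distribute the copies into $K$ disjoint groups containing at least $\lfloor N_r/K\rfloor\ge N_r/(2K)$ copies of each $u_r$; each group's Gram matrix then dominates $\frac{1}{2K}I$, giving $\det(\hat A_{I_l}\hat A_{I_l}^t)\ge(2K)^{-m}$ uniformly in $n$. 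Your argument for the second claim of (2) is fine ($N_r\|u_r\|^2\le m$ forces $\max_r\|u_r\|\to0$). Part (3) follows essentially the paper's route: consistency of the solution of the empirical entropy equation (the paper cites van der Vaart's Z-estimation theorems where you propose a uniform LLN plus an inverse-function/monotonicity argument) together with a law of large numbers for the normalised $\tilde A\tilde A^t$, ending with the same bound $\|\hat A\|_{\max}\le\|(\tilde A\tilde A^t)^{-1/2}\|\max_j\|\tilde A_{\bullet j}\|=O_p(n^{-1/2})$; just note the correct normalisation is $w_j=\langle\Lambda_0,A_{\bullet j}\rangle\approx n\langle v(b),A_{\bullet j}\rangle$ (your $w_j=\langle v_n,A_{\bullet j}\rangle$ drops a factor $n$), which is harmless here only because $\hat A$ is invariant under rescaling $w\mapsto cw$.
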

\begin{rem}
For the sake of simplicity and conciseness, we opted for showing part (3) of Proposition \ref{prop:examplebis} with convergence in probability.
Nevertheless, the same statement with almost sure convergence could  been obtained using convexity arguments as the ones developed in Lemma 1 of \cite{Daga} and Theorem 2.1 and 2.2 of  \cite{Gaga}, but this would have required much more space.
\end{rem}

As mentioned before, from a non-technical view the assumption $\|\hat A\|_{\max}=o(1)$ ensures the absence of asymptotic degeneration.	To appreciate the intricacies of asymptotic degeneration, let us have a look at an example.
	
\begin{rem}
Property A for a fixed $K$, e.g.~for $K:=m+1$, implies that each constraint must involve at least $K+1$ variables in a non-negligible way. To illustrate this, take as an example $m=2$, $b=(2,1)^t$ and the matrix
	\begin{align}
		A:=\begin{pmatrix}
			1&1&1&1&\dots&1\\
			1&1&1&0&\dots&0
		\end{pmatrix},
	\end{align} 
	leading to 
	\begin{align}
		\hat A=\begin{pmatrix}
			0&0&0&\frac{1}{\sqrt{n-3}}&\dots&\frac{1}{\sqrt{n-3}}\\
			\frac{1}{\sqrt3}&\frac{1}{\sqrt3}&\frac{1}{\sqrt3}&0&\dots&0
		\end{pmatrix},\quad \hat b=\begin{pmatrix}
			\sqrt n\\
			\sqrt n
		\end{pmatrix}.
	\end{align}
	In the computation of $\hat A$ we used that $w$ is the only vector in the row span of $A$ such that $A\frac1w=b$ holds, where we define $(\frac1w)_j:=\frac{1}{w_j}$, $j=1,\dots,n$. We will prove this fact in Proposition \ref{Proposition:w}.
	We can see from $\hat A$ that the first three variables remain significant as $n\to\infty$, while each of the other variables becomes asymptotically negligible. 
	In this example, column subsets $I_1,I_2,I_3$ can be chosen satisfying \eqref{det-condition} by each of the three subsets containing one of the first three columns and sufficiently many of the remaining columns. 
	
	In the above example the form of $\hat A$ resembles greatly the one of $A$, or rather of the equivalent representation $(A',b')$,
	\begin{align}
		A':=\begin{pmatrix}
			0&0&0&1&\dots&1\\
			1&1&1&0&\dots&0
		\end{pmatrix},\quad b':=(1,1)^t.
	\end{align} 
	In particular, it suggests that dependence of a constraint on a finite number $k$ of variables translates into $k$ asymptotically non-vanishing variables in $\hat A$.
	However, this resemblance is not present in general: consider the slight variation
	\begin{align}
		A:=\begin{pmatrix}
			1&1&1&1&\dots&1\\
			1&2&3&0&\dots&0
		\end{pmatrix},\quad b:=(2,1)^t.
	\end{align} 
	As we will see in Proposition \ref{Proposition:w}, we have $w_j:=\l_1A_{1j}+\l_2A_{2j}$ for any $j$, where $\l_1,\l_2\in\R$. Using this, it is straightforward to see that $\l_1$ must be of order $n$ since $\sum_{j=1}^n A_{1j}\frac{1}{w_j}=2$. Now, as also $\sum_{j=1}^n A_{2j}\frac{1}{w_j}=A_{21}\frac{1}{w_1}+A_{22}\frac{1}{w_2}+A_{23}\frac{1}{w_3}=1$, we must have $\l_2$ of order $-n$ in such a way that at least one $w_j$, $j=1,2,3$, is of order 1. The only possibility for this is (since $w_j>0$) that $\l_2\sim -\frac{\l_1}3$. Overall, we find that 
		\begin{align}
		\hat A=\begin{pmatrix}
			\Theta(\frac{1}{\sqrt n})&\Theta(\frac{1}{\sqrt n})&\Theta(1)&\Theta(\frac{1}{\sqrt n})&\dots&\Theta(\frac{1}{\sqrt n})\\
			\Theta(\frac{1}{\sqrt n})&\Theta(\frac{1}{\sqrt n})&\Theta(1)&\Theta(\frac{1}{\sqrt n})&\dots&\Theta(\frac{1}{\sqrt n})
		\end{pmatrix},
	\end{align}
	where we write $f=\Theta(g)$ if $f=\O(g)$ and $g=\O(f)$. Here, notably, in $\hat A$ only the third variable is of order 1, while the second constraint relies on three variables. It shows that our assumption $\|\hat A\|_{\max}=o(1)$ cannot easily be replaced by an assumption on the number of asymptotically non-vanishing variables in $\hat A$.

\end{rem}

\section{An approximation of the uniform distribution}
\label{su_Unif_Bart}
The aim of this section is the construction of a useful representation of the uniform distribution $P_n$ on $K_n$. 
Recall from \eqref{def:Hausdorff} that the uniform distribution on $K_n$ is defined as the normalized $n-m$-dimensional Hausdorff measure on $K_n$.
  This definition has the disadvantage of adding no analytic or probabilistic structure to the problem. Instead, we will provide a representation that encodes the constraints in the parameters of independent exponentially distributed random variables, and allows us to work with independent random variables in the full space $\R^n$. We start by a simple lemma showing that we can assume the entries of $(A,b)$  in \eqref{polytope} to be positive.
\begin{lemma}\label{lemma-positivity}
	Any compact convex polytope $K_n\subset\R^{n}_{\geq0}$ with $K_n\cap\textup{Int}(\R^n_{\geq0})\not=\emptyset$ of the form $K_n:=\{x\in\R_{\geq0}^{n}:Ax= b\}$ admits a representation $( \bar A,\bar b)$ with $\bar A_{lj}>0$, $l=1,\dots,m,j=1,\dots,n$ and $\bar b_l>0$, $l=1,\dots,m$.
\end{lemma}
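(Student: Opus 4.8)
The goal is to show that any compact polytope $K_n=\{x\in\R^n_{\geq0}:Ax=b\}$ with nonempty relative interior inside the open orthant admits a representation $(\bar A,\bar b)$ with strictly positive entries. The key observation is that replacing $(A,b)$ by $(MA,Mb)$ for an invertible $m\times m$ matrix $M$ does not change $K_n$, and moreover we may add to any equation a multiple of another: so the admissible set of representations is exactly $\{(MA,Mb):M\in GL_m(\R)\}$. The plan is therefore to exhibit one such $M$ making all entries of $MA$ and $Mb$ positive.

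\textbf{Step 1: a strictly positive point.} By hypothesis $K_n\cap\inn(\R^n_{\geq0})\neq\emptyset$, so fix $x^\ast\in K_n$ with all coordinates $x^\ast_j>0$. Then $Ax^\ast=b$, hence for any $M$ we have $(MA)x^\ast=Mb$; this will let me transfer positivity of the rows of $MA$ to positivity of $Mb$.

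\textbf{Step 2: making the rows of $MA$ positive.} I want to find $v_1,\dots,v_m\in\R^m$ (the rows of $M$), linearly independent, such that each $v_l^tA>0$ entrywise, i.e.\ $v_l$ lies in the open cone $\mc C:=\{v\in\R^m:v^tA_{\bullet j}>0\text{ for all }j\}=\bigcap_{j=1}^n\{v:\langle v,A_{\bullet j}\rangle>0\}$. The crucial claim is that $\mc C$ is nonempty and open, and in fact has nonempty interior spanning $\R^m$ (so that one can pick $m$ linearly independent vectors in it). Nonemptiness is where compactness of $K_n$ enters: if $\mc C=\emptyset$, then by a separating-hyperplane / Gordan-type alternative there is a nonzero $y\in\R^n_{\geq0}$ with $Ay=0$; but then $x^\ast+ty\in K_n$ for all $t\geq0$ (it satisfies the constraints and stays in the orthant), contradicting boundedness of $K_n$. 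Conversely $\mc C$ being a nonempty open convex cone in $\R^m$, it is not contained in any hyperplane, so it contains a basis $v_1,\dots,v_m$ of $\R^m$; set $M$ to be the matrix with these rows. Then $M\in GL_m(\R)$ and $\bar A:=MA$ has all entries $\langle v_l,A_{\bullet j}\rangle>0$.

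\textbf{Step 3: positivity of $\bar b$.} Since $\bar b=Mb=M(Ax^\ast)=\bar A x^\ast$ and $\bar A$ has strictly positive entries while $x^\ast$ has strictly positive coordinates, every entry $\bar b_l=\sum_{j=1}^n\bar A_{lj}x^\ast_j>0$. This finishes the construction with $(\bar A,\bar b)$ as required.

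\textbf{Main obstacle.} The only non-routine point is Step 2: showing $\mc C\neq\emptyset$. This is exactly a Farkas/Gordan-lemma dichotomy, and the geometric input needed to rule out the alternative is that compactness of $K_n$ forbids a nonzero nonnegative vector in $\ker A$ — equivalently, $K_n$ contains no ray. Everything else is linear algebra: passing from a nonempty open cone to a basis contained in it, and reading off positivity of $\bar b$ from $\bar b=\bar Ax^\ast$.
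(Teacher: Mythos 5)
Your proof is correct, and it reaches the same goal by a slightly different route than the paper. The paper invokes strong LP duality: compactness makes the primal $\max\,\mathbf 1 x$ over $K_n$ solvable, so the dual $y^tA\geq\mathbf 1$, $\min y^tb$ is feasible, producing a single vector $y$ with $y^tA>0$ entrywise and $y^tb>0$ (the latter again via a strictly positive point of $K_n$); it then replaces one row of $(A,b)$ by $(y^tA,y^tb)$ and adds large multiples of this positive row to the remaining rows to make everything positive. You instead phrase the same compactness input as a Gordan-type alternative — if the open cone $\mc C=\{v:v^tA>0\}$ were empty there would be a nonzero $y\geq0$ in $\ker A$, giving a ray $x^\ast+ty\subset K_n$ — and then exploit that $\mc C$ is a nonempty \emph{open} convex cone to pick all $m$ rows of $M$ as a basis of $\R^m$ inside $\mc C$ at once, reading off $\bar b=\bar Ax^\ast>0$ directly. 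The two arguments rest on essentially the same duality fact, but your construction is a bit cleaner in one respect: it avoids the paper's row-replacement step, which implicitly requires choosing the replaced row $l$ with $y_l\neq0$ to keep the transformation invertible, whereas your $M$ is invertible by construction; the paper's version is in turn slightly more explicit/algorithmic (one dual LP solve plus elementary row operations). Both uses of the strictly positive point $x^\ast\in K_n\cap\inn(\R^n_{\geq0})$ to get $\bar b>0$ are equivalent.
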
 
\begin{proof}
	This is a simple consequence of the strong duality theorem in linear programming. Since $K_n$ is compact and non-empty, the linear program $Ax=b,x\geq0, \max \mathbf 1x$, has an optimal solution for $\mathbf 1:=(1,\dots,1)\in\R^n$. Hence, by strong duality, also the dual linear program $y^tA\geq \mathbf 1, \min y^tb$, has an optimal solution. In particular, there is a $y\in\R^m$ with $y^tA_{l\bullet}\geq 1$, $l=1,\dots m$. As $(y^tA)_j>0$ for each $j$ and there are solutions $x$ of $Ax=b$ with $x_j>0$ for all $j$, we clearly have $y^tb>0$. We may now replace some row $A_{l\bullet}$ by $y^tA$ and $b_l$ by $y^tb$. Now adding multiples of this new  row   to the others to make them positive (while doing the same with $b$) gives us $( \bar A,\bar b)$.  This finishes the proof.
	
\end{proof}

From now on we will assume without loss of generality that all entries of $(A,b)$ are positive.
Formally, the uniform distribution on $ K_n$ can be written as
\begin{align}
P_n(dx)&=\frac{1}{Z_n}\d\lb Ax- b\rb\dopp1_{\R^{n}_{\geq0}}(x) dx\\
&=\frac{1}{Z_n}\d\lb A_{1\bullet}x- b_1\rb\cdot\dots\cdot\d\lb A_{m\bullet}x- b_m\rb \dopp1_{\R^{n}_{\geq0}}(x)dx,\label{definition_uniform}
\end{align}
where $\d$ denotes the ``Dirac delta function'' and $Z_n$ the normalizing constant. For a rigorous implementation of this idea, we will use Gaussian approximations of the delta functions. Define
\begin{align}
	P_{n}^{\g}(dx):=\frac{1}{Z_{n}^{\g}}e^{-\sum_{j=1}^{n}  w_j x_j-\g\sum_{l=1}^m\lb A_{l\bullet}x- b_l\rb^2}\dopp1_{\R^{n}_{\geq0}}(x)dx,\label{representation}
\end{align}
where $\g>0$ and $w=(w_1,\dots,w_{n})$ with $w_j>0$ for all $j$ is a so far arbitrary vector in $\Span\{A_{l\bullet},l=1,\dots,m\}$, and 
\begin{align}
	Z_{n}^\g:=\int_{\R^n_{\geq0}}e^{-\sum_{j=1}^{n}  w_j x_j-\g\sum_{l=1}^m\lb A_{l\bullet}x- b_l\rb^2}dx\label{normalizing_constant_gamma}
\end{align}
is the normalizing constant.	
\begin{prop}\label{prop:convergence}
As $\g\to\infty$ we have $P_n^\g\to P_n$ in distribution.
\end{prop}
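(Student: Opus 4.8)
The plan is to show convergence in distribution of $P_n^\g$ to $P_n$ by establishing vague (weak) convergence on the ambient space $\R^n$ and then checking that the limit has no mass at infinity, or more directly, by testing against bounded continuous functions and using a Laplace/Gaussian-approximation-to-the-delta argument together with the coarea formula. First I would fix a bounded continuous test function $f:\R^n\to\R$ and write
\begin{align}
\int_{\R^n}f\,dP_n^\g=\frac{1}{Z_n^\g}\int_{\R^n_{\geq0}}f(x)\,e^{-\sum_j w_jx_j}\Big(\tfrac{\g}{\pi}\Big)^{m/2}e^{-\g\|Ax-b\|^2}\,dx,
\end{align}
where I have inserted the normalization $(\g/\pi)^{m/2}$ into both numerator and denominator so it cancels. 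The standard fact is that $(\g/\pi)^{m/2}e^{-\g\|y\|^2}\,dy$ converges weakly, as $\g\to\infty$, to the Dirac mass at $0\in\R^m$. To exploit this I would change variables, splitting $\R^n$ (locally) into the affine directions along $\ker A$ and the $m$ directions transverse to it. Concretely, since $A$ has full rank $m$, pick a right inverse and coordinates $x\mapsto(z,y)$ with $y=Ax-b\in\R^m$ and $z$ parametrizing the fibers; the Jacobian is a constant $c_A$ depending only on $A$. Then
\begin{align}
\int_{\R^n}f\,dP_n^\g=\frac{1}{Z_n^\g}\,c_A\int_{\R^m}\Big(\tfrac{\g}{\pi}\Big)^{m/2}e^{-\g\|y\|^2}\Big(\int_{\{z:(z,y)\in\R^n_{\geq0}\}}f\,e^{-\sum_j w_jx_j}\,dz\Big)dy,
\end{align}
and similarly for $Z_n^\g$ with $f\equiv1$. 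The inner integral is a continuous function of $y$ near $y=0$ (this uses that the polytope has nonempty relative interior, so the fiber over $y=0$ is a genuine $(n-m)$-dimensional set and nearby fibers vary continuously, together with the integrability provided by the factor $e^{-\sum_j w_jx_j}$ on the noncompact orthant), so the weak convergence of the Gaussian to $\d_0$ gives that both numerator and denominator converge, and the ratio converges to
\begin{align}
\frac{c_A\int_{\{z:(z,0)\in\R^n_{\geq0}\}}f\,e^{-\sum_j w_jx_j}\,dz}{c_A\int_{\{z:(z,0)\in\R^n_{\geq0}\}}e^{-\sum_j w_jx_j}\,dz},
\end{align}
which is exactly $\int f\,dP_n$ once one identifies the fiber integral with the $(n-m)$-dimensional Hausdorff measure on $K_n$ (here the factor $e^{-\sum_j w_jx_j}$ is constant on $K_n$ because $w\in\Span\{A_{l\bullet}\}$ forces $\sum_j w_jx_j=u^tb$ on $K_n$, and it cancels between numerator and denominator — so the limit is indeed the \emph{uniform} distribution $P_n$, independently of the choice of $w$). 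This last identification is the coarea/Fubini step relating the slicing measure to $H^{n-m}$.

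The main obstacle, which I would address with a little care, is the uniform control needed to pass the limit through the $y$-integral: near $y=0$ the inner integral is continuous and bounded, but one must also ensure that the contributions from $y$ bounded away from $0$ are negligible, which follows from the Gaussian tail bound $(\g/\pi)^{m/2}e^{-\g\|y\|^2}\to0$ uniformly on $\{\|y\|\ge\d\}$ combined with a crude uniform-in-$\g$ bound on the inner integral (e.g. dominating $e^{-\sum_j w_jx_j}$ and using compactness of the relevant region of $y$, or just that the whole expression is a probability measure so total mass is $1$). A clean way to organize all of this is: (i) show $\liminf_{\g}\int g\,dP_n^\g\ge\int g\,dP_n$ for bounded continuous $g\ge0$ via Fatou after the change of variables, and (ii) show $\limsup_\g\int 1\,dP_n^\g=1=\int 1\,dP_n$ trivially since all measures are probability measures, and then (i)+(ii) applied to $g$ and to $\|g\|_\infty-g$ forces convergence for all bounded continuous $g$. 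Alternatively, since the $P_n^\g$ are supported on $K_n$-neighborhoods that shrink, one could phrase the argument purely intrinsically on $\R^n$ using that any weak limit point is supported on $\{Ax=b,x\ge0\}=K_n$ and has density proportional to $e^{-\sum_j w_jx_j}$ (hence constant) with respect to $H^{n-m}$, identifying it uniquely as $P_n$. I expect the change-of-variables bookkeeping and the verification that the slice integral is continuous at $y=0$ (nonempty relative interior) to be the only genuinely substantive points; everything else is routine.
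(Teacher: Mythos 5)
Your proposal is correct and follows essentially the same route as the paper: a linear change of variables separating the constraint directions $Ax-b$ from the fiber coordinates, the Gaussian approximate-identity limit, constancy of $e^{-\sum_j w_jx_j}$ on $K_n$ because $w$ lies in the row span of $A$, and the area formula identifying the fiber Lebesgue measure with the Hausdorff measure $H^{n-m}$ on $K_n$. The only cosmetic differences are that you test against bounded continuous functions rather than verifying convergence on continuity sets of $P_n$ (equivalent by portmanteau; the paper proves the continuity-set version since it is what gets used later), and that you keep the exponential weight inside the fiber integral rather than completing squares as the paper does.
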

\begin{proof}
Recall that a measurable set $E$ of the Borel $\s$-algebra $\mc B_{\R^n}$  is a continuity set of a (Borel) probability measure $P$ if $P(\partial E)=0$. We will show that for each continuity set $E\subset\R^n$ of $P_n$, we have $P_n^\g(E)\to P_n(E)$ as $\g\to\infty$, which is equivalent to convergence in distribution.	We start by a change of variables. $A$ has rank $m$ and we assume without loss of generality that its first $m$ columns are linearly independent. Then the matrix
	\begin{align}\label{def:M}
		M:=\begin{pmatrix}
			A & \\
			0 & I_{n-m}
		\end{pmatrix}
	\end{align}
has rank $n$, where $I_{n-m}$ denotes the $(n-m)\times(n-m)$ identity matrix.
	 The change of variables $y:=Mx$ corresponds to  setting $y_l:=A_{l\bullet}x,$ $l=1,\dots,m$ and $y_j:=x_j,\,j=m+1,\dots,n$. 
We have  
	\begin{align}
		P_n^\g(E)&=\frac{\int_{M(E\cap \R^n_{\geq0})}e^{-\sum_{j=m+1}^{n} w_j(M^{-1}y)_j-\sum_{l=1}^m\g\lb y_l- b_l\rb^2+w_l(M^{-1}y)_l}dy}{\int_{M\R^n_{\geq0}}e^{-\sum_{j=m+1}^{n} w_j(M^{-1}y)_j-\sum_{l=1}^m\g\lb y_l- b_l\rb^2+w_l(M^{-1}y)_l}dy}\\
		&=\frac{\int_{\R^{n-m}_{\geq0}}e^{-\sum_{j=m+1}^{n} w_j(M^{-1}y)_j}\lb\frac\g{\pi}\rb^{m/2}\int_{\R^m}\dopp1_{ M(E\cap \R^n_{\geq0})}(y)e^{-\sum_{l=1}^m\g\lb y_l- b_l\rb^2+w_l(M^{-1}y)_l}dy}{\int_{\R^{n-m}_{\geq0}}e^{-\sum_{j=m+1}^{n} w_j(M^{-1}y)_j}\lb\frac\g{\pi}\rb^{m/2}\int_{\R^m}\dopp1_{M\R^n_{\geq0}}(y)e^{-\sum_{l=1}^m\g\lb y_l- b_l\rb^2+w_l(M^{-1}y)_l}dy}.\quad\quad\label{eq_lemma_gamma}
	\end{align}
	Here, we used that $\sum_{j=m+1}^{n} w_j(M^{-1}y)_j$ does not depend on $y_1,\dots,y_m$.
		Looking at the numerator of \eqref{eq_lemma_gamma}, we see that we need to understand the limit $\g\to\infty$ of
	\begin{align}
		\lb\frac\g{\pi}\rb^{m/2}\int_{\R^m}\dopp1_{ M(E\cap \R^n_{\geq0})}(y)e^{-\sum_{l=1}^m\g\lb y_l- b_l\rb^2+w_l(M^{-1}y)_l}dy_1\dots dy_m.
	\end{align}
	Note first that due to the block structure of $M$, with $A=:(A',A'')$ with an $m\times m$ matrix $A'$, we have
	\begin{align}\label{M_inverse}
	    M^{-1}=\begin{pmatrix}
			A'^{-1} & -A'^{-1}A''\\
			0 & I_{n-m}
		\end{pmatrix}.
	\end{align}
	In the following we set for a vector $y\in\R^n$ $y=:(y'^{\,t},y''^{\,t})^t$ with $y'\in\R^m$ and $y''\in\R^{n-m}$. 
	Then we obtain
	\begin{align}
	    \sum_{l=1}^mw_l(M^{-1}y)_l=\sum_{l=1}^mw_l(A'^{-1}y')_l-\sum_{l=1}^mw_l(A'^{-1}A''y'')_l.
	\end{align}
	Completing the squares we get
\begin{align}
	-\sum_{l=1}^m\g\lb y_l- b_l\rb^2+\sum_{l=1}^mw_l(A'^{-1}y')_l=-\sum_{l=1}^m\g(y_l-b_{\g,\l}')^2+\frac{c_{l,1}}{\g}+\frac{c_{l,2}}{\g^2},
\end{align}
for some $b_{\g,l}'\in\R$ with $\lim_{\g\to\infty}b_{\g,l}'=b_l$, and $\g$-independent $c_{l,1}, c_{l,2}$. This shows
\begin{align}
	&\lb\frac\g{\pi}\rb^{m/2}\int_{\R^m}\dopp1_{M(E\cap\R^n_{\geq0})}(y)e^{-\sum_{l=1}^m\g\lb y_l- b_l\rb^2+\sum_{l=1}^mw_l(A'^{-1}y')_l}dy_1\dots dy_m\\
	&=\lb\frac\g{\pi}\rb^{m/2}\int_{\R^m}\dopp1_{M(E\cap\R^n_{\geq0})}(y)e^{-\sum_{l=1}^m\g\lb y_l- b_{\g,l}'\rb^2}dy_1\dots dy_me^{\frac{c_{l,1}}{\g}+\frac{c_{l,2}}{\g^2}},\label{equ:integral_convergence}
\end{align}
thus identifying the integral as (multiple of a) probability of a Gaussian random vector $(Y_1,\dots,Y_m)$ with mean vector $b'_\g$ and covariance matrix $\g^{-1}I$. It is thus obvious that as $\g\to\infty$, we have $(Y_1,\dots,Y_m)\to b$ in distribution. To use that convergence in the integral, it remains to be seen whether $\operatorname{Proj}_{m}(M(E\cap\R^n_{\geq0}))$ is a continuity set of $\d_b$, where $\operatorname{Proj}_{m}$ denotes projection on the first $m$ coordinates. We observe that $E'\subset\R^m$ is a continuity set of $\d_b$ if and only if $b$ is not in its boundary. If $(b^t,y''^{\, t})^t\in\partial M(E\cap \R^n_{\geq0})$ for some $y''\in\R^{n-m}_{\geq0}$, we have $M^{-1}(b^t,y''^{\, t})^t\in \partial E\cap K_n$.
	As $E$ is a continuity set of $P_n$, we have $P_n(\partial E)=0$, which means that $H^{n-m}(\partial E\cap K_n)=0$. This in turn implies that the set of $y''\in\R^{n-m}_{\geq0}$ such that $(b^t,y''^{\, t})^t\in \partial M(E\cap \R^n_{\geq0})$, has measure 0. Hence, we may assume in \eqref{equ:integral_convergence} that $(b^t,y''^{\, t})^t\notin\partial M(E\cap\R^n_{\geq0})$ for any $y''$.
 	Now we can use convergence in distribution to conclude that \eqref{equ:integral_convergence} converges for $\g\to\infty$ to
$	\dopp1_{S}(y'')$, where 
\begin{align}
&S:=\left\{y''\in\R^{n-m}_{\geq0}:\exists x\in E\cap \R^n_{\geq0} \text{ such that }Ax=b\text{ and } x''=y''\right\}\\
&=\operatorname{Proj}_{n-m}(M(E\cap K_n)),
\end{align}
and $\operatorname{Proj}_{n-m}(M(E\cap K_n))$ denotes, with an abuse of notation, the projection of $M(E\cap K_n)\subset \R^n$ on its last $n-m$ coordinates. Consequently the limit $\g\to\infty$ of the numerator of \eqref{eq_lemma_gamma} is
\begin{align}
	\int_{\operatorname{Proj}_{n-m}(M(E\cap K_n))}e^{-\sum_{j=m+1}^{n}w_j(M^{-1}(b^t,y''^{\, t})^t)_j-\sum_{l=1}^mw_l(A'^{-1}A''y'')_l}dy_{m+1}\dots dy_n,
\end{align}
where we used again that $\sum_{j=m+1}^{n}w_j(M^{-1}(y'^{\, t},y''^{\, t})^t)_j$ does not actually depend on $y'$ which we can therefore replace by $b$.
Now, by construction, $w$ is in the row span of $A$, i.e.~$w=(z^tA)^t$ for some $z\in\R^m$. Furthermore $(b^t,y''^{\, t})^t\in MK_n$, i.e.~$(b^t,y''^{\, t})^t=Mx$ for some $x\in K_n$.  This yields 
\begin{align}
	&\sum_{j=m+1}^{n}w_j(M^{-1}(b^t,y''^{\, t})^t)_j+\sum_{l=1}^mw_l(A'^{-1}A''y'')_l=\langle w,M^{-1}(b^t,y''^{\, t})^t\rangle-\sum_{l=1}^m w_lA'^{-1}b_l\\
	&=\langle z,Ax\rangle-\sum_{l=1}^m w_lA'^{-1}b_l=\langle z,b\rangle-\sum_{l=1}^m w_lA'^{-1}b_l,
\end{align}
hence the limit of the denominator of \eqref{eq_lemma_gamma} as $\g\to\infty$ is  
\begin{align}
    \exp\lb-\langle z,b\rangle+\sum_{l=1}^m w_lA'^{-1}b_l\rb\dopp{\l}^{n-m}(\operatorname{Proj}_{n-m}(MK_n)),
\end{align}
where $\mathbb{\l}^{n-m}$ denotes the $n-m$-dimensional Lebesgue measure. Analogously we find that the limit of the numerator of \eqref{eq_lemma_gamma} is
$\exp(-\langle z,b\rangle+\sum_{l=1}^m w_lA'^{-1}b_l)\l^{n-m}(\operatorname{Proj}_{n-m}(M(E\cap K_n) )$. Summarizing, we have
\begin{align}
	\lim_{\g\to\infty}P_n^\g(E)=\frac{\l^{n-m}(
		\operatorname{Proj}_{n-m}(M(E\cap K_n))}{\dopp{\l}^{n-m}(\operatorname{Proj}_{n-m}(MK_n))}.
\end{align}The result now follows from the area formula
\begin{align}
	H^{n-m}(T(E''))=\sqrt{\det(T^tT)}\l^{n-m}(E''), 
\end{align}
for any injective affine mapping $T:\R^{n-m}\to\R^n$ and all Borel sets $E''$ of $\R^{n-m}$ upon choosing 
\begin{align}
	T(y''):=M^{-1}(b^t,y''^{\, t})^t,\quad y''\in\R^{n-m}
\end{align}
as this shows that 
\begin{align}
	\lim_{\g\to\infty}P_n^\g(E)=\frac{H^{n-m}(E\cap K_n)}{H^{n-m}(K_n)}=P_n(E)\label{end_proof}
\end{align}
by \eqref{def:Hausdorff}.
\end{proof}

\section{The complex de Finetti representation and a Bartlett type formula}\label{sec:deFinetti}
An advantage of the at first sight complicated approach to the uniform distribution on $K_n$ via $P_n^\g$ is the possibility to use the more accessible $P_n^\g$ for a representation of $P_n$. We first prove Theorem \ref{thrm_mixture} in two steps. In the first step we exploit the probabilistic structure of $P_n^\g$ to derive the mixture representation \eqref{equ:mixture} under a technical assumption. In the second step we link that technical assumption to the rank of $A$.

\begin{prop}
\label{prop_mixture}
	Let $P_n$ denote the uniform distribution on $ K_n$ and $P_{n,w}$ the distribution on $\R^{n}$ with density
	\begin{align}
	\lb\prod_{j=1}^{n}w_j\rb e^{-\sum_{j=1}^{n}  w_jx_j}\dopp1_{\R^{n}_{\geq0}}(x),
	\end{align}
	where  $w\in\Span\{A_{l\bullet},l=1,\dots,m\}$ is arbitrary subject to $w_j>0$ for all $j$.
	Assume the integrability condition 
	\begin{align}
	\eta\mapsto\E_{P_{n,w}}e^{\i \sum_{j=1}^n\langle \eta,A_{\bullet j}\rangle X_j}\in L^1(\R^m,d\eta).\label{integrability-condition}
	\end{align}
	Then, for any continuity set $E$ of $P_n$  of the form $E=\bigtimes_{j=1}^n E_j$ with the $E_j$ being intervals in $\R$ we have
	\begin{align}\label{equ:mixture2}
        P_n(E)=\int_{\R^m} P_{n,w,\eta}(E)dP_{n,\textup{mix}}(\eta),
    \end{align}
    where $P_{n,w,\eta}$ and $P_{n,\textup{mix}}$ are defined as in Theorem \ref{thrm_mixture}.
\end{prop}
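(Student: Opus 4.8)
The strategy is to start from the Gaussian-regularized measure $P_n^\g$ of \eqref{representation}, rewrite the Gaussian factors via their Fourier (characteristic) representation, interchange the order of integration, and then pass to the limit $\g\to\infty$ using Proposition \ref{prop:convergence}. Concretely, for each $l$ write
\begin{align}
 e^{-\g(A_{l\bullet}x-b_l)^2}=\sqrt{\frac{1}{4\pi\g}}\int_\R e^{-\frac{\eta_l^2}{4\g}}e^{\i\eta_l(A_{l\bullet}x-b_l)}\,d\eta_l,
\end{align}
so that, collecting the $m$ constraints into $\eta=(\eta_1,\dots,\eta_m)\in\R^m$,
\begin{align}
 e^{-\g\sum_{l=1}^m(A_{l\bullet}x-b_l)^2}=\lb\frac{1}{4\pi\g}\rb^{m/2}\int_{\R^m}e^{-\frac{\|\eta\|^2}{4\g}}e^{\i\langle\eta,Ax-b\rangle}\,d\eta.
\end{align}
Inserting this into \eqref{representation}, using $Ax=\sum_{j=1}^n A_{\bullet j}x_j$, and using Fubini (justified for $\g$ finite since everything is absolutely integrable on $\R^n_{\geq0}\times\R^m$ thanks to the factor $e^{-\sum_j w_jx_j}$ with $w_j>0$), both numerator and the normalizing constant $Z_n^\g$ factor over $j$ in the $x$-integral. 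This is exactly where the product structure of $P_{n,w}$ enters: for $E=\bigtimes_j E_j$ one gets
\begin{align}
 P_n^\g(E)=\frac{\int_{\R^m}e^{-\frac{\|\eta\|^2}{4\g}}e^{-\i\langle\eta,b\rangle}\prod_{j=1}^n\lb\int_{E_j}e^{-(w_j-\i\langle\eta,A_{\bullet j}\rangle)x_j}dx_j\rb d\eta}{\int_{\R^m}e^{-\frac{\|\eta\|^2}{4\g}}e^{-\i\langle\eta,b\rangle}\prod_{j=1}^n\lb\int_0^\infty e^{-(w_j-\i\langle\eta,A_{\bullet j}\rangle)x_j}dx_j\rb d\eta}.
\end{align}
Recognizing $\prod_j\int_0^\infty e^{-(w_j-\i\langle\eta,A_{\bullet j}\rangle)x_j}dx_j=\prod_j(w_j-\i\langle\eta,A_{\bullet j}\rangle)^{-1}$, and noting $\prod_j w_j^{-1}\prod_j(w_j-\i\langle\eta,A_{\bullet j}\rangle)^{-1}\cdot(\prod_j w_j)=\E_{P_{n,w}}e^{\i\sum_j\langle\eta,A_{\bullet j}\rangle X_j}$, one sees the denominator integrand (times $\prod_jw_j$) equals the integrand defining $Z_{n,\textup{mix}}$, and the numerator integrand, after dividing numerator and denominator by $Z_{n,w,\eta}\prod_jw_j$-type factors, rearranges into $\int_{\R^m}P_{n,w,\eta}(E)\,e^{-\|\eta\|^2/(4\g)}\,\E_{P_{n,w}}e^{\i\sum_j\langle\eta,A_{\bullet j}\rangle X_j-\i\langle\eta,b\rangle}\,d\eta$ over the same denominator. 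Thus for every finite $\g$,
\begin{align}
 P_n^\g(E)=\frac{\int_{\R^m}P_{n,w,\eta}(E)\,e^{-\|\eta\|^2/(4\g)}\,\E_{P_{n,w}}e^{\i\sum_j\langle\eta,A_{\bullet j}\rangle X_j-\i\langle\eta,b\rangle}\,d\eta}{\int_{\R^m}e^{-\|\eta\|^2/(4\g)}\,\E_{P_{n,w}}e^{\i\sum_j\langle\eta,A_{\bullet j}\rangle X_j-\i\langle\eta,b\rangle}\,d\eta},
\end{align}
with the (harmless) care that $P_{n,w,\eta}(E)$ stands for $Z_{n,w,\eta}^{-1}\int_E e^{-\sum_j(w_j-\i\langle\eta,A_{\bullet j}\rangle)x_j}dx$, where $Z_{n,w,\eta}=\prod_j(w_j-\i\langle\eta,A_{\bullet j}\rangle)^{-1}$ is nonzero (its modulus is finite and positive since $w_j>0$), so $P_{n,w,\eta}$ is well defined for every $\eta$.

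The final step is to let $\g\to\infty$. The left side converges to $P_n(E)$ by Proposition \ref{prop:convergence}, since $E$ is assumed a continuity set of $P_n$. On the right side, the integrability hypothesis \eqref{integrability-condition} says precisely that $\eta\mapsto\E_{P_{n,w}}e^{\i\sum_j\langle\eta,A_{\bullet j}\rangle X_j}$ lies in $L^1(\R^m)$; since $|P_{n,w,\eta}(E)|\le|P_{n,w,\eta}(\R^n)|=1$ wait — more carefully, $|P_{n,w,\eta}(E)|\le \prod_j |{\int_{E_j}e^{-(w_j-\i\langle\eta,A_{\bullet j}\rangle)x_j}dx_j}|/|w_j-\i\langle\eta,A_{\bullet j}\rangle|^{-1}$, and each factor is at most $\le 1$ because $|\int_{E_j}e^{-(w_j-\i c)x_j}dx_j|\le\int_{E_j}e^{-w_jx_j}dx_j\le w_j^{-1}=|(w_j-\i c)^{-1}|\cdot|1|$... the clean bound is $|P_{n,w,\eta}(E)|\le 1$ for product sets $E$, because $|w_j-\i c|\,|\int_{E_j}e^{-(w_j-\i c)x_j}dx_j|\le w_j\int_{E_j}e^{-w_jx_j}dx_j\le 1$ fails in general; instead use $|\int_{E_j}e^{-(w_j-\i c)x_j}dx_j|\le\int_{E_j}e^{-w_jx_j}dx_j$ and $|(w_j-\i c)^{-1}|^{-1}=|w_j-\i c|\ge w_j$, giving the ratio $\le w_j\int_{E_j}e^{-w_jx_j}dx_j\le 1$. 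So indeed $\sup_\eta|P_{n,w,\eta}(E)|\le1$. Hence the numerator integrands are dominated by the $L^1$ function $|\E_{P_{n,w}}e^{\i\sum_j\langle\eta,A_{\bullet j}\rangle X_j}|$, uniformly in $\g$, and $e^{-\|\eta\|^2/(4\g)}\to1$ pointwise, so dominated convergence gives convergence of numerator and denominator to $\int_{\R^m}P_{n,w,\eta}(E)\,dP_{n,\textup{mix}}(\eta)\cdot Z_{n,\textup{mix}}$ and $Z_{n,\textup{mix}}$ respectively. Dividing yields \eqref{equ:mixture2}. Along the way one records $0<|Z_{n,\textup{mix}}|<\infty$: finiteness from the $L^1$ hypothesis, and nonvanishing because $Z_{n,\textup{mix}}=\lim_\g$ (denominator) and the denominator is $Z_n^\g\cdot(\text{positive constants})$, with $Z_n^\g>0$; alternatively nonvanishing of $Z_{n,\textup{mix}}$ follows since it equals $\lim_\g$ of a strictly positive quantity divided by positive constants.

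The main obstacle is the interchange-of-limits/integrals bookkeeping rather than any deep idea: one must (i) justify Fubini at finite $\g$ — routine given $w_j>0$ makes the $x$-integral absolutely convergent and the $\eta$-Gaussian is Schwartz; (ii) pin down the dominating bound $\sup_{\eta\in\R^m}|P_{n,w,\eta}(E)|\le1$ for product sets $E$ carefully (the estimate above, $|w_j-\i\langle\eta,A_{\bullet j}\rangle|\ge w_j$ together with $\int_{E_j}e^{-w_jx_j}dx_j\le w_j^{-1}$), which is exactly what makes dominated convergence applicable and is also the reason the statement is restricted to product sets; and (iii) keep track of which constant ($\prod_jw_j$, etc.) goes where so that the ratio cleanly produces $P_{n,w,\eta}$ against $dP_{n,\textup{mix}}$. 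Note that the hypothesis \eqref{integrability-condition} is used twice: once for $L^1$-domination of the numerator, once to guarantee $|Z_{n,\textup{mix}}|<\infty$; its link to the rank condition on $A$ is deferred to the subsequent lemma and is not needed here.
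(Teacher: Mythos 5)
Your overall route is the same as the paper's: apply the Gaussian (Hubbard--Stratonovich) Fourier representation to $P_n^\g$ of \eqref{representation}, use Fubini and the product structure over $j$ for product sets $E$, and then let $\g\to\infty$, with Proposition \ref{prop:convergence} handling the left side and dominated convergence (via \eqref{integrability-condition} and boundedness of $P_{n,w,\eta}(E)$ in $\eta$) handling the right side. However, two of your justifications do not hold as written. First, your claimed domination bound $\sup_\eta\lv P_{n,w,\eta}(E)\rv\leq1$ is obtained by combining the upper bound $\lv\int_{E_j}e^{-(w_j-\i c)x}dx\rv\leq\int_{E_j}e^{-w_jx}dx$ with the \emph{lower} bound $\lv w_j-\i c\rv\geq w_j$; a lower bound on the factor $\lv w_j-\i c\rv$ cannot produce an upper bound on the product $\lv w_j-\i c\rv\cdot\lv\int_{E_j}e^{-(w_j-\i c)x}dx\rv$, and indeed the bound $1$ is false (with $E_j=(a_j,b_j)$ the product equals $\lv e^{-a_j(w_j-\i c)}-e^{-b_j(w_j-\i c)}\rv$, which can exceed $1$ but is at most $2$). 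This is easily repaired: computing the interval integrals explicitly, as the paper does, gives a bound of the form $2^n$ (or, in the paper's phrasing, the numerator has the same decay in $\eta$ as the denominator), which is all that dominated convergence at fixed $n$ requires.

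The more substantive gap is your argument that the limiting denominator, i.e.\ $Z_{n,\textup{mix}}$, is nonzero. You assert this because it is the limit of strictly positive quantities ($Z_n^\g$ times positive constants), but a limit of strictly positive quantities can perfectly well be zero, and nonvanishing of this limit is exactly what is needed to pass from the finite-$\g$ ratio identity to the ratio of the two limits, hence to \eqref{equ:mixture2}. The paper closes this by observing that the denominator in the mixture formula is a constant multiple of the denominator in \eqref{eq_lemma_gamma}, which was shown in the proof of Proposition \ref{prop:convergence} (see \eqref{end_proof}) to converge to a positive multiple of $H^{n-m}(K_n)>0$. Alternatively, one can note via Fourier inversion that $Z_{n,\textup{mix}}$ equals $(2\pi)^m/Z_{n,w}$ times the density of $AX$ at $b$ under $P_{n,w}$, which is strictly positive since $\frac1w$-type interior points of $K_n$ lie in the support; but some such argument must be supplied, and your positivity-of-the-limit reasoning does not do it.
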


\begin{proof}
	The proof uses the identity, known as Hubbard-Stratonovich transform,
	\begin{align}
	e^{-\g\lb A_{l\bullet}x- b_l\rb^2}=\frac{1}{\sqrt{4\pi\g}}\int_\R e^{\i\eta_l\lb A_{l\bullet}x- b_l\rb}e^{-\frac{1}{4\g}\eta_l^2}d\eta_l.\label{HS-transform}
	\end{align}
	Applying \eqref{HS-transform} to \eqref{representation} gives
	\begin{align}
	 P_n^\g(E)=\frac{\int_{\R^m}\int_{E\cap\R^n_{\geq0}} e^{-\sum_{j=1}^nw_jx_j+\i\sum_{l=1}^m\eta_l\lb A_{l\bullet}x- b_l\rb}dxe^{-\sum_{l=1}^m\frac{1}{4\g}\eta_l^2}d\eta}{\int_{\R^m}\int_{\R^n_{\geq0}} e^{-\sum_{j=1}^nw_jx_j+\i\sum_{l=1}^m\eta_l\lb A_{l\bullet}x- b_l\rb}dxe^{-\sum_{l=1}^m\frac{1}{4\g}\eta_l^2}d\eta}.\label{pre-Bartlett}
	\end{align}
	 Using
	\begin{align}\label{equ:conversion}
	 \i\sum_{l=1}^m\eta_l\lb A_{l\bullet}x- b_l\rb=\i\sum_{j=1}^nx_j\langle\eta,A_{\bullet j}\rangle-\i\langle\eta,b\rangle,
	\end{align}
	and observing that 
	\begin{align}\label{identity_complex_normalizing}
	Z_{n,w,\eta}=Z_{n,w}\E_{P_{n,w}}e^{\i\sum_{j=1}^n\langle\eta,A_{\bullet j}\rangle X_j}=Z_{n,w}\prod_{j=1}^n\frac{w_j}{w_j-\i\langle\eta,A_{\bullet j}\rangle}\not=0,
	\end{align}
	we get
	\begin{align}
	 P_n^\g(E)&=\frac{\int_{\R^m}\int_{E\cap\R^n_{\geq0}} e^{-\sum_{j=1}^n(w_j-\i\langle\eta,A_{\bullet j}\rangle)x_j}dxe^{-\i\langle\eta,b\rangle-\sum_{l=1}^m\frac{1}{4\g}\eta_l^2}d\eta}{\int_{\R^m}\int_{\R^n_{\geq0}} e^{-\sum_{j=1}^n(w_j-\i\langle\eta,A_{\bullet j}\rangle)x_j}dxe^{-\i\langle\eta,b\rangle-\sum_{l=1}^m\frac{1}{4\g}\eta_l^2}d\eta}\label{pre-mixture2}\\
	 &=\frac{\int_{\R^m}P_{n,w,\eta}(E)Z_{n,w,\eta}e^{-\i\langle\eta,b\rangle-\sum_{l=1}^m\frac{1}{4\g}\eta_l^2}d\eta}{\int_{\R^m}Z_{n,w,\eta}e^{-\i\langle\eta,b\rangle-\sum_{l=1}^m\frac{1}{4\g}\eta_l^2}d\eta}\label{pre-mixture3}\\
	 &=\frac{\int_{\R^m}P_{n,w,\eta}(E)\E_{P_{n,w}}e^{\i \sum_{j=1}^n\langle \eta,A_{\bullet j}\rangle X_j-\i\langle\eta,b\rangle-\sum_{l=1}^m\frac{1}{4\g}\eta_l^2}d\eta}{\int_{\R^m}\E_{P_{n,w}}e^{\i \sum_{j=1}^n\langle \eta,A_{\bullet j}\rangle X_j-\i\langle\eta,b\rangle-\sum_{l=1}^m\frac{1}{4\g}\eta_l^2}d\eta},\label{pre-mixture4}
	\end{align}
	where for the last equality we divided both numerator and denominator by $Z_{n,w}$ and used
	\begin{align}
	    \frac{Z_{n,w,\eta}}{Z_{n,w}}=\E_{P_{n,w}}e^{\i \sum_{j=1}^n\langle \eta,A_{\bullet j}\rangle X_j}.
	\end{align}
	Taking the limit $\g\to\infty$ on the l.h.s.~of \eqref{pre-mixture4} yields $P_n(E)$ and thus the l.h.s.~of \eqref{equ:mixture2} by Proposition \ref{prop:convergence}. 	
	
	To address taking the limit $\g\to\infty$ of the r.h.s.~of \eqref{pre-mixture4}, we first note that by our assumption \eqref{integrability-condition}, the limit $\g\to\infty$ of the denominator in \eqref{pre-mixture4} can be taken inside of the $\eta$-integral. We now observe that the denominator in \eqref{pre-mixture4} is just a multiple of the denominator in \eqref{eq_lemma_gamma}, which we have shown in \eqref{end_proof} to converge to $H^{n-m}(K_n)>0$. Thus in particular the limit of the denominator in \eqref{pre-mixture4} is non-zero, justifying the seperate consideration of numerator and denominator. It remains to show that we can interchange the limit $\g\to\infty$ with the $\eta$-integral in the numerator of \eqref{pre-mixture4}. Comparing with the denominator, we see that we need to show boundedness  of  $P_{n,w,\eta}(E)$ in $\eta$ (recall that $P_{n,w,\eta}(E)\in\C$). From \eqref{def:complex_measure} we find 
	\begin{align}
	   P_{n,w,\eta}(E)=\frac{\E_{P_{n,w}}\dopp1_{E}(X)e^{\i\sum_{j=1}^n\langle\eta,A_{\bullet j}\rangle X_j}}{\E_{P_{n,w}}e^{\i\sum_{j=1}^n\langle\eta,A_{\bullet j}\rangle X_j}}.	\end{align}As we saw in \eqref{identity_complex_normalizing}, we have 
	\begin{align}\label{denominator_decay}
\E_{P_{n,w}}e^{\i\sum_{j=1}^n\langle\eta,A_{\bullet j}\rangle X_j}=\prod_{j=1}^n\frac{w_j}{w_j-\i\langle\eta,A_{\bullet j}\rangle}
	\end{align}
	and integrability in $\eta$ of this expression is our assumption \eqref{integrability-condition}.
Let now without loss of generality $E\cap\R^n_{\geq0}=\bigtimes_{j=1}^n (a_j,b_j)$, then we readily compute
	\begin{align}
\E_{P_{n,w}}\dopp1_{E}(X)e^{\i\sum_{j=1}^n\langle\eta,A_{\bullet j}\rangle X_j}=\prod_{j=1}^n\frac{w_j\lb e^{-b_j(w_j-\i\langle\eta,A_{\bullet j}\rangle}-e^{-a_j(w_j-\i\langle\eta,A_{\bullet j}\rangle}\rb}{w_j-\i\langle\eta,A_{\bullet j}\rangle},
	\end{align}
which has the same decay in $\eta$ as \eqref{denominator_decay} and hence $P_{n,w,\eta}(E)$ is bounded in $\eta$.
\end{proof}
The following lemma links the integrability condition \eqref{integrability-condition} to $A$ and $K_n$. In particular it shows that \eqref{integrability-condition} holds in all non-degenerate cases, in the sense that no variable is constant.

\begin{lemma}\label{lemma_integrability}
	The following statements are equivalent:
	\begin{enumerate}
		\item The integrability condition \eqref{integrability-condition} holds.
		\item The rank of $A$ can not be reduced by removing one column.
		\item For any $j=1,\dots,n$, the interval $P_jK_n$ has a non-empty interior, where $P_j:\R^n\to\R, P_j(x):=x_j$, denotes the projection onto the $j$-th coordinate. 
	\end{enumerate}
\end{lemma}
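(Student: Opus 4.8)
The plan is to prove the equivalences by showing (2) $\Leftrightarrow$ (3) via elementary linear algebra, and then (2) $\Leftrightarrow$ (1) by explicitly evaluating the integrand in \eqref{integrability-condition}. By \eqref{denominator_decay} the function whose integrability we must decide is
\begin{align}
\eta\mapsto\prod_{j=1}^n\frac{w_j}{w_j-\i\langle\eta,A_{\bullet j}\rangle},\qquad \lvv\prod_{j=1}^n\frac{w_j}{w_j-\i\langle\eta,A_{\bullet j}\rangle}\rvv=\prod_{j=1}^n\frac{w_j}{\sqrt{w_j^2+\langle\eta,A_{\bullet j}\rangle^2}},
\end{align}
so the question is purely about the decay in $\eta\in\R^m$ of a product of factors of the form $(1+c_j\langle\eta,A_{\bullet j}\rangle^2)^{-1/2}$. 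Since all entries of $A$ are positive (hence no column is zero), each factor decays like $\lv\langle\eta,A_{\bullet j}\rangle\rv^{-1}$ in the direction transverse to the hyperplane $\langle\eta,A_{\bullet j}\rangle=0$; the product is in $L^1(\R^m)$ precisely when these hyperplanes are ``spread out enough'' to force decay in every direction fast enough.

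First I would establish (2) $\Leftrightarrow$ (3). If removing column $j_0$ does not reduce the rank of $A$, then $A_{\bullet j_0}\in\Span\{A_{\bullet j}:j\neq j_0\}$, so the equality constraint $Ax=b$ can be used to express $x_{j_0}$ in terms of the other coordinates up to a point in the kernel; since $K_n$ has non-empty relative interior, one can move along $\ker A$ to vary $x_{j_0}$, giving $P_{j_0}K_n$ non-degenerate. Conversely, if removing column $j_0$ does reduce the rank, then $A_{\bullet j_0}\notin\Span\{A_{\bullet j}:j\neq j_0\}$; picking a row vector $u$ orthogonal to all $A_{\bullet j}$, $j\neq j_0$, but with $\langle u,A_{\bullet j_0}\rangle\neq0$, the constraint $Ax=b$ forces $x_{j_0}=\langle u,b\rangle/\langle u,A_{\bullet j_0}\rangle$ to be constant on $K_n$, so $P_{j_0}K_n$ is a point. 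This is routine and should occupy only a short paragraph.

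The substantive part is (1) $\Leftrightarrow$ (2), which I expect to be the main obstacle. For the direction (2) $\Rightarrow$ (1): assuming the rank cannot be reduced by deleting a single column, I would argue that one can partition (or at least select subsets of) the columns into $m$ groups each of which still spans $\R^m$ — more carefully, use that deleting any one column keeps rank $m$ to extract, by a greedy/matroid argument, $m$ pairwise-disjoint subsets of columns, each a spanning set. Then bound $\lv\prod_{j=1}^n\cdots\rv$ from above by the product over these $m$ spanning subsets of $\prod_{j\in I_l}(w_j^2+\langle\eta,A_{\bullet j}\rangle^2)^{-1/2}$, and on each such subset use that $\sum_{j\in I_l}\langle\eta,A_{\bullet j}\rangle^2\gtrsim\|\eta\|^2$ (since the columns in $I_l$ span) to get a bound like $(1+\|\eta\|^2)^{-1/2}$ per group, hence $(1+\|\eta\|)^{-m}$ overall, which is just barely non-integrable on $\R^m$ — so I would need to squeeze out a bit more, e.g. by noting that leftover columns contribute extra decay, or by using $m+1$ near-spanning subsets, to reach an exponent strictly exceeding $m$. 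For the converse (1) $\Rightarrow$ (2) I would argue by contraposition: if deleting column $j_0$ reduces the rank, then as above all columns lie in a hyperplane $\{\langle u,\cdot\rangle=0\}$ except... no — rather, $\langle\eta,A_{\bullet j}\rangle$ for the relevant columns all vanish on a common line, so along that line in $\eta$-space all but finitely many factors are constant and the product does not decay, destroying integrability. The delicate point throughout is the borderline nature of the decay rate, so the careful bookkeeping of how many spanning subsets can be extracted — essentially the $K=m$ or $K=m+1$ case of Property A — is where the real work lies; I would lean on the positivity of all entries of $A$ (so every column is ``generic'' relative to the coordinate hyperplanes) to make the extraction argument go through cleanly.
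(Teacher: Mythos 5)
Your equivalence (2)$\Leftrightarrow$(3) and your contrapositive argument for (1)$\Rightarrow$(2) are sound and essentially the paper's: if deleting column $j_0$ drops the rank, pick $u\perp A_{\bullet j}$ for all $j\neq j_0$ with $\langle u,A_{\bullet j_0}\rangle\neq0$; along the line $\R u$ only one factor of the product decays, at rate $\lv s\rv^{-1}$, so the integral over a tube around that line diverges. The genuine gap is in the main direction (2)$\Rightarrow$(1). Your plan hinges on extracting $m$ (and then, to beat the borderline exponent, $m+1$) \emph{pairwise disjoint} spanning subsets of columns from hypothesis (2). That extraction is impossible in general: condition (2) only says every $(n-1)$-column submatrix still has rank $m$, which carries no base-packing information. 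Concretely, take $m=2$ and columns $(1,2)^t$, $(2,1)^t$ together with $n-2$ copies of $(1,1)^t$ (all entries positive, so positivity does not rescue the argument): deleting any single column leaves rank $2$, yet every spanning subset must contain one of the two columns not parallel to $(1,1)^t$, so there are at most two disjoint spanning subsets, never $m+1$; for $n=m+1$ with columns in general position there are not even two. Your fallback, ``leftover columns contribute extra decay,'' is exactly where the difficulty sits (the leftover columns in the example give no decay along the line orthogonal to $(1,1)^t$), and the proposal offers no mechanism that handles this in general dimension. In effect you are invoking Property A with $K=m+1$, which in the paper is derived from the much stronger assumption $\|\hat A\|_{\max}=o(1)$, not from the rank condition of this lemma.

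The paper's route avoids disjointness altogether by using the Brascamp--Lieb inequality with Barthe's characterization of positivity of the Brascamp--Lieb constant: condition (2) guarantees, for each $j$, spanning $m$-subsets $I_{j,1}\ni j$ and $I_{j,2}\not\ni j$; averaging the indicator vectors produces exponents $c$ with $0<c_j<1$ lying in the convex hull of indicators of spanning $m$-subsets, so $D_{c,B}>0$, and since $c_j<1$ each $f_j(y)=\lb w_j/\sqrt{w_j^2+y^2}\rb^{1/c_j}$ is in $L^1(\R)$; the inequality \eqref{BL-inequality} then yields \eqref{integrability-condition} directly. This fractional (convex-combination) version of your subset idea is precisely what makes the borderline decay integrable without requiring disjoint bases; to complete your proof you would need either this tool or a substitute argument covering configurations like the one above.
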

\begin{proof}
	We first prove that (1) implies (2) by showing that if the rank of $A$ can be reduced by removing one column, then \eqref{integrability-condition} fails. So assume that (2) does not hold. Then without loss of generality we may assume that $A_{\bullet n}=(0,0,\dots,0,1)^t\in\R^m$ and $A_{mj}=0$ for $j=1,\dots,n-1$, otherwise this situation can be generated by a linear transformation that leaves $K_n$ unchanged, and a relabeling of variables. Now, analogously to \eqref{identity_complex_normalizing} we get
	\begin{align}
	\int_{\R^m}\lvv\E_{P_{n,w}} e^{\i\sum_{l=1}^m\eta_l\lb A_{l\bullet}X- b_l\rb}\rvv d\eta=\int_{\R^m}\prod_{j=1}^n\frac{ w_j}{\sqrt{ \lb w_j\rb^2+\lb\sum_{l=1}^m\eta_l A_{lj}\rb^2}}d\eta.
	\end{align}
	By the shape of $A_{\bullet n}$, the $n$-th factor gives a decay in $\eta_m$ of order $\O(\lv\eta_m\rv^{-1})$ as $\eta_m\to\pm\infty$. Moreover, since $A_{mj}=0$ for $j\not=n$, the other factors do not decay in $\eta_m$. Hence, \eqref{integrability-condition} does not hold.
	
	To prove that (2) implies condition \eqref{integrability-condition}, we use the Brascamp-Lieb inequality.  Let $c\in\R^n_{\geq0}$ with $\frac1m\sum_{j=1}^n c_j=1$ be given, as well as vectors $B_j\in\R^m$, $B_j\not=0$, $j=1,\dots,n$, that span $\R^m$. Then the (univariate) Brascamp-Lieb inequality asserts that for all functions $f_j\in L^1(\R)$, $f_j\geq0$, $j=1,\dots,n$, the inequality
	\begin{align}
	\int_{\R^m}\prod_{j=1}^n f_j^{c_j}(\eta B_j)d\eta\leq \frac1{\sqrt{D_{c,B}}}\prod_{j=1}^n\lb\int_\R f_j(x)dx\rb^{c_j}\label{BL-inequality}
	\end{align}
	holds, where (the vectors $B_j$ being understood as column vectors)
	\begin{align}
	D_{c,B}:=\inf\left\{\frac{\det\lb\sum_{j=1}^n\s_jB_jB_j^t\rb}{\prod_{j=1}^n\s_j^{c_j}},\ \s_j>0,\,j=1,\dots,n\right\}.\label{BL-constant}
	\end{align}
	The constant $D_{c,B}$ is positive or zero. Barthe has in \cite{Barthe}  given the following nice geometric characterization of the cases where $D_{c,B}$ is positive, which remarkably does not depend on $B_j$, $j=1,\dots,n$. To state it, let for $I\subset\{1,\dots,n\}$, $\mb1_I$ denote the vector $(\mb1_I)_j:=1$ if $j\in I$ and 0 else. Now, \cite[Proposition 3]{Barthe} shows that $D_{c,B}>0$ if and only if $c\in\R^n_{\geq0}$ belongs to the convex hull of the vectors $\mb1_I$ of the subsets $I\subset \{1,\dots,n\}$, $\lv I\rv=m$ such that $B_j,j\in I$ span $\R^m$.
	
	In our application of \eqref{BL-inequality}, we take $B_j:=A_{\bullet j}$.	
By assumption, the rank of $A$ is $m$ and thus the columns $A_{\bullet j}$ span $\R^m$.  Moreover, $A_{\bullet j}\not=0$ for all $j$, since otherwise $K_n$ would not be bounded. This together with assertion (2) of the lemma implies that for each $j=1,\dots,n$, there are subsets $I_{j,1},I_{j,2}\subset\{1,\dots,n\}$ with $\lv I_{j,1}\rv=\lv I_{j,2}\rv=m$ such that $j\in I_{j,1},\,j\notin I_{j,2}$, and  $(A_{\bullet j},j\in I_i)$ spans $\R^m$, $i=1,2$. Defining 
\begin{align}
c:=\frac{1}{2n}\sum_{j=1}^n \mb1_{I_{j,1}}+\mb 1_{I_{j,2}}     
\end{align}
we get $0<c_j<1$ for all $j$, hence  $D_{c,B}>0$ in \eqref{BL-inequality}.
Defining the functions $f_j$ by
	\begin{align}
	f_j^{c_j}(y):=\frac{ w_j}{\sqrt{ \lb w_j\rb^2+y^2}},
	\end{align}
	it follows that $f_j\in L^1(\R)$. Now (1) follows from \eqref{BL-inequality}.
	
	To address the equivalence of (3) and (2), we observe that $X_j$ being constant on $K_n$ is equivalent to it being fixed by a constraint $A_{l\bullet}X=b_l$ of the form (possibly after a changing to an equivalent representation $(A,b)$) $A_{l\bullet}=(0,\dots,0,1,0,\dots,0)$ with the $1$ being at position $j$. As above, we may also assume that $A_{mj}=0$ for $m\not=l$, which implies that removing column $j$ reduces the rank of $A$ and hence (2) does not hold. On the other hand, if removing a column, say  $A_{\bullet j}$, reduces the rank of $A$, then the reduced row echelon form of $A$ features a row $A_{l\bullet}=(0,\dots,0,1,0,\dots,0)$ with the $1$ at position $j$, which is as per the above equivalent to $X_j$ being constant.
\end{proof}

\begin{proof}[Proof of Theorem \ref{thrm_mixture}]
The theorem follows directly from combining Proposition \ref{prop_mixture} and Lemma \ref{lemma_integrability}. Note in particular that part 3 of Lemma \ref{lemma_integrability} implies that $E$ is a continuity set for $P_n$ so that the assumption in Proposition \ref{prop_mixture} can be dropped.
\end{proof}

For the proofs of our main results we will apply Theorem \ref{thrm_mixture} to the characteristic function of $S_n$. 
\begin{cor}
\label{cor_Bartlett}
Assume that the rank of $A$ can not be reduced by removing one column.
	Then, for any $t\in\R$,
	\begin{align}
	\E_{P_n}e^{\i t S_n}=\frac{\int_{\R^m}\E_{P_{n,w}} e^{\i t S_n+\i\sum_{l=1}^m\eta_l\lb A_{l\bullet}X- b_l\rb}d\eta}{\int_{\R^m}\E_{P_{n,w}} e^{\i\sum_{l=1}^m\eta_l\lb A_{l\bullet}X- b_l\rb}d\eta},\label{Bartlettintro}
	\end{align}
	where $\E_{P_{n}}$ and $\E_{P_{n,w}}$ denote expectation w.r.t.~$P_n$ and $P_{n,w}$, respectively. 
\end{cor}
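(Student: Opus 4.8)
The plan is to repeat the proof of Proposition~\ref{prop_mixture} almost verbatim, but with the indicator $\dopp1_E$ replaced by the bounded continuous function $x\mapsto e^{\i tS_n}$ (recall $S_n$ is an affine function of $x$, so $x\mapsto e^{\i tS_n(x)}$ is genuinely bounded and continuous on $\R^n$). First I would note that since $P_n^\g\to P_n$ in distribution by Proposition~\ref{prop:convergence}, testing against the bounded continuous function $e^{\i tS_n}$ directly gives $\E_{P_n^\g}e^{\i tS_n}\to\E_{P_n}e^{\i tS_n}$ as $\g\to\infty$; in contrast to Proposition~\ref{prop_mixture} no continuity-set hypothesis is needed here, precisely because $e^{\i tS_n}$ is already continuous. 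Next, applying the Hubbard--Stratonovich identity \eqref{HS-transform} to the Gaussian factors in \eqref{representation}, carrying the extra factor $e^{\i tS_n}$ through exactly as in the steps leading to \eqref{pre-mixture4}, using \eqref{equ:conversion}, interchanging the $x$- and $\eta$-integrals by Fubini (the integrand being absolutely integrable over $\R^n_{\geq0}\times\R^m$), and cancelling a common deterministic prefactor between numerator and denominator, I would obtain
\begin{align}
\E_{P_n^\g}e^{\i tS_n}=\frac{\int_{\R^m}\E_{P_{n,w}} e^{\i tS_n+\i\sum_{l=1}^m\eta_l\lb A_{l\bullet}X- b_l\rb}e^{-\sum_{l=1}^m\frac{1}{4\g}\eta_l^2}d\eta}{\int_{\R^m}\E_{P_{n,w}} e^{\i\sum_{l=1}^m\eta_l\lb A_{l\bullet}X- b_l\rb}e^{-\sum_{l=1}^m\frac{1}{4\g}\eta_l^2}d\eta}.
\end{align}

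It then remains to pass to the limit $\g\to\infty$ under both $\eta$-integrals. For the denominator this is exactly the argument already carried out in Proposition~\ref{prop_mixture}: by \eqref{equ:conversion} and independence of the $X_j$ under $P_{n,w}$, the modulus of the integrand equals $\prod_{j=1}^n w_j/\sqrt{w_j^2+\langle\eta,A_{\bullet j}\rangle^2}$, which lies in $L^1(\R^m,d\eta)$ by Lemma~\ref{lemma_integrability} under the rank hypothesis, while the Gaussian factor is bounded by $1$ and tends to $1$ pointwise; dominated convergence then shows the denominator tends to $\int_{\R^m}\E_{P_{n,w}}e^{\i\sum_l\eta_l(A_{l\bullet}X-b_l)}\,d\eta=Z_{n,\textup{mix}}$, which is finite and nonzero by Theorem~\ref{thrm_mixture}, so the limiting denominator may be divided out. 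For the numerator, independence under $P_{n,w}$ gives
\begin{align}
\lvv\E_{P_{n,w}}e^{\i tS_n+\i\sum_{l=1}^m\eta_l\lb A_{l\bullet}X-b_l\rb}\rvv=\prod_{j=1}^n\frac{w_j}{\sqrt{w_j^2+\lb t\l_j+\langle\eta,A_{\bullet j}\rangle\rb^2}},
\end{align}
and once this is dominated by an $\eta$-integrable function the same dominated convergence argument applies to the numerator, and — the limiting denominator being nonzero — yields \eqref{Bartlettintro}.

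The main point requiring care, and the closest thing to an obstacle, is that last domination, since the integrand just displayed is \emph{not} the one covered by Lemma~\ref{lemma_integrability}: it carries an affine shift $t\l_j$ inside each argument. I would handle this by the elementary observation that for any fixed $s\in\R$ the ratio $v\mapsto(w_j^2+v^2)/(w_j^2+(s+v)^2)$ is continuous on $\R$ and tends to $1$ as $v\to\pm\infty$, hence is bounded by some constant $C_j=C_j(t\l_j,w_j)<\infty$; taking $v=\langle\eta,A_{\bullet j}\rangle$ and $s=t\l_j$ gives
\begin{align}
\prod_{j=1}^n\frac{w_j}{\sqrt{w_j^2+\lb t\l_j+\langle\eta,A_{\bullet j}\rangle\rb^2}}\le\Big(\prod_{j=1}^nC_j\Big)\prod_{j=1}^n\frac{w_j}{\sqrt{w_j^2+\langle\eta,A_{\bullet j}\rangle^2}}\in L^1(\R^m,d\eta).
\end{align}
Everything else is the bookkeeping already present in Proposition~\ref{prop_mixture}, so I expect no further difficulty. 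An alternative would be to approximate $e^{\i tS_n}$ by step functions on product rectangles and invoke Theorem~\ref{thrm_mixture} termwise, but this needs the same uniform $L^1(\R^m,d\eta)$-control and is no shorter.
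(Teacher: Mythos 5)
Your proposal is correct and follows essentially the same route as the paper's proof: apply the Hubbard--Stratonovich/γ-approximation argument of Proposition \ref{prop_mixture} with the extra factor $e^{\i tS_n}$, and justify the $\g\to\infty$ limit under the $\eta$-integrals by dominating the shifted integrand $\prod_j w_j/\sqrt{w_j^2+(t\l_j+\langle\eta,A_{\bullet j}\rangle)^2}$ by a constant (your product of per-coordinate constants $C_j$ plays exactly the role of the paper's single constant $C_n$) times the unshifted integrand, which is in $L^1(\R^m,d\eta)$ by Lemma \ref{lemma_integrability}, and then use that the limiting denominator is nonzero. The only cosmetic difference is that you cite Theorem \ref{thrm_mixture} for the nonvanishing of the limiting denominator where the paper reuses its earlier computation from Proposition \ref{prop_mixture}; this is immaterial.
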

Note that in \eqref{Bartlettintro}, expectations are always taken w.r.t.~independent random variables, thus converting the problem of proving a CLT under geometric dependence to proving a CLT for independent random variables. Formula \eqref{Bartlettintro} may be seen as a generalization of an old formula due to Bartlett (see \cite{Bartlett}) for the characteristic function of a random variable conditioned on the value of another random variable.
 \begin{proof}
	We follow the same strategy as in the proof of Proposition \ref{prop_mixture}. Instead of \eqref{pre-Bartlett} and \eqref{pre-mixture4} we get
	\begin{align}
	\E_{P_n^\g}e^{\i t S_n}=\frac{\int_{\R^m}\E_{P_{n,w}} e^{\i t S_n+\i\sum_{l=1}^m\eta_l\lb A_{l\bullet}X- b_l\rb}e^{-\sum_{l=1}^m\frac{1}{4\g}\eta_l^2}d\eta}{\int_{\R^m}\E_{P_{n,w}} e^{\i\sum_{l=1}^m\eta_l\lb A_{l\bullet}X- b_l\rb}e^{-\sum_{l=1}^m\frac{1}{4\g}\eta_l^2}d\eta}.\label{pre-Bartlett_cf}
	\end{align}
	Taking the limit $\g\to\infty$ on the l.h.s.~of \eqref{pre-Bartlett_cf} yields the l.h.s.~of \eqref{Bartlettintro}. For the convergence of the r.h.s.~of \eqref{pre-Bartlett}, we  show that the limit $\g\to\infty$ can be taken inside of the two integrals in numerator and denominator. To see the integrability of numerator and denominator in \eqref{pre-Bartlett_cf} we observe
	\begin{align}
	&\lv\E_{P_{n,w}} e^{\i t S_n+\i\sum_{l=1}^m\eta_l\lb A_{l\bullet}X- b_l\rb}\rv=\lv\E_{P_{n,w}} e^{\i t \sum_{j=1}^n \l_jX_j+\i\sum_{l=1}^m\eta_l A_{l\bullet}X}\rv\\
	&=\lv\E_{P_{n,w}} e^{\i\sum_{j=1}^nX_j\lb\sum_{l=1}^m\eta_l A_{lj}+t\l_j\rb}\rv\\
	&=\prod_{j=1}^n\frac{ w_j}{\sqrt{ \lb w_j\rb^2+\lb\sum_{l=1}^m\eta_l A_{lj}+t\l_j\rb^2}}\leq C_n \prod_{j=1}^n\frac{ w_j}{\sqrt{ \lb w_j\rb^2+\lb\sum_{l=1}^m\eta_l A_{lj}\rb^2}}\label{equ:prooflemma1}\\
	&=C_n\lv\E_{P_{n,w}} e^{\i\sum_{l=1}^m\eta_lA_{l\bullet}X}\rv,
	\end{align}
	where $C_n>0$ is chosen so large that we have 
	\begin{align}
	\lb w_j\rb^2+\lb\sum_{l=1}^m\eta_l A_{lj}+t\l_j\rb^2\geq \frac1{C_n^2}\lb \lb w_j\rb^2+\lb\sum_{l=1}^m\eta_l A_{lj}\rb^2\rb
	\end{align}
	for any $j$. By Lemma \ref{lemma_integrability} and  \eqref{integrability-condition}, we conclude that both numerator and denominator are integrable, uniformly in $\g$. As we saw in the proof of Proposition \ref{prop_mixture} already, the denominator of \eqref{pre-Bartlett_cf} does not converge to 0 as $\g\to\infty$. This implies that the r.h.s.~of \eqref{pre-Bartlett_cf} converges as $\g\to\infty$ to the r.h.s.~of \eqref{Bartlettintro}.

 \end{proof}

\section{Maximum entropy principle for the probabilistic barycenter}
\label{su_maxent}
In this section we will deal with the choice of the so far not determined weight vector $w$. For the two previous sections we only needed  $w\in\Span\{A_{l\bullet},l=1,\dots,m\}$ along with positivity of each of its elements. For taking the limit $n\to\infty$ it will however turn out to be very convenient to have the statistics $ A_{l\bullet}X-  b_l$, $l=1,\dots,m$ centered under $P_{n,w}$, i.e.~we seek $w$ such that 
\begin{align}
\E_{P_{n,w}} A_{l\bullet}X=  b_l,\quad l=1,\dots,m.\label{centering-condition1}
\end{align}
From the definition of $P_{n,w}$, it is apparent that \eqref{centering-condition1} can be written as
\begin{align}
A_{l\bullet}\frac{1}{w}=  b_l,\quad l=1,\dots,m,\label{centering-condition2}
\end{align}
where $\frac{1}{w}$ stands for the (column) vector $\lb\frac{1}{w}\rb_j:=\frac{1}{w_j}$, $j=1,\dots,n$. Hence, we must have $\frac{1}{w}\in K_n$. The next proposition shows that there is one and only one such $w$. Moreover, it enjoys a nice variational characterization.

\begin{prop}\noindent\label{Proposition:w}
There is a unique $w>0$ with $w\in\Span\{A_{l\bullet},l=1,\dots,m\}$ such that \eqref{centering-condition2} holds. Moreover, $P_{n,w}$ (with this special $w$) has maximum entropy among the distributions  on $\R^n_{\geq0}$ with expectation vector lying in $K_n$. Furthermore, $w^t=A^ t\Lambda_0$, where $\Lambda_0$
is the only minimizer of the convex function
\begin{align}\label{Edual}
H_n(\Lambda)&:=\log\int_{\R^n_{\geq0}}\exp(-\Lambda^tAx)dx
+\langle\Lambda,b\rangle,\\
& =  -\sum_{j=1}^n\log[(A^t\Lambda)_j]+\langle\Lambda,b\rangle.
\end{align}
Here, $H_n$ may be infinite and we set $\log\tau=-\infty$ whenever 
$\tau\leq 0$.
\end{prop}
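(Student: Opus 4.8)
The plan is to identify $P_{n,w}$ as the solution of an entropy-maximization problem and to obtain $w$ via convex duality, which will simultaneously give existence, uniqueness, the form $w=A^t\Lambda_0$, and the variational characterization. First I would set up the primal problem: maximize the differential entropy $-\int_{\R^n_{\geq0}} f\log f\,dx$ over probability densities $f$ on $\R^n_{\geq0}$ whose mean vector $\mu(f):=\int x f(x)\,dx$ satisfies $A\mu(f)=b$, i.e.~$\mu(f)\in K_n$ (with the constraints $\mu(f)\in\R^n_{\geq0}$ being automatic). By the classical Gibbs/Jaynes argument (Lagrange multipliers for the affine constraints $\int x_j f\,dx$, plus normalization), any maximizer must be of exponential type $f(x)\propto e^{-\sum_j w_j x_j}\dopp1_{\R^n_{\geq0}}(x)$; for this to be integrable we need $w_j>0$ for all $j$, and matching the mean gives exactly \eqref{centering-condition2}, i.e.~$A\tfrac1w=b$. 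So the content to be proven is that such a $w$ exists, is unique, lies in $\Span\{A_{l\bullet}\}$, and that $P_{n,w}$ genuinely attains the maximum.

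The cleanest route is through the dual function $H_n$. I would first note that for $\Lambda\in\R^m$ with $(A^t\Lambda)_j>0$ for all $j$, the integral $\int_{\R^n_{\geq0}}e^{-\Lambda^tAx}\,dx=\prod_{j=1}^n (A^t\Lambda)_j^{-1}$ converges, giving the second displayed formula $H_n(\Lambda)=-\sum_{j=1}^n\log\big[(A^t\Lambda)_j\big]+\langle\Lambda,b\rangle$; elsewhere $H_n=+\infty$. Since $t\mapsto-\log t$ is strictly convex on $(0,\infty)$ and $\Lambda\mapsto(A^t\Lambda)_j$ is linear (and, because $A$ has rank $m$, the map $\Lambda\mapsto A^t\Lambda$ is injective), $H_n$ is strictly convex on its effective domain $D:=\{\Lambda:\ A^t\Lambda>0\ \text{componentwise}\}$. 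Next I would show $D\neq\emptyset$ and that $H_n$ attains its infimum at an interior point: $D$ is non-empty because $K_n$ having non-empty relative interior forces (via the positivity lemma, Lemma \ref{lemma-positivity}, and strong LP duality) the existence of $\Lambda$ with $A^t\Lambda>0$; and coercivity of $H_n$ on $D$ follows from the facts that $\langle\Lambda,b\rangle\to+\infty$ along directions where $A^t\Lambda$ stays bounded (using $b>0$ and compactness of $K_n$, equivalently that $\{A^t\Lambda\ge 0\}$ is a pointed cone), while $-\sum_j\log(A^t\Lambda)_j\to+\infty$ as $\Lambda$ approaches $\partial D$ (some coordinate $(A^t\Lambda)_j\to0^+$) or as $\|\Lambda\|\to\infty$ with all coordinates bounded below — here the non-degeneracy, i.e.~that no single $(A^t\Lambda)_j$ can blow up without $\langle\Lambda,b\rangle$ also blowing up, is what must be checked carefully. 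Strict convexity plus coercivity plus openness of $D$ then yields a unique minimizer $\Lambda_0\in D$ characterized by $\nabla H_n(\Lambda_0)=0$. Computing $\nabla H_n(\Lambda)=-\sum_{j=1}^n \frac{A_{\bullet j}}{(A^t\Lambda)_j}+b = -A\big(\tfrac1{A^t\Lambda}\big)+b$, the stationarity equation $\nabla H_n(\Lambda_0)=0$ reads $A\big(\tfrac{1}{A^t\Lambda_0}\big)=b$. Setting $w:=A^t\Lambda_0$, we have $w>0$, $w\in\Span\{A_{l\bullet}\}$, and $A\tfrac1w=b$, which is \eqref{centering-condition2}; uniqueness of such $w$ follows from uniqueness of $\Lambda_0$ together with injectivity of $\Lambda\mapsto A^t\Lambda$ (if $w'$ is another solution with $w'=A^t\Lambda'$, then $\Lambda'\in D$ and $\nabla H_n(\Lambda')=0$, forcing $\Lambda'=\Lambda_0$).

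Finally I would close the loop on the maximum-entropy claim via the standard Gibbs inequality: for any density $f$ on $\R^n_{\geq0}$ with $\mu(f)\in K_n$, writing $g$ for the density of $P_{n,w}$, one has $0\le \int f\log(f/g)\,dx = -h(f) - \int f\log g\,dx$, and since $\log g(x)=\sum_j\log w_j - \sum_j w_j x_j$ depends on $x$ only affinely, $\int f\log g\,dx = \sum_j\log w_j - \langle w,\mu(f)\rangle = \sum_j\log w_j - \langle\Lambda_0, A\mu(f)\rangle = \sum_j\log w_j-\langle\Lambda_0,b\rangle$, which is the same for $f$ and for $g$ itself; hence $h(f)\le h(g)=h(P_{n,w})$, with equality iff $f=g$ a.e. This uses crucially that $w\in\Span\{A_{l\bullet}\}$ so that $\langle w,\mu(f)\rangle$ only sees $A\mu(f)=b$ — which is exactly why the entropy maximizer has constant density on $K_n$. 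The main obstacle I anticipate is the coercivity/existence-of-an-interior-minimizer step: one must rule out the minimizing sequence escaping to the boundary of $D$ or to infinity, and this is precisely where compactness of $K_n$ (equivalently, strict positivity of an admissible $(A,b)$, Lemma \ref{lemma-positivity}) and the rank-$m$ assumption enter; the rest is routine convex analysis and the Gibbs inequality.
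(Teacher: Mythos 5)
Your proposal is correct, but it takes a genuinely different route from the paper. The paper does not carry out the finite-dimensional duality itself: after reducing to positive entries of $(A,b)$ via Lemma \ref{lemma-positivity}, it observes the identity $K(Q,P_{n,A^t\mathbf 1})=H_n(\mathbf 1)-S(Q)$ for every feasible $Q$, so that maximizing entropy over laws with mean in $K_n$ is the same as computing the generalized $I$-projection of the reference measure $P_{n,A^t\mathbf 1}$ onto that convex set of laws; it then exhibits one feasible $Q$ (a product of exponentials with means $\bar x\in\relint K_n$) mutually absolutely continuous with the reference measure, and invokes Csisz\'ar's theorems to obtain existence, uniqueness, the exponential form $w=A^t\Lambda_0$ and the dual characterization through $H_n$ in one stroke; this framework also feeds directly into Corollary \ref{Cor_Entrop} via the Pythagorean inequality. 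You instead prove everything by hand: strict convexity of $H_n$ on $D=\{\Lambda: A^t\Lambda>0\}$ (using rank $m$), coercivity and an interior minimizer, the stationarity equation $A(1/(A^t\Lambda_0))=b$, and then the Gibbs inequality, noting that the log-density of $P_{n,w}$ is affine with linear part in the row span of $A$, so that $\int f\log g\,dx$ is the same for every feasible $f$ --- which is exactly the mechanism behind the paper's identity \eqref{Eentropic}. Your route is self-contained and elementary; the paper's is shorter and places the result inside Csisz\'ar's general $I$-projection machinery, which it reuses afterwards.

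One spot in your sketch should be tightened: the coercivity discussion (linear term blowing up along directions where $A^t\Lambda$ stays bounded; $-\sum_j\log(A^t\Lambda)_j\to+\infty$ as $\|\Lambda\|\to\infty$ with coordinates bounded below) is not quite the right statement, since the log-sum tends to $-\infty$ when coordinates blow up. The clean argument: pick $\bar x>0$ with $A\bar x=b$ (non-empty relative interior); on $D$ one has $\langle\Lambda,b\rangle=\langle A^t\Lambda,\bar x\rangle\ge(\min_j\bar x_j)\max_j(A^t\Lambda)_j$ while $-\sum_j\log\lb(A^t\Lambda)_j\rb\ge -n\log\max_j(A^t\Lambda)_j$, so $H_n(\Lambda)\to+\infty$ whenever $\max_j(A^t\Lambda)_j\to\infty$, which happens whenever $\|\Lambda\|\to\infty$ in $D$ by injectivity of $A^t$; and at finite boundary points some $(A^t\Lambda)_j\to0^+$ while all other terms stay bounded, so again $H_n\to+\infty$. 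With this, your existence and uniqueness of $\Lambda_0$, hence of $w$, and the maximum-entropy claim are complete.
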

Before giving the proof of the previous proposition, let us recall that
the entropy $S(P)$ of an absolutely continuous probability measure $P$ on $\R^n_{\geq0}$ with density $f$ is 
defined by 
$$ S(P):= \begin{cases}
-\displaystyle\int_{\R^n_{\geq0}}f(x)\log f(x) dx,\;\; \mbox{ if $f\log f$ is integrable,}\\
-\infty,\;\; \mbox{ otherwise.}
\end{cases}$$
Furthermore, if $P$ and $Q$ are probability measures on $\R^n_{\geq0}$, the Kullback-Leibler divergence or cross entropy of $P$ with respect to $Q$ is given by  
$$ K(P,Q)= \begin{cases}
\displaystyle\int_{\R^n_{\geq0}}\log\lb\frac{dP}{dQ}\rb P(dx),\;\; \mbox{ if $P\ll Q$ and $\frac{dP}{dQ}\in L^1(P)$},\\
+\infty,\;\; \mbox{ otherwise.}
\end{cases}$$

\begin{proof}
From Lemma \ref{lemma-positivity}, we may assume that the entries of $A$ are positive. Hence, denoting as before by $\mathbf{1}$ the vector whose all entries equal $1$, we may assert that $P_{n,A^t\mathbf{1}}$ is well-defined.  Let $Q$ be an absolutely continuous probability measure on $\R^n_{\geq 0}$ whose density has finite entropy and whose expectation vector lies in $K_n$. Then,
\begin{equation}
K(Q,P_{n,A^t\mathbf{1}})=H_n(\mathbf{1})-S\left(Q\right).
\label{Eentropic}
\end{equation}
Hence, maximizing $S$ over such $Q$ means minimizing 
$K(\cdot,P_{n,A^t\mathbf{1}})$.
Now, as the relative interior of $K_n$ is non-empty by assumption, there exists a vector $\bar x$ with $\bar x_j>0$ for each $j$
satisfying $A\bar x= b$. Let  $\mathcal{E}(\tau)$ denote the exponential distribution with mean $\tau>0$. Setting $Q:=\otimes_{j=1}^{n}\mathcal{E}(\bar x_{j})$ yields a probability measure on $\R^n_{\geq0}$ with expectation vector lying in $K_n$. Furthermore, 
 $Q$ and $P_{n,A^t\mathbf1}$  are mutually absolutely continuous. Hence,  as the set where 
$H_n$ is finite is  non empty and  open, we may conclude applying  directly
\cite[Theorem 2 and corollary]{csiszar1984sanov}.
\end{proof}
The following corollary follows directly from equation \eqref{Eentropic} and the Pythagorean inequality for the Kullback-Leibler information (see 
\cite[Theorem 2.2]{csiszar1975divergence}).
\begin{cor}
\label{Cor_Entrop}
With the notations of the previous proposition, the vector $(1/w_j)_{j=1,\dots,n}$ is the only minimizer over $K_n$ of the function 
$-\sum_{j=1}^n\log(x_j)$.
\end{cor}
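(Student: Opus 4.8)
The plan is to deduce Corollary \ref{Cor_Entrop} from the variational characterization already established in Proposition \ref{Proposition:w}. The key observation is that equation \eqref{Eentropic}, namely $K(Q,P_{n,A^t\mathbf 1})=H_n(\mathbf 1)-S(Q)$, together with the Pythagorean identity for relative entropy, pins down not just the maximizer of the entropy but also makes the relevant functional transparent. Concretely, if $Q^\ast:=P_{n,w}$ is the maximum entropy measure identified in Proposition \ref{Proposition:w}, then for any competitor $Q$ with expectation vector in $K_n$ the Pythagorean inequality $K(Q,P_{n,A^t\mathbf 1})\geq K(Q,Q^\ast)+K(Q^\ast,P_{n,A^t\mathbf 1})$ holds (this is exactly \cite[Theorem 2.2]{csiszar1975divergence}, applicable since $K_n$ is a linear family of constraints on the mean). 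The first step is therefore to rewrite this inequality in terms of entropies using \eqref{Eentropic}.

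Next I would specialize the competitor. Given any $x\in\relint K_n$, i.e.~$x\in K_n$ with $x_j>0$ for all $j$, let $Q:=\otimes_{j=1}^n\mathcal E(x_j)$, the product of exponential distributions with means $x_j$; its expectation vector is $x$ and hence lies in $K_n$. A direct computation gives $S(Q)=\sum_{j=1}^n(1+\log x_j)=n+\sum_{j=1}^n\log x_j$, while $S(Q^\ast)=S(P_{n,w})=n-\sum_{j=1}^n\log w_j=n+\sum_{j=1}^n\log(1/w_j)$. Feeding these into the entropy form of the Pythagorean inequality, and using that $K(Q^\ast,P_{n,A^t\mathbf 1})$ is a constant not depending on $x$, yields
\begin{align}
-\sum_{j=1}^n\log x_j \;\geq\; -\sum_{j=1}^n\log\frac1{w_j} \;+\; K(Q,Q^\ast),
\end{align}
so that $-\sum_{j=1}^n\log x_j\geq-\sum_{j=1}^n\log(1/w_j)$ with equality forcing $K(Q,Q^\ast)=0$, i.e.~$Q=Q^\ast$, i.e.~$x_j=1/w_j$ for all $j$. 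Since $1/w=(1/w_j)_j$ itself lies in $\relint K_n$ (indeed $w>0$ by Proposition \ref{Proposition:w} and $A\frac1w=b$ by \eqref{centering-condition2}), this shows $1/w$ is the unique minimizer of $x\mapsto-\sum_{j=1}^n\log x_j$ over $\relint K_n$, and by continuity (the function tends to $+\infty$ as any $x_j\to0$) over all of $K_n$.

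The one point that needs a little care — and which I expect to be the main obstacle, though a minor one — is the verification that the Pythagorean equality case genuinely gives $Q=Q^\ast$: one must check that $K(Q,Q^\ast)=0$ implies $Q=Q^\ast$ as measures (standard, since relative entropy vanishes iff the measures coincide), and also that the competitor family of product exponentials is rich enough, which it is because every interior point of $K_n$ is realized as such a mean vector. Alternatively, and perhaps more cleanly, one can bypass the Pythagorean theorem entirely: since $1/w\in\relint K_n$ and the function $f(x):=-\sum_j\log x_j$ is strictly convex on $\R^n_{>0}$, it suffices to verify the first-order optimality condition, namely that $\nabla f(1/w)=(-w_j)_j$ is orthogonal to $\ker A$ (the tangent space of $K_n$); this is immediate from $w=A^t\Lambda_0\in\operatorname{row span}(A)=(\ker A)^\perp$, established in Proposition \ref{Proposition:w}. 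Strict convexity then upgrades the stationary point to the unique global minimizer over the convex set $K_n$. I would present the entropy argument as the main line, since the corollary is advertised as following "directly from equation \eqref{Eentropic} and the Pythagorean inequality," and mention the convexity argument as a remark.
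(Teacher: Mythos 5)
Your main argument is exactly the route the paper takes: it deduces the corollary from the entropy identity \eqref{Eentropic} together with Csisz\'ar's Pythagorean inequality, using product exponential competitors $\otimes_j\mathcal E(x_j)$ whose entropy is $n+\sum_j\log x_j$, so your write-up is a correct fleshing-out of the paper's one-line proof (the alternative first-order/strict-convexity argument you mention is a valid bonus, not needed).
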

\section{Proof of Proposition \ref{prop:examplebis}}\label{sec:proofexample}
As we will need Property A, see \eqref{det-condition}, for the proof of Theorem \ref{thrm1}, we will provide a proof of Proposition \ref{prop:examplebis} in this section, which covers Property A and also shows the crucial genericity of our assumption. 

The proof of part (2) of the proposition is obvious by symmetry arguments.
The proof of part (1) of Proposition \ref{prop:examplebis}  relies on the following result, which is a key ingredient of a proof of the L\'evy-Steinitz Theorem (a multivariate generalisation of Riemann's Rearrangement Theorem).

\begin{lemma}[{\cite[Lemma 2]{Rosenthal87}}]\label{lemma_Levy-Steinitz}
	If $\{v_j\}_{j=1}^n\subset\R^d$, $w=\sum_{j=1}^n v_j$, $0<t<1$, and $\|v_j\|\leq \e$ for all $j$, then either $\|v_1-tw\|\leq C_d\e$ for some constant $C_d>0$ only depending on $d$, or there is a permutation $\pi$ of $(2,\dots,n)$ and an $r$ between 2 and $n$ such that $\|v_1+\sum_{j=2}^rv_{\pi(j)}-tw\|\leq C_d\e$ for the same $C_d$.
\end{lemma}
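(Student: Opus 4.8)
The plan is to recast the statement as a rounding problem for the fibre of a box polytope, which is the standard mechanism behind the Steinitz rearrangement lemma. First I would introduce the linear map $f(\theta):=\sum_{j=1}^n\theta_jv_j$ on $\R^n$ and observe that the point
\begin{align}
p:=v_1+t(w-v_1)=tw+(1-t)v_1
\end{align}
lies within $\e$ of the target, $\|p-tw\|=(1-t)\|v_1\|\leq\e$, and is hit exactly by the explicit point $\theta^0:=(1,t,t,\dots,t)$, i.e.\ $f(\theta^0)=p$ and $\theta^0_1=1$. Hence it suffices to produce a vector $\tilde v\in\{0,1\}^n$ with $\tilde v_1=1$ and $\|f(\tilde v)-p\|\leq c_d\e$ for a dimensional constant $c_d$: then $S:=\{j:\tilde v_j=1\}$ contains $1$ and $\|\sum_{j\in S}v_j-tw\|\leq(c_d+1)\e$, so either $S=\{1\}$, giving the first alternative, or $|S|=:r\geq2$ and, writing $S=\{1,\pi(2),\dots,\pi(r)\}$ and completing $\pi$ to an arbitrary permutation of $(2,\dots,n)$, the second alternative holds with $C_d:=c_d+1$.

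To build $\tilde v$, consider the compact polytope $\mathcal{P}:=\{\theta\in\R^n:\theta_1=1,\ 0\leq\theta_j\leq1\text{ for }j=2,\dots,n\}$ and its face $F:=\{\theta\in\mathcal{P}:f(\theta)=p\}$, which is nonempty since $\theta^0\in F$, and compact. Pick a vertex $v^*$ of $F$. The constraints cutting out $F$ are the single equation $\theta_1=1$, the $d$ coordinate equations $f(\theta)=p$, and the box inequalities $0\leq\theta_j\leq1$; the equalities contribute rank at most $d+1$, so at a vertex at least $n-1-d$ of the box inequalities must be active. Consequently the set $T$ of coordinates $j$ with $v^*_j\in(0,1)$ satisfies $|T|\leq d$, and $1\notin T$ because $v^*_1=1$. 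Now round $v^*$ to $\tilde v$ by setting $\tilde v_j:=v^*_j$ for $j\notin T$ and $\tilde v_j:=1$ for $j\in T$: this produces a $\{0,1\}$-vector with $\tilde v_1=1$, and by linearity of $f$ and $\|v_j\|\leq\e$,
\begin{align}
\|f(\tilde v)-p\|=\Bigl\|\sum_{j\in T}(\tilde v_j-v^*_j)v_j\Bigr\|\leq|T|\e\leq d\e,
\end{align}
so one may take $c_d=d$ and $C_d=d+1$.

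The only non-routine ingredient is the vertex count, namely the claim that a vertex of the fibre polytope $F$ has at most $d$ fractional coordinates; this is the familiar basic-feasible-solution rank argument, and the one point to be careful about is that the $d$ equations $f(\theta)=p$ may be linearly dependent and hence must be grouped together with $\theta_1=1$ as contributing rank at most $d+1$ (which is exactly what yields $|T|\le d$). Everything else — the choice of the auxiliary point $p$, the rounding estimate, and the passage from the subset $S$ back to the ``permutation and $r$'' formulation of the statement — is routine bookkeeping.
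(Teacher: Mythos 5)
Your proof is correct. The paper does not prove this lemma itself (it cites Rosenthal), and your argument --- moving to the auxiliary point $p=tw+(1-t)v_1$, taking an extreme point of the fibre polytope $\{\theta:\theta_1=1,\ \theta_j\in[0,1],\ f(\theta)=p\}$, counting at most $d$ fractional coordinates via the active-constraint rank, and rounding them --- is essentially the standard basic-feasible-solution proof of this rounding lemma used in Rosenthal's paper, with the legitimate constant $C_d=d+1$.
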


\begin{proof}[Proof of Proposition  \ref{prop:examplebis} (1)]
	We will first show that we can choose for any $\e>0$ small enough $I_{1},\dots,I_{K}$ pairwise disjoint such that
	\begin{align}
		\lb\hat A_{I_l}\hat A_{I_l}^t\rb_{ii}\geq\frac1{2K}\quad \text{ and } \quad \lvv\lb\hat A_{I_l}\hat A_{I_l}^t\rb_{ij}\rvv\leq\e,\quad i\not=j,\label{near-diagonal}
	\end{align}
	in other words such that $\hat A_{I_l}\hat A_{I_l}^t$ is close to a diagonal matrix with large enough diagonal terms. 
	
	Let $I\subset\{1,\dots,n\}$ and $a_{ij}:=a_{ij}(I):=\lb\hat A_{I}\hat A_{I}^t\rb_{ij}$. To explain the idea of the proof, note that $a_{ii}\in[0,1]$, and $a_{ii}$ is non-decreasing when adding more columns to $I$.
	Because of $\|\hat A\|_{\max}=o(1)$, adding a column to $I$ results in marginal changes to  $\hat A_{I}\hat A_{I}^t$. Since $\langle\hat A_{l'\bullet},\hat A_{l''\bullet}\rangle=0$ and again $\|\hat A\|_{\max}=o(1)$, $a_{ij}$ should for $i\not=j$ be seen as an alternating series as $n\to\infty$. It is clear that it is possible to increase $I$ and thereby $a_{ii}$ for all $i$ while keeping $\lv a_{ij}\rv$ small \emph{for a fixed pair} $i\not=j$, by alternating between selecting columns corresponding to non-negative and negative contributions to $a_{ij}$, respectively. This is in the spirit of Riemann's theorem on rearrangements of conditionally convergent real series. We however need to control the size of all entries $a_{ij}$ \emph{simultaneously}. Here, we use the lesser-known multivariate extension of Riemann's theorem named after L\'evy and Steinitz in the form of Lemma \ref{lemma_Levy-Steinitz}. We use 
	\begin{align}
		v_j:=(\hat A_{ij}\hat A_{kj})_{1\leq i,k\leq m},\quad 1\leq j\leq n,
	\end{align}
	 which we consider as vectors in $\R^{m^2}$. We have 
	 \begin{align}
	 	\sum_{j=1}^n v_j=\hat A\hat A^t=I.
	 \end{align}
	As $\|\hat A\|_{\max}=o(1)$ we get $\|v_j\|=o(1)$ uniformly in $j$ and for $t:=\frac1{K+1}$ and $w:=w_1:=I$ we can, by Lemma \ref{lemma_Levy-Steinitz}, find for any $\e>0$ a permutation $\pi$ and $r$ such that $\|v_1+\sum_{j=2}^rv_{\pi(j)}-tI\|\leq \e$. This gives us a set of columns $I_1:=\{1,\pi(2),\dots,\pi(r)\}$ such that \eqref{near-diagonal} holds for $l=1$, for $\e>0$ small enough. We can now iterate this procedure, where in the next iteration $w_2:=I-t\hat A_{I_1}\hat A_{I_1}^t$ is nearly diagonal.
	
	After $K$ iterations we arrive at pairwise disjoint sets $I_1,\dots,I_{K}$ such that \eqref{near-diagonal} holds. Now, by Gershgorin's circle theorem, the eigenvalues of  $\hat A_{I_l}\hat A_{I_l}^t$ are bounded below by 
	\begin{align}
		\frac1{2K}-(K-1)\e
	\end{align}
	and choosing $\e$ small enough shows that they are bounded away from 0 uniformly in $n$. This implies part 1 of the proposition.
\end{proof}

\begin{proof}[Proof of Proposition  \ref{prop:examplebis} (3)]
This proof will use classical results on $Z$-estimation (see for example Chapter 5 in \cite{vandervaart}). 
First recall that
$\Lambda_0$ from Proposition \ref{Proposition:w} is defined as the only minimizer of the function  
$$ H_n(\Lambda)=-\sum_{j=1}^n\log[(A^{t}\Lambda)_j]+\langle\Lambda,b\rangle,$$
provided existence.
Here, we set $\log \theta:=-\infty$ whenever $\theta\leq 0$.
Hence $\Lambda_0$ is the only vector satisfying the equation
$$-\sum_{j=1}^n \frac{A_{\bullet j}}{\langle\Lambda,A_{\bullet j}\rangle}+b=0.$$
Setting $v:=\frac{\Lambda}{n}$, 
$v_0:=\frac{\Lambda_0}{n}$ is the unique zero of the function
$$-\frac{1}{n}
\sum_{j=1}^{n}
\frac{A_{\bullet j}}{\langle v,A_{\bullet j}\rangle}+b .$$
But, for $v\in\inte(\dom G)$ the last function converges  in probability to
$\nabla G(v)+b$ as $n\to\infty$, whereas for $v\not\in\dom G$ it diverges in probability to $+\infty$. Now, \cite[Theorems 5.41 and 5.42, p. 68]{vandervaart} allow to conclude. Indeed, here for $v\in\inte(\dom G)$,
$$\nabla^2G(v)=-\nabla\int_U\frac{u P(du)}{\langle v, u\rangle}= \int_U\frac{u u^t P(du)}{\langle v, u\rangle}=:\Sigma(b).$$
This last matrix is invertible as $\mathcal{U}$ (the closed convex hull of $U$),  is not a subset of any hyperplane. Hence, we may conclude that
${\displaystyle\sqrt{n}\left(\frac{\Lambda_0}{n}-v(b)\right)}$ converges in distribution towards a centered Gaussian vector with covariance matrix $\Sigma^{-1}(b)$. 
So, using the definition of $\tilde A$ in \eqref{def:Atilde} and $w=A^t\Lambda_0$ from Proposition \ref{Proposition:w}, we may write, for $i=1,\dots, m,j=1,\dots ,n$,
$$n\tilde{A}_{ij}=\frac{A_{ij}}{\langle v(b),A_{\bullet j}\rangle}+o_p(1).$$
In particular, $\tilde{A}_{ij}=\O_p(n^{-1})$ thanks to $A_{\bullet j}$ being bounded and bounded away from $0$. Moreover, for $i,j=1,\dots, m$,
$$n(\tilde{A}\tilde{A}^t)_{ij}=\frac{1}{n}\sum_{l=1}^n\frac{A_{il}A_{jl}}{\langle v(b),A_{\bullet l}\rangle^2}+o_p(1). $$
Here, the $o_p$ are uniform both in the  last two equations. Thus the last equality implies  
$$\lim_{n\rightarrow\infty}\sqrt{\frac{1}{n}}(\tilde{A}\tilde{A}^t)^{-\frac{1}{2}}=\Sigma(b)^{-\frac{1}{2}}.$$
Hence, $(\tilde{A}\tilde{A}^t)^{-\frac{1}{2}}=\O_p(\sqrt n)$ and with $\tilde{A}_{ij}=\O_p(n^{-1})$ as seen above we conclude that $\|\hat{A}\|_{\max}=o_p(1)$. This completes the proof.
\end{proof}

\section{Asymptotic analysis}
\label{su_AsA}
In this section we will prove Theorems \ref{thrm1} and \ref{thrm_marginal}. From now on let $w$ be the unique entropy-maximizing $w$ found in Proposition \ref{Proposition:w}. The starting point is Corollary \ref{cor_Bartlett}. In \eqref{Bartlettintro}, $S_n$ is centered w.r.t.~$P_n$. It is however more convenient to first prove the CLT in Theorem \ref{thrm1} under $S_n$ being centered w.r.t.~$P_{n,w}$ as all computations can be carried out using $P_{n,w}$. To this end, define
\begin{align}
	S_n^*:=\sum_{j=1}^n \l_j(X_j-\E_{P_{n,w}} X_j)\label{def:S_n*}
\end{align}
and note that \eqref{Bartlettintro} also holds with $S_n^*$ replacing $S_n$. We further observe that we have 
\begin{align}
	&\E_{P_n}e^{\i t S_n^*}
	=\frac{\int_{\R^m}\E_{P_{n,\mb 1}} \exp\lb{\i t \hat S_n+\i\sum_{l=1}^m\eta_l\lb \hat A_{l\bullet}X-   \hat b_l\rb}\rb d\eta}{\int_{\R^m}\E_{P_{n,\mb 1}}\exp\lb{\i\sum_{l=1}^m\eta_l\lb \hat A_{l\bullet}X-   \hat b_l\rb}\rb d\eta},\label{Bartlett2}
\end{align}
which is now a representation in terms of i.i.d.~exponential random variables with parameter 1, i.e.~in terms of $P_{n,\mb 1}$ with $\mb 1=(1,\dots,1)$, and 
\begin{align}
	\hat S_n:=\sum_{j=1}^n\hat\l_j(X_j-1).
\end{align}

\begin{proof}[Proof of Theorem \ref{thrm1}]
Changing variables from $\eta$ to $\s^{-1}\eta$ in numerator and denominator in \eqref{Bartlett2}, we get
\begin{align}
	&\E_{P_n}e^{\i\s^{-1} t S_n^*}
	=\frac{\int_{\R^m}\E_{P_{n,\mb 1}} \exp\lb{\i\s^{-1} t \hat S_n+\i\s^{-1}\sum_{l=1}^m\eta_l\lb \hat A_{l\bullet}X-   \hat b_l\rb}\rb d\eta}{\int_{\R^m}\E_{P_{n,\mb 1}}\exp\lb{\i\s^{-1}\sum_{l=1}^m\eta_l\lb \hat A_{l\bullet}X-   \hat b_l\rb}\rb d\eta}.\label{Bartlett2'}
\end{align}
We will first analyze the asymptotics of
 \begin{align}
		&\E_{P_{n,\mb 1}} \exp\lb{\i \s^{-1}t \hat S_n+\i\s^{-1}\sum_{l=1}^m\eta_l\lb \hat A_{l\bullet}X-  \hat b_l\rb}\rb\\
		&=\E_{P_{n,\mb1}}\exp\lb\i\s^{-1}\lb\sum_{j=1}^n X_j\lb t\hat\l_j+\langle\eta,\hat A_{\bullet j}\rangle\rb-\sum_{j=1}^n \lb t\hat\l_j+\langle\eta,\hat A_{\bullet j}\rangle\rb\rb\rb\label{equ:startproof}
	\end{align}
	for fixed $\eta$ using cumulants, where we used for \eqref{equ:startproof} the identity
	\begin{align}\label{identity_b}
	\sum_{l=1}^m\eta_l\hat b_l=\langle \eta,\hat b\rangle=\langle \eta,(\tilde A \tilde A^t)^{-\frac12}b\rangle=\langle \eta,(\tilde A \tilde A^t)^{-\frac12}A\frac1w\rangle=\sum_{l=1}^m\sum_{j=1}^n\eta_l\hat A_{lj}=\sum_{j=1}^n\langle\eta,\hat A_{\bullet j}\rangle,
	\end{align}
	and $\tilde A,\hat A,\hat b$ have been defined in \eqref{def:Atilde} and \eqref{def:Ahat}.
	Recall that for a random variable $X$ with characteristic function analytic in a neighborhood of 0, we have for $s$ in that neighborhood
	\begin{align}
	    \E \exp({\i sX})=\exp\lb\sum_{k=1}^\infty\frac{\i^ks^k\k_k}{k!}\rb,
	\end{align}
 where $\k_k=\k_k^\n,\,k=1,\dots$ are the cumulants of the random variable $X$. We will apply this  to
 \begin{align}
     X:=\sum_{j=1}^n X_j\lb t\hat\l_j+\langle\eta,\hat A_{\bullet j}\rangle\rb-\sum_{j=1}^n \lb t\hat\l_j+\langle\eta,\hat A_{\bullet j}\rangle\rb.
 \end{align}
 The $X_j$ are i.i.d.~exponentially distributed with parameter 1. By our assumptions, $t\hat\l_j+\langle\eta,\hat A_{\bullet j}\rangle\to0$ uniformly in $j$ as $n\to\infty$ and $\s$ is bounded away from 0, hence $s=\s^{-1}$ is in the region of analyticity for $n$ large enough. Let us assume for later use that $\lvv t\hat\l_j+\langle\eta,\hat A_{\bullet j}\rangle\rvv\leq1/2$ for $j=1,\dots,n$. 
We then get for any fixed $\eta$ and $n$ large enough
	\begin{align}
		&\E_{P_{n,\mb 1}} \exp\lb\i\s^{-1} \lb\sum_{j=1}^n X_j\lb t\hat\l_j+\langle\eta,\hat A_{\bullet j}\rangle\rb-\sum_{j=1}^n \lb t\hat\l_j+\langle\eta,\hat A_{\bullet j}\rangle\rb\rb\rb=\exp\lb\sum_{k=1}^\infty\frac{\i^k\k_k}{\s^kk!}\rb,
	\end{align}
	where now $\k_k=\k_k^\n,\,k=1,\dots$ are the cumulants of the random variable
	\begin{align}
		\sum_{j=1}^n X_j\lb t\hat\l_j+\langle\eta,\hat A_{\bullet j}\rangle\rb-\sum_{j=1}^n \lb t\hat\l_j+\langle\eta,\hat A_{\bullet j}\rangle\rb.
	\end{align}
Since the $X_j$ are i.i.d., and for $k\geq2$ the cumulant $\k_k$ is invariant under shifts, homogeneous of degree $k$ and the $k$th cumulant of an exponentially distributed random variable with mean 1 is $(k-1)!$, we have the mean $\k_1=0$ and for $k\geq 2$
\begin{align}
	\k_k=(k-1)!\sum_{j=1}^n\lb t\hat\l_j+\langle\eta,\hat A_{\bullet j}\rangle\rb^k.\label{cumulant}
\end{align}
Straightforward algebra shows that for the variance we get
\begin{align}
	&\k_2=\operatorname{Var}\lb \sum_{j=1}^n X_j\lb t\hat\l_j+\langle\eta,\hat A_{\bullet j}\rangle\rb-\sum_{j=1}^n \lb t\hat\l_j+\langle\eta,\hat A_{\bullet j}\rangle\rb\rb\\
	&=t^2\|\hat\l\|^2+\langle \eta,\hat A\hat A^t\eta\rangle+2t\langle \eta,\hat A\hat\l\rangle=t^2\|\hat\l\|^2+\|\eta\|^2+2t\langle \eta,\hat A\hat\l\rangle.
\end{align}
Note that by 
\begin{align}
	\lv \langle \eta,\hat A\hat\l\rangle\leq\|\eta\|\|\hat A\hat\l\|\leq\|\eta\|\|\hat\l\|,
\end{align} 
we in particular have $\sup_n\k_2<\infty$. As $\lvv t\hat\l_j+\langle\eta,\hat A_{\bullet j}\rangle\rvv \leq 1/2$, we find for $k\geq 3$
\begin{align}
    \lv\kappa_k\rv\leq (k-1)!\sum_{j=1}^n\lvv t\hat\l_j+\langle\eta,\hat A_{\bullet j}\rangle\rvv^k
    &\leq (k-1)!(1/2)^{k-3}\sup_j \lvv t\hat\l_j+\langle\eta,\hat A_{\bullet j}\rangle\rvv\sum_{j=1}^n\lvv t\hat\l_j+\langle\eta,\hat A_{\bullet j}\rangle\rvv^2\\
    &=(k-1)!(1/2)^{k-3}o(1),\label{reduction_cumulants}
\end{align}
where the $o(1)$ term is independent of $k$.
We conclude from \eqref{reduction_cumulants} that 
\begin{align}
  \sum_{k=3}^\infty\frac{\i^k\k_k}{\s^kk!}=o(1)
\end{align}
    as $n\to\infty$ and 
hence we obtain for any fixed $\eta$
\begin{align}
	&\E_{P_{n,\mb 1}} \exp\lb{\i \s^{-1}t \hat S_n+\i\s^{-1}\sum_{l=1}^m\eta_l\lb \hat A_{l\bullet}X-  \hat b_l\rb}\rb\\
	&=\exp\lb-\frac1{2\s^{-2}}\lb t^2\|\hat\l\|^2+\|\eta\|^2+2t\langle \eta,\hat A\hat\l\rangle\rb\rb+o(1)\label{exponential-bound}
\end{align}
as $n\to\infty$. To insert this into the $\eta$-integral in the numerator of \eqref{Bartlett2} (with the change of variables $\eta\to\s^{-1}\eta$), we will show that the l.h.s.~of \eqref{exponential-bound} has an integrable dominating function.
To this end, by Proposition \ref{prop:examplebis} (1) there are index sets $I_1,\dots,I_{m+1}\subset \{1,\dots,n\}$ pairwise disjoint and non-empty such that \eqref{det-condition} holds. As the $X_j$ are i.i.d.~exponential random variables with parameter 1, we have 
	\begin{align}
		&\lvv\E_{P_{n,\mb 1}} \exp\lb{\i\s^{-1} t \hat S_n+\i\s^{-1}\sum_{l=1}^m\eta_l\lb \hat A_{l\bullet}X-   \hat b_l\rb}\rb\rvv
		=\lb\prod_{j=1}^n \lb1+\s^{-2}\lb t\hat\l_j+\langle\eta,\hat A_{\bullet j}\rangle\rb^2\rb\rb^{-\frac12}\\
		&\leq \prod_{l'=1}^{m+1}\lb\prod_{j\in I_{l'}} \lb1+\s^{-2}\lb t\hat\l_j+\langle\eta,\hat A_{\bullet j}\rangle\rb^2\rb\rb^{-\frac12}.\label{bound_characteristic_function}
	\end{align}
Using the crude inequality
\begin{align}
	\prod_{j=1}^n(1+x_j)\geq1+\sum_{j=1}^n x_j,
\end{align}
valid for any $x_1,\dots,x_n\geq0$, on each of the products over elements in $I_{l'}$, we obtain 
\begin{align}
	&\lvv\E_{P_{n,\mb 1}} \exp\lb{\i\s^{-1} t	 \hat S_n+\i\s^{-1}\sum_{l=1}^m\eta_l\lb \hat A_{l\bullet}X-   \hat b_l\rb}\rb\rvv\leq  \prod_{l'=1}^{m+1} \lb 1+\s^{-2}\sum_{j\in I_{l'}}\lb t\hat\l_j+\langle\eta,\hat A_{\bullet j}\rangle\rb^2\rb^{-\frac12}\\
	&=\prod_{l'=1}^{m+1} \lb 1+\s^{-2}\left\langle\eta,\hat A_{I_{l'}}\hat A_{I_{l'}}^t\eta\right\rangle +\s^{-2}t^2\|\hat\l\|^2+\s^{-2}2t\left\langle \eta,\hat A_{I_{l'}}\hat\l\right\rangle\rb^{-\frac12}.\label{uniform_boundedness}
\end{align}
By \eqref{det-condition} we have $\det(\hat A_{I_{l'}}\hat A_{I_{l'}}^t)\geq C>0$ for some $C$ not depending on $n$. This is equivalent to the smallest eigenvalue of $\hat A_{I_{l'}}\hat A_{I_{l'}}^t$ being bounded below by some $n$-independent $c>0$. We thus obtain
\begin{align}
	\left\langle\eta,\hat A_{I_{l'}}\hat A_{I_{l'}}^t\eta\right\rangle\geq c\sum_{l=1}^m\eta_l^2
\end{align}
and consequently for $\|\eta\|$ large enough
\begin{align}
	&\lvv\E_{P_{n,\mb 1}} \exp\lb{\i\s^{-2} t	 \hat S_n+\i\s^{-2}\sum_{l=1}^m\eta_l\lb \hat A_{l\bullet}X-   \hat b_l\rb}\rb\rvv
	\leq  \prod_{l'=1}^{m+1} \lb 1+c\s^{-2}\|\eta\|^2-2\s^{-2}\lv t\rv\lvv\left\langle \eta,\hat A_{I_{l'}}\hat\l\right\rangle\rvv\rb^{-\frac12}\\
	&\leq\prod_{l'=1}^{m+1} \lb 1+c\s^{-2}\|\eta\|^2-2\s^{-2}\lv t\rv\|\eta\|\|\hat\l\|\rb^{-\frac12},
\end{align}
where we used $\|\hat A_{I_l',l\bullet}\|\leq \|\hat A_{l\bullet}\|=1$. Since $\|\hat\l\|$ is bounded in $n$, we have for some $C>0$ independent of $n$ and $\|\eta\|>C$
\begin{align}
	&\lvv\E_{P_{n,\mb 1}} \exp\lb{\i\s^{-1} t	 \hat S_n+\i\s^{-1}\sum_{l=1}^{m+1}\eta_l\lb \hat A_{l\bullet}X-   \hat b_l\rb}\rb\rvv
	\leq\prod_{l'=1}^{m+1} \lb 1+\frac{c}{2\s^2}\|\eta\|^2\rb^{-\frac12}=\lb 1+\frac{c}{2\s^2}\|\eta\|^2\rb^{-\frac {m+1}2}.
\end{align}
For $\|\eta\|\leq C$ we simply use the crude bound 
\begin{align}
	&\lvv\E_{P_{n,\mb 1}} \exp\lb{\i \s^{-1}t	 \hat S_n+\i\s^{-1}\sum_{l=1}^m\eta_l\lb \hat A_{l\bullet}X-   \hat b_l\rb}\rb\rvv
	\leq 1.
\end{align}
This provides an integrable function as 
\begin{align}
	\int_{\|\eta\|>C}\lb 1+\frac{c}{2\s^{2}}\|\eta\|^2\rb^{-\frac {m+1}2}d\eta\leq 2\pi^{m-1}\int_0^\infty r^{m-1}\lb 1+\frac{c}{2\s^2}r^2\rb^{-\frac {m+1}2}dr<\infty.\label{end_bound}
\end{align}
This proves, as $n\to\infty$,
\begin{align}
	&\E_{P_n}e^{\i \s^{-1}t S_n^*}=\frac{\int_{\R^m}\exp\lb-\frac1{2\s^2}\lb t^2\|\hat\l\|^2+\|\eta\|^2+2t\langle \eta,\hat A\hat\l\rangle\rb\rb d\eta}{\int_{\R^m}\exp\lb-\frac1{2\s^2} \|\eta\|^2\rb d\eta}+o(1)\\
	&=\exp\lb-\frac{t^2}{2\s^2}\lb\|\hat\l\|^2-\|\hat A\hat\l\|^2\rb\rb+o(1).\label{equ:result}
\end{align}
It remains to show that 
\begin{align}
    \|\hat\l\|^2-\|\hat A\hat\l\|^2=\|P_{\operatorname{ker(\hat A)}}\hat\l\|^2=\s^2,
\end{align}
where $P_{\operatorname{ker(\hat A)}}$ is the orthogonal projection to the kernel of $\hat A$. This however follows immediately from $\hat A\hat A^t=I$ as
\begin{align}
    \|\hat\l\|^2-\|\hat A\hat\l\|^2=\|(I-\hat A^t\hat A)\hat\l\|^2=\|(I-P_{\textup{Im}(\hat A^t)})\hat\l\|^2,
\end{align}
where $P_{\textup{Im}(\hat A^t)}$ is the orthogonal projection to the image of $\hat A^t$. This finishes the proof for $S_n^*$. \\

To deduce the result for $S_n$ from the one for $S_n^*$, it suffices to show that 
\begin{align}\label{equ:toshow}
    \E_{P_n}\s^{-1}\sum_{j=1}^n\l_jX_j-\E_{P_{n,w}}\s^{-1}\sum_{j=1}^n\l_jX_j=\E_{P_n}\s^{-1}S_n^*=o(1)
\end{align}
as $n\to\infty$.
From
\begin{align}
	&\E_{P_n} \s^{-1}S_n^*
	=-\i\frac{d}{dt}\E_{P_n} e^{\i \s^{-1}tS_n^*}|_{t=0}
\end{align}
 and \eqref{equ:result}
we see that \eqref{equ:toshow} will follow if we can interchange taking the limit $n\to\infty$ and differentiating at $t=0$. To justify this we will show that $t\mapsto\E_{P_n} e^{\i \s^{-1}tS_n^*}$, which is an entire function thanks to the boundedness of $S_n^*$, converges uniformly in a complex neighborhood of $t=0$ to the r.h.s.~of \eqref{equ:result} as $n\to\infty$. We first show that the representation \eqref{Bartlett2'} continues to hold for $t\in U$, where $U\subset \C$ is an open neighborhood of $0$ to be chosen small enough. For this it suffices to show analyticity of the r.h.s.~of \eqref{Bartlett2'}, which can be seen using Morera's theorem and Fubini's theorem. We also  remark that thanks to the $X_j$ being i.i.d.~exponential, the moment generating function of $S_n^*$ exists and hence, using that $\s$ is bounded away from 0 and assuming without loss of generality that all $\lv\hat\l_j\rv\leq1$, we get for some $C>0$ independent of $n$ and all $n$ large enough
\begin{align}\label{equ:bound_analytic}
    \lvv \E_{P_{n,\mb 1}}e^{\i\s^{-1} t \hat S_n+\i\s^{-1}\sum_{l=1}^m\eta_l\lb \hat A_{l\bullet}X-   \hat b_l\rb}\rvv\leq \E_{P_{n,\mb 1}}e^{-\Im(t)\s^{-1} t \hat S_n}\leq  C,
\end{align}
uniformly for $t\in U$ chosen small enough. We conclude that the r.h.s.~of \eqref{Bartlett2'} is analytic for $\lv\Im(t)\rv$ small enough and as the l.h.s.~is entire and both sides coincide for $t\in\R$, by the identity theorem representation \eqref{Bartlett2'} extends to complex $t$ for $\lv\Im(t)\rv$ small enough. Now that we know that both sides of \eqref{Bartlett2'} are analytic in $t$ around 0, we will use Vitali's convergence theorem, a consequence of Montel's theorem, to deduce convergence of the l.h.s. of \eqref{Bartlett2'} to the r.h.s.~of \eqref{equ:result}, uniformly in $t$ from a small complex neighborhood of $t$. More precisely, Vitali's Convergence Theorem \cite{Titchmarsh} states that if a sequence of analytic functions is uniformly bounded on a complex domain and converges pointwise on a subset of that domain that has a limit point, then in fact the convergence is uniform on the whole domain. In our application the subset of the domain is the intersection of the complex domain with $\R$, and convergence on it is provided by \eqref{equ:result}. To prove the required uniform boundedness we follow the steps from \eqref{bound_characteristic_function} to \eqref{end_bound}. The starting equation in \eqref{bound_characteristic_function} has now for complex $t$ to be replaced by 
	\begin{align}
		&\lvv\E_{P_{n,\mb 1}} \exp\lb{\i\s^{-1} t \hat S_n+\i\s^{-1}\sum_{l=1}^m\eta_l\lb \hat A_{l\bullet}X-   \hat b_l\rb}\rb\rvv\\
		&=\lb\prod_{j=1}^n \lb1-2\s^{-1}\Im(t)\hat\l_j+\s^{-2}\lvv t\hat\l_j+\langle\eta,\hat A_{\bullet j}\rangle\rvv^2\rb\rb^{-\frac12},
	\end{align}
but as $t$ is bounded, the decay in $\eta$ is the same as in \eqref{bound_characteristic_function} and the analog of \eqref{end_bound} provides a bound uniform in $n$ and $t$. 
This allows to end the proof.
\end{proof}

\begin{proof}[Proof of Theorem \ref{thrm_marginal}]
This proof follows the strategy of the proof of Theorem \ref{thrm1}. We will look at the joint characteristic function $\E_{P_n}\exp(\i\sum_{i=1}^k t_{j_i}w_{j_i}X_{j_i})$,
where $t_{j_i}\in\R,\ i=1,\dots,k$ are arbitrary.
In analogy to \eqref{Bartlett2} we get
\begin{align}
	&\E_{P_n}e^{\i \sum_{i=1}^k t_{j_i}w_{j_i}X_{j_i}}
	=\frac{\int_{\R^m}\E_{P_{n,\mb 1}} \exp\lb{\i  \sum_{i=1}^k t_{j_i}X_{j_i}+\i\sum_{l=1}^m\eta_l\lb \hat A_{l\bullet}X-   \hat b_l\rb}\rb d\eta}{\int_{\R^m}\E_{P_{n,\mb 1}}\exp\lb{\i\sum_{l=1}^m\eta_l\lb \hat A_{l\bullet}X-   \hat b_l\rb}\rb d\eta}\label{Bartlett3}.
	\end{align}
	We now have with \eqref{identity_b}
	\begin{align}
	&\E_{P_{n,\mb 1}} \exp\lb{\i \sum_{i=1}^k t_{j_i}X_{j_i}+\i\sum_{l=1}^m\eta_l\lb \hat A_{l\bullet}X-   \hat b_l\rb}\rb\\
	&=\E_{P_{n,\mb 1}} \exp\lb\i \sum_{i=1}^k (t_{j_i}+\langle\eta,\hat A_{\bullet j_i}\rangle)X_{j_i}\rb\cdot\E_{P_{n,\mb 1}} \exp\lb\i \sum_{j\not=j_i,i=1,\dots,k} \langle\eta,\hat A_{\bullet j}\rangle)(X_j-1)\rb.
	\end{align}
As $\|\hat A\|_{\max}=o(1)$, we conclude that by Slutsky's lemma the first factor converges as $n\to\infty$ to the joint characteristic function of $k$ i.i.d.~exponential random variables with parameter 1. We may now use the same steps as in the proof of Theorem \ref{thrm1} to justify interchanging limit and integration in the numerator, as well as showing that
\begin{align}
    \lim_{n\to\infty}\frac{\int_{\R^m}\E_{P_{n,\mb 1}} \exp\lb{\i  \sum_{j\not=j_i,i=1,\dots,k} \langle\eta,\hat A_{\bullet j}\rangle)(X_j-1)}\rb d\eta}{\int_{\R^m}\E_{P_{n,\mb 1}}\exp\lb\i\sum_{j=1}^n \langle\eta,\hat A_{\bullet j}\rangle)(X_j-1)\rb d\eta}
    =\frac{\int_{\R^m}\exp\lb-\|\eta\|^2\rb d\eta}{\int_{\R^m}\exp\lb-\|\eta\|^2\rb d\eta}=1.
\end{align}
We note that here we use Property A in the form of \eqref{det-condition} with $K=m+1+k$ for the numerator, instead of $K=m+1$ used in the proof of Theorem \ref{thrm1}.
\end{proof}

{\bf Acknowledgments} Our warmest thanks to Franck Barthe for his bibliographical insights into the Brascamp-Lieb inequality. Support from the ANR-3IA Artificial and Natural Intelligence Toulouse Institute is gratefully acknowledged. MV is grateful for partial support from the Irish Research Council through a Ulysses Award. 
\section*{Appendix: A simple Python program to determine the probabilistic barycenter}
Here we present a short Python code to demonstrate the practical possibility of calculating the entropy center of the polytope $K_n$. To do this, we implement the dual optimization problem \eqref{Edual}.
The target function is named \textmtt{dualentrop} in the code. In the current example, we are working in $\R^{30}$ with $2$ constraints. On the one hand, the first constraint requires the vector to be a probability measure on $\{1,\cdots,30\}$. On the other hand, the second constraint is the hyperplane
$${x\in\R^{30} : x_1+\cdots +x_5+2x_6+\cdots+ 2x_{10}=0.8}.$$
The output is the entropy center whose components are block-wise constant. Up to the numerical precision, it 
almost satisfies the $2$ constraints as expected.

\includegraphics[width=15cm]{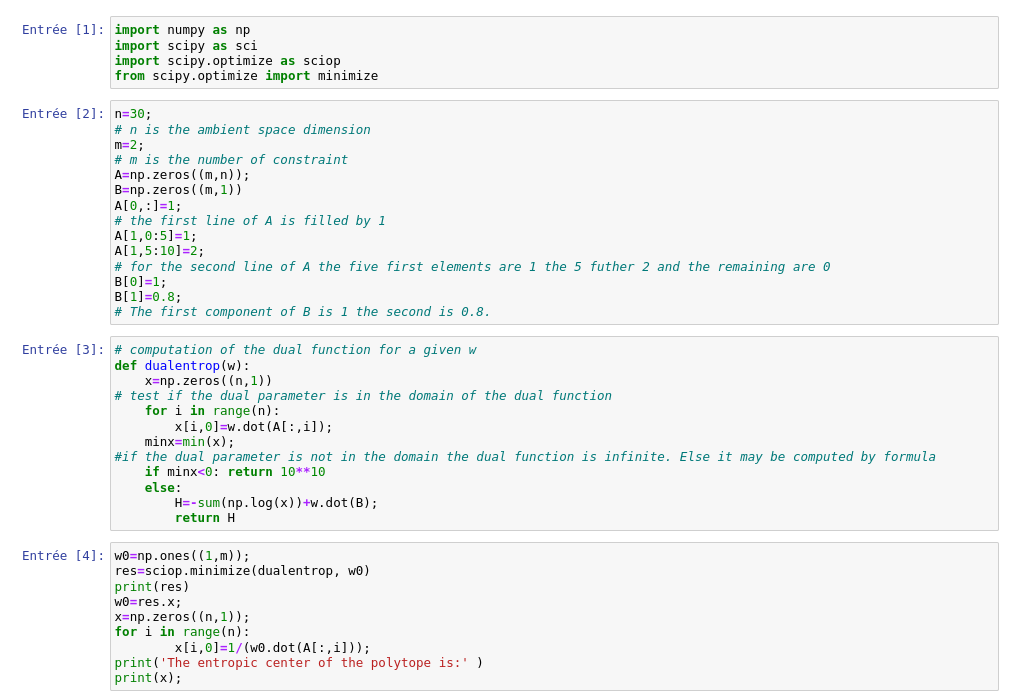}
\newpage
\includegraphics[width=15cm]{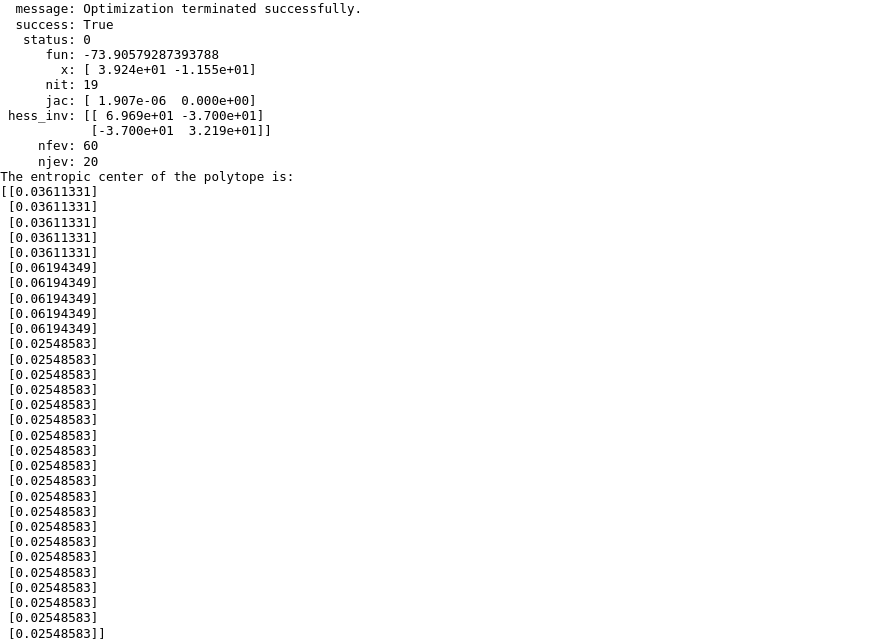}
\printbibliography
\end{document}